\newtheorem{theorem}{Theorem}[section]
\newtheorem{lemma}[theorem]{Lemma}
\newtheorem{proposition}[theorem]{Proposition}
\newtheorem{corollary}[theorem]{Corollary}
\newtheorem{example}[theorem]{Example}
\newtheorem{procedure}[theorem]{Procedure}
\newtheorem{definition}[theorem]{Definition}
\newtheorem{conjecture}[theorem]{Conjecture}
\newtheorem{observation}[theorem]{Observation}
\newtheorem{claim}[theorem]{Claim}
\newcommand{\ep}{\varepsilon}
\newcommand{\floor}[1]{\left\lfloor#1\right\rfloor}
\newcommand{\ceiling}[1]{\left\lceil#1\right\rceil}
\providecommand{\N}{\mathbb{N}}
\providecommand{\ec}{\alpha}
\DeclareMathOperator{\E}{\mathbb{E}}
\DeclareMathOperator{\im}{im}
\providecommand{\msd}{\delta^0}
\providecommand{\pairs}{\mathcal{P}}
\providecommand{\cpairs}{\mathcal{R}}
\newcommand{\rdeg}[2]{\vec{e}(#1, #2)}
\newcommand{\outn}[1]{N^+(#1)}
\title{\vspace{-.5in}
Semi-degree threshold for anti-directed Hamiltonian cycles}
\author{
Louis DeBiasio\thanks{Department of Mathematics, Miami University, Oxford, OH 45056 USA. E-mail address: debiasld@miamioh.edu.} 
\quad and \quad 
Theodore Molla\thanks{School of Mathematical Sciences and Statistics, Arizona State University, Tempe, AZ 85287, USA. E-mail address: tmolla@asu.edu. Research of this author is supported in part by NSA grant H98230-12-1-0212.}
}
\date{\today}
\begin{document}
\maketitle

\begin{abstract}
In 1960 Ghouila-Houri extended Dirac's theorem to directed graphs by proving that if $D$ is a directed graph on $n$ vertices with minimum out-degree and in-degree at least $n/2$, then $D$ contains a directed Hamiltonian cycle.  For directed graphs one may ask for other orientations of a Hamiltonian cycle and in 1980 Grant initiated the problem of determining minimum degree conditions for a directed graph $D$ to contain an anti-directed Hamiltonian cycle (an orientation in which consecutive edges alternate direction).  We prove that for sufficiently large even $n$, if $D$ is a directed graph on $n$ vertices with minimum out-degree and in-degree at least $\frac{n}{2}+1$, then $D$ contains an anti-directed Hamiltonian cycle.  In fact, we prove the stronger result that $\frac{n}{2}$ is sufficient unless $D$ is one of two counterexamples.  This result is sharp.
\end{abstract}

\section{Introduction}

A directed graph $D$ is a pair $(V(D), E(D))$ where $E(D)\subseteq V(D)\times V(D)$.  In this paper we will only consider loopless directed graphs, i.e. directed graphs with no edges of the type $(v,v)$.  An \emph{anti-directed cycle (path)} is a directed graph in which the underlying graph forms a cycle (path) and no pair of consecutive edges forms a directed path.  Note that an anti-directed cycle must have an even number of vertices.  Let ADP, ADC stand for anti-directed path and anti-directed cycle respectively and let ADHP, ADHC stand for anti-directed Hamiltonian path and anti-directed Hamiltonian cycle respectively.  
Call an ADP $P = v_1 \dotsc v_d$ \emph{proper} if $d$ is even
and $(v_1, v_2) \in E(P)$ and hence, $(v_{d-1}, v_d) \in E(P)$.
Given an (undirected) graph $G$, let $\delta(G)$ be the minimum degree of $G$.  If $D$ is a directed graph, then $\delta(D)$ will denote the minimum degree of the underlying multigraph, i.e. the minimum total degree of $D$.  For a directed graph $D$, let $\delta^+(D)$ and $\delta^-(D)$ be the minimum out-degree and minimum in-degree respectively.  Finally, let $\delta^0(D)=\min\{\delta^+(D), \delta^-(D)\}$ and call this quantity the minimum semi-degree of $G$.

In 1952, Dirac \cite{Dir} proved that if $G$ is a graph on $n\geq 3$ vertices with $\delta(G)\geq n/2$, then $G$ contains a Hamiltonian cycle.  In 1960, Ghouila-Houri  extended Dirac's theorem to directed graphs.
\begin{theorem}[Ghouila-Houri \cite{GH}]
Let $D$ be a directed graph on $n$ vertices.  If $\delta^0(D)\geq n/2$, then $D$ contains a directed Hamiltonian cycle.
\end{theorem}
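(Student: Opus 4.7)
My plan follows the standard extremal approach via a longest directed cycle. First I would verify that $D$ is strongly connected: if $V(D) = A \sqcup B$ is a nontrivial partition with no arc from $A$ to $B$, then every vertex of $A$ has its entire out-neighborhood contained in $A$, forcing $|A| \geq \delta^+(D) + 1 \geq n/2 + 1$, and symmetrically $|B| \geq n/2 + 1$, contradicting $|A| + |B| = n$.

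Next, let $C = v_0 v_1 \cdots v_{k-1} v_0$ be a longest directed cycle in $D$ and assume for contradiction that $k < n$. For each $u \in V(D) \setminus V(C)$ define
\[
A_u := \{ i \in \mathbb{Z}/k\mathbb{Z} : (v_i, u) \in E(D) \}, \qquad B_u := \{ i : (u, v_i) \in E(D) \}.
\]
If some index satisfied $i - 1 \in A_u$ and $i \in B_u$, then inserting $u$ between $v_{i-1}$ and $v_i$ would yield a cycle of length $k + 1$, contradicting maximality. Hence $A_u + 1$ and $B_u$ are disjoint in $\mathbb{Z}/k\mathbb{Z}$, giving $|A_u| + |B_u| \leq k$. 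The semi-degree hypothesis together with $|V(D) \setminus (V(C) \cup \{u\})| \leq n - k - 1$ yields $|A_u|, |B_u| \geq k - n/2 + 1$, so $2k - n + 2 \leq k$, i.e.\ $k \leq n - 2$. In particular $k = n - 1$ is already ruled out.

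For the remaining range I would invoke strong connectivity to extract a directed path $P = v_i\, u_1 u_2 \cdots u_t\, v_j$ with $t \geq 1$, all $u_s \in V(D) \setminus V(C)$, chosen to have minimum internal length $t$. Replacing the segment of $C$ from $v_i$ to $v_j$ by $P$ produces a cycle of length $k + (t + 1) - (j - i)$ (with $j - i$ taken cyclically), which exceeds $k$ whenever $j - i \leq t$. Applying this insertion obstruction to every admissible pair of endpoints gives the ``$t$-shift'' disjointness
\[
B_{u_t} \cap \bigl( A_{u_1} + \{1, 2, \ldots, t\} \bigr) = \emptyset \text{ in } \mathbb{Z}/k\mathbb{Z},
\]
where $A_{u_1}, B_{u_t}$ record the cyclic positions of $N^-(u_1) \cap V(C)$ and $N^+(u_t) \cap V(C)$. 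I would then combine this disjointness with degree lower bounds on $|A_{u_1}|$ and $|B_{u_t}|$ (using the minimality of $t$ to cap how many in/out-neighbors of $u_1, u_t$ can lie off $V(C) \cup V(P)$) and with a lower bound on the Minkowski sum $|A_{u_1} + \{1, \ldots, t\}|$, to force a contradiction.

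The extremal setup is routine, but the multi-vertex detour step is the crux of the proof. The Minkowski sum $A_{u_1} + \{1, \ldots, t\}$ can be only marginally larger than $A_{u_1}$ if the latter forms a single arc on $\mathbb{Z}/k\mathbb{Z}$, so I must exploit the minimality of $t$ (and possibly reroute $P$ via alternative detour vertices) to rule out such degenerate cyclic configurations. I expect this coupled shift-pigeonhole argument at the two distinct off-cycle endpoints of $P$, combined with careful control of edges inside $V(D) \setminus V(C)$, to be the technical heart of the proof.
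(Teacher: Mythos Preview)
The paper does not prove Ghouila-Houri's theorem; it is quoted as background (with a citation to \cite{GH}) and then the paper moves on to anti-directed Hamiltonian cycles. So there is no proof in the paper to compare your proposal against.

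Evaluating your proposal on its own merits: the opening moves are standard and correct (strong connectivity, the single-vertex insertion bound $|A_u|+|B_u|\le k$, and the consequence $k\le n-2$). The difficulty, as you yourself flag, is entirely in the multi-vertex detour step, and what you have written there is a plan rather than an argument. The disjointness $B_{u_t}\cap(A_{u_1}+\{1,\dots,t\})=\emptyset$ is fine, but to extract a contradiction you need a lower bound on $|A_{u_1}+\{1,\dots,t\}|$ that beats $|A_{u_1}|$ by roughly $t$, and in $\mathbb{Z}/k\mathbb{Z}$ no such bound holds in general (Cauchy--Davenport requires prime modulus; for composite $k$ the sumset can collapse onto a coset). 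Your suggestion to ``exploit the minimality of $t$'' to rule out the case where $A_{u_1}$ is a single cyclic arc is not fleshed out, and it is not clear that minimality of $t$ gives any structural control over the \emph{positions} of $N^-(u_1)\cap V(C)$ on the cycle. Known proofs of Ghouila-Houri (e.g.\ via Bondy--Thomassen style multi-insertion, or Nash-Williams' argument) handle this step with a genuinely different mechanism: rather than a sumset bound, they iterate single-vertex insertions along a carefully chosen path, or work with a maximal path and use a Pósa-type rotation adapted to orientations. As written, your final paragraph is a hope, not a proof, and the gap is exactly at the point where the real work of the theorem lies.
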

\noindent
(His original statement actually says that $\delta(D)\geq n$ is sufficient if $D$ is strongly connected.)

In 1973, Thomassen proved that every tournament on $2n\geq 50$ vertices contains an ADHC  \cite{Thom}. Since the total degree of every vertex in a tournament on $2n$ vertices is $2n-1$, Grant wondered if all digraphs on $2n$ vertices with total degree $2n-1$ have an ADHC.  So in 1980, Grant made the weaker conjecture (replacing total degree by semi-degree) that if $D$ is a directed graph on $2n$ vertices with $\delta^0(D)\geq n$, then $D$ contains an ADHC \cite{G}.  However, in 1983, Cai \cite{C} gave a counterexample to Grant's conjecture (see Figure \ref{CaiFig1}).

\begin{example}[Cai 1983]\label{Cai:example}
For all $n$, there exists a directed graph $D$ on $2n$ vertices with $\delta^0(D)= n$ such that $D$ does not contain an ADHC.
\end{example}


We define for each even $n$, a family of digraphs with minimum semi-degree $n/2-1$ which have no anti-directed cycle on $n$ vertices.  From this family, we define two digraphs with minimum semi-degree $n/2$ which have no anti-directed cycle on $n$ vertices (see Figure \ref{ExFigs}).

\begin{definition}\label{CaiF}
Let $n\geq 2$ be even and let $0\leq k\leq \frac{n}{2}$.  Let $F_{n,k}$ be a digraph on $n$ vertices where $\{X_1, X_2, Y_1, Y_2\}$ is a partition of the vertex set with $|X_1|=|X_2|=\frac{n}{2}-k$ and $|Y_1|=|Y_2|=k$ and where $(u,v)$ is an edge if and only if $u\neq v$ and
\begin{enumerate}
\item $u\in Y_i$ and $v\in Y_i\cup X_{i}$ for $i\in [2]$ or

\item $u\in X_i$ and $v\in Y_{3-i}\cup X_{3-i}$ for $i\in [2]$.
\end{enumerate}

Let $F^1_{n}$ be the digraph obtained from $F_{n,1}$ by adding the edges $(y_1, y_2)$ and $(y_2, y_1)$, where $y_i$ is the unique vertex in $Y_i$.

Let $F^2_{n}$ be the digraph obtained from $F_{n,1}$ by adding the edges $(y_1, y_2)$, $(y_2, x)$, and $(x, y_1)$, where $y_i$ is the unique vertex in $Y_i$ and $x$ is an arbitrary vertex in $X_1$.
\end{definition}

\begin{figure}[ht]
\centering
\subfloat[$F_{2n,k}$]{
\scalebox{.7}{\input{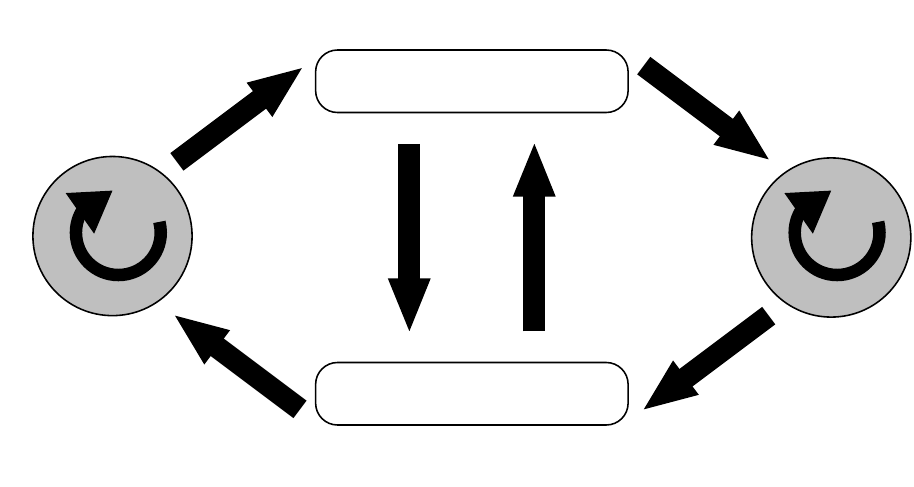_t}}
\label{GenExtremal}
}~~~~~\\
\subfloat[$F^1_{2n}$]{
\scalebox{.7}{\input{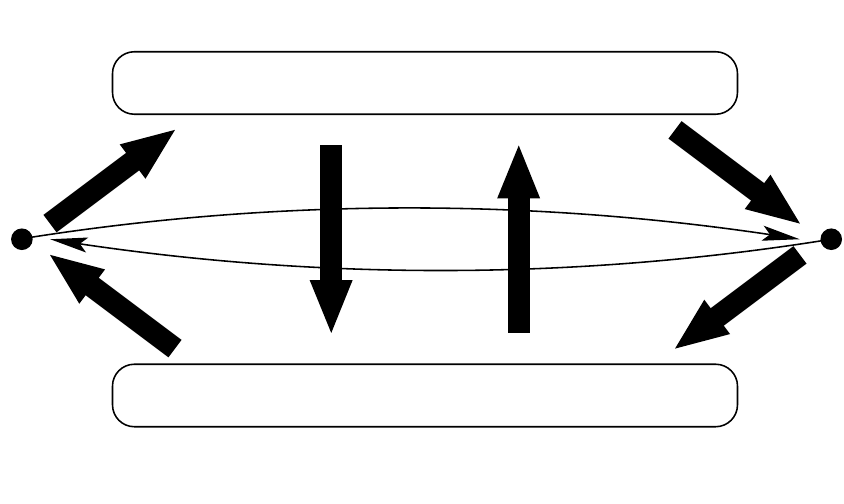_t}}
\label{CaiFig1}
}~~~~~~~~~~~~~~~~~~~
\subfloat[$F^2_{2n}$]{
\scalebox{.7}{\input{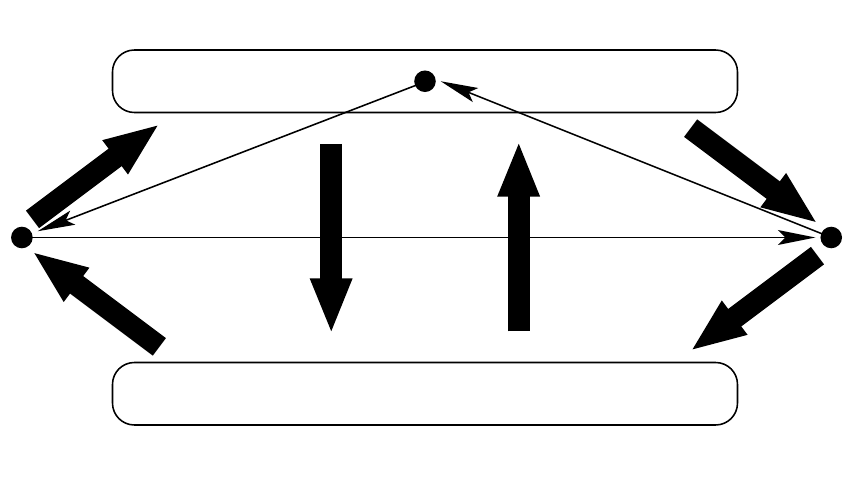_t}}
\label{CaiFig2}
}
\caption[]{The solid arrows indicate all possible edges in the designated direction.  The shaded sets with the curved arrows indicate all possible directed edges.}\label{ExFigs}
\end{figure}

One can easily check that $F_{2n}^1$ and $F_{2n}^2$ have no ADHC and are edge maximal with respect to this property.  Cai's example ($F_{2n}^1$ above) and our modification of his example ($F_{2n}^2$ above) shows that the semi-degree threshold for an ADHC in a directed graph on $2n$ vertices is at least $n+1$.  There have been a sequence of partial results which have improved the threshold from the upper end.  In 1980, Grant proved that if $D$ is a directed graph on $2n$ vertices and $\delta^0(D)\geq \frac{4}{3}n+2\sqrt{n\log{n}}$, then $D$ has an ADHC \cite {G}.  In 1995, H\"aggkvist and Thomason proved the very general result that if $D$ is a directed graph on $n$ vertices then the semi-degree threshold for all orientations of a cycle on $n$ vertices is asymptotically $n/2$ (we conjecture an exact bound in Section \ref{conclusion}).

\begin{theorem}[H\"aggkvist, Thomason \cite{HT}]
For sufficiently large $n$, if $D$ is a directed graph on $n$ vertices and $\delta^0(D)\geq \frac{n}{2}+n^{5/6}$, then $D$ contains every orientation of a cycle on $n$ vertices.
\end{theorem}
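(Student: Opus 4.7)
The plan is to combine the directed Szemer\'edi Regularity Lemma with the absorbing method. This strategy yields the semi-degree threshold $\delta^0(D) \ge n/2 + o(n)$, which is weaker than the stated $n^{5/6}$ error term but follows the same conceptual path; sharpening the error to $n^{5/6}$ requires more delicate probabilistic counting in place of the (rather inefficient) regularity partitioning.

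First, I would apply a directed version of the Regularity Lemma to $D$ to obtain a partition $V_0, V_1, \ldots, V_k$ of $V(D)$ with $|V_0| < \ep n$ and $|V_i| = m$ for $i \ge 1$, such that almost every ordered pair $(V_i, V_j)$ is $\ep$-regular in both directions. Form the reduced digraph $R$ on $\{V_1, \ldots, V_k\}$ by placing $V_i \to V_j$ whenever the forward density between these clusters exceeds a small threshold $d$ and the pair is $\ep$-regular in both directions. The semi-degree of $D$ transfers to $R$: one checks that $\delta^0(R) \ge (1/2 + o(1))k$, so by Ghouila-Houri's theorem $R$ contains a directed Hamiltonian cycle $V_{i_1} V_{i_2} \cdots V_{i_k} V_{i_1}$.

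Given the target orientation $C$ of a cycle on $n$ vertices, I would partition $C$ into consecutive segments of length roughly $m$, embed the $j$th segment inside $V_{i_j}$, and use the regular pair $(V_{i_j}, V_{i_{j+1}})$ to supply the bridging edge between consecutive segments in the direction dictated by $C$. Before this global embedding I would set aside a small absorbing path $A$, built by a random-greedy construction that relies on the semi-degree hypothesis, capable of absorbing any small leftover set; the vertices of $V_0$ together with any unembedded remainders would then be absorbed into $A$ at the end to close the cycle.

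The main obstacle is embedding arbitrary path orientations into $\ep$-regular directed pairs. Inside a single pair $(V_i, V_{i+1})$ the forward and backward edges may have quite different densities, and an arbitrary orientation can contain long runs of forward edges followed by long runs of backward edges; the embedding must respect these local patterns rather than only the overall density. I expect to need either an oriented variant of the Blow-Up Lemma or a direct rotation-extension argument in which the $n^{5/6}$ slack is precisely what keeps the relevant bad-event probabilities strictly below $1$. The anti-directed case handled later in the present paper is in many ways the cleanest instance of this general problem and would serve as a natural template.
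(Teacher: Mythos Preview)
This theorem is not proved in the present paper at all: it is quoted from H\"aggkvist and Thomason \cite{HT} as background, with no argument supplied. So there is no ``paper's own proof'' to compare your proposal against.

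On the proposal itself, there is a genuine structural gap. You write that you will ``embed the $j$th segment inside $V_{i_j}$'' and use the regular pair $(V_{i_j}, V_{i_{j+1}})$ only for the single bridging edge. But clusters in a regularity partition carry no useful internal edge structure; an oriented path of length roughly $m$ cannot be placed inside a single cluster $V_{i_j}$. The path has to be threaded \emph{between} clusters, using the inter-cluster edges throughout. Once you try to do that, a second problem appears: the directed Hamiltonian cycle in $R$ supplied by Ghouila-Houri only guarantees dense edges in the forward direction $V_{i_j}\to V_{i_{j+1}}$, whereas an arbitrary orientation of $C$ will demand backward edges $V_{i_{j+1}}\to V_{i_j}$ at many positions, and your reduced-digraph setup gives you no control over those. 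You would need, at minimum, a spanning structure in $R$ along which both directions are dense.

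For context, the actual argument in \cite{HT} predates both the absorbing method and the directed regularity/blow-up machinery; it is a direct probabilistic rotation--extension argument, and the $n^{5/6}$ term comes out of explicit counting there rather than from regularity. What you sketch in your final paragraph is closer in spirit to that than the regularity scheme you lead with.
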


Then in 2008, Plantholt and Tipnis improved upon Grant's result by showing that if $D$ is a directed graph on $2n$ vertices and $\delta^0(D)\geq \frac{4}{3}n$, then $D$ has an ADHC \cite{PT} (note that this is for all $n$).  Finally in 2011, Busch, Jacobson, Morris, Plantholt, Tipnis improved upon all the results for ADHC's by showing that if $D$ is a directed graph on $2n$ vertices and $\delta^0(D)\geq n+\frac{14}{3}\sqrt{n}$, then $D$ has an ADHC \cite{BJMPT}.

The main goal of this paper is to determine, for sufficiently large $n$, the exact semi-degree threshold for an ADHC. However, we actually prove something stronger which in effect shows that there are only two counterexamples to Grant's conjecture. 

\begin{theorem}\label{main}
Let $D$ be a directed graph on $2n$ vertices.  If $n$ is sufficiently large and $\delta^0(D)\geq n$, then $D$ contains an anti-directed Hamiltonian cycle unless $D$ is isomorphic to $F_{2n}^1$ or $F_{2n}^2$.
\end{theorem}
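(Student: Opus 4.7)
The plan is to split the proof into a non-extremal and an extremal case based on how close $D$ is to the family $\{F_{2n,k}\}$. The driving observation is that $D$ contains an ADHC if and only if there is a balanced bipartition $V(D) = A \sqcup B$ (the ``sources'' and ``sinks'' of the cycle) for which the auxiliary bipartite graph $H_{A,B}$, defined by $ab \in E(H_{A,B}) \iff (a,b) \in E(D)$ with $a \in A$ and $b \in B$, admits a Hamilton cycle. Fix constants $\eta \ll \eps \ll 1$ and call $D$ \emph{$\eps$-extremal} if there is a balanced partition $V(D) = L_1 \sqcup L_2$ with $e_D(L_1) + e_D(L_2) \leq \eps n^2$, i.e.\ at most $\eps n^2$ edges fail to cross the partition. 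Every $F_{2n,k}$ with $k \ll n$ is $\eps$-extremal by Definition~\ref{CaiF}, so this is the natural dividing line.

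In the non-extremal case I would employ the absorbing method. First, construct a proper \emph{absorbing} ADP $P_{\text{abs}}$ on $O(\eta n)$ vertices with the property that for any sufficiently small vertex set $S \subseteq V(D) \setminus V(P_{\text{abs}})$ of the appropriate parity, $D[V(P_{\text{abs}}) \cup S]$ contains a proper ADP with the same two endpoints as $P_{\text{abs}}$ spanning $V(P_{\text{abs}}) \cup S$. The building blocks are short per-vertex ``absorbers''; non-extremality guarantees an abundance of them for every $v$, so a standard random-selection argument produces $P_{\text{abs}}$. Next, fix a balanced bipartition $(A,B)$ of $V(D) \setminus V(P_{\text{abs}})$ for which $H_{A,B}$ has minimum degree $(1/2 + \Omega(1))n$; such a partition exists because $D$ is non-extremal (for instance, a near-balanced random partition concentrates around $n/2$, and non-extremality rules out persistent imbalances). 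Then apply a Moon--Moser- or regularity-style argument in $H_{A,B}$ to find a long proper ADP $Q$ covering all but $\le \eta n$ vertices of $V \setminus V(P_{\text{abs}})$, with endpoints compatible with those of $P_{\text{abs}}$; finally splice and use $P_{\text{abs}}$ to swallow the leftover to close the ADHC.

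In the extremal case, fix a bipartition $(L_1, L_2)$ witnessing $\eps$-extremality and set $X_i := \{v \in L_i : v$ sends almost all its edges across$\}$ and $Y_i := L_i \setminus X_i$. The hypothesis $\delta^0(D) \geq n$ forces $|Y_1|+|Y_2| = O(\sqrt{\eps}\,n)$ and makes the structure on the $X_i$ mirror the skeleton of $F_{2n,k}$ in Definition~\ref{CaiF}. From here I would carry out a careful case analysis: exploit the almost-complete bipartite digraph on $X_1 \cup X_2$ to build long anti-directed paths, then try to incorporate $Y_1 \cup Y_2$ using any non-skeleton edges of $D$ -- each such edge typically allows a direct ``splicing'' construction of an ADHC. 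Either the analysis produces an ADHC, or $\delta^0(D) \geq n$ together with edge-maximality pins down $D$ as $F_{2n}^1$ or $F_{2n}^2$. The main obstacle I anticipate is exactly this extremal case: since $F_{2n}^1$ and $F_{2n}^2$ differ from $F_{2n,1}$ by only a handful of edges, the stability argument must be very sharp, and the parity rigidity of proper ADPs (every proper ADP has even length and a fixed orientation pattern at its endpoints) makes the rotation/insertion tricks more delicate than in the undirected Hamilton-cycle setting.
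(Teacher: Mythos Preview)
Your overall stability framework matches the paper: split into extremal and non-extremal cases, and in the non-extremal case build an absorbing ADP and then cover almost everything with a long ADP. However, the specific choices you make in both halves of this split leave a genuine gap.

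Your notion of $\eps$-extremal (a balanced bipartition $L_1\sqcup L_2$ with $e_D(L_1)+e_D(L_2)\le\eps n^2$) only captures $F_{2n,k}$ when $k=o(n)$. For $k$ comparable to $n$---say $k=n/2$, so $|X_i|=|Y_i|=n/2$---every balanced bipartition of $F_{2n,k}$ has $\Omega(n^2)$ internal edges, so digraphs close to it land in your \emph{non}-extremal case. The paper's definition is different and asymmetric: $D$ is $\alpha$-extremal if there exist (not necessarily disjoint) $A,B$ of size about $n$ with $\Delta^+(A,B),\Delta^-(B,A)\le\alpha n$. Taking $A=Y_1\cup X_2$, $B=Y_2\cup X_2$ shows this captures every $F_{2n,k}$, and, more importantly, the complementary non-extremal condition says precisely that $\vec{e}(A,B)=\Omega(n^2)$ for every pair of sets of size about $n$ (Observation~\ref{edgeobs}), which is what drives both the absorber count and the covering argument.

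Your non-extremal covering step then fails on the digraphs you have let through. You assert that non-extremality yields a balanced $(A,B)$ with $\delta(H_{A,B})\ge(\tfrac12+\Omega(1))n$, but a random bipartition only gives $\delta(H_{A,B})\approx n/2$, and your hypothesis (many internal edges in every bipartition) provides no boost. Concretely, take $D$ within $O(n)$ edges of $F_{2n,n/2}$ with $\delta^0(D)\ge n$; this $D$ is non-extremal in your sense. For any balanced $(A,B)$ with $A$ meeting both $X_1$ and $Y_1$, note that $N^+(X_1)=X_2\cup Y_2$ and $N^+(Y_1)=X_1\cup Y_1$ partition $V$, so the $H_{A,B}$-degrees of a vertex in $X_1\cap A$ and one in $Y_1\cap A$ sum to at most $|B|+O(1)=n+O(1)$; one of them is at most $n/2+O(1)$. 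Your reduction to a Hamilton cycle in $H_{A,B}$ is exactly the paper's Observation~\ref{asymptotic}, which works only under the stronger hypothesis $\delta^0\ge(1+\eps)n$. The paper's covering argument (Claim~\ref{clm:covering}) instead works directly in $D$: it chops a maximal proper ADP into blocks of length $O(\log n)$ and uses non-extremality in the paper's sense to find a block that can be traded for a longer ADP through the leftover vertices. The same non-extremality is also what guarantees $\Omega(n^4)$ $(x,y)$-absorbers for every pair, via $\vec{e}(N^-(x),N^+(y))=\Omega(n^2)$; your bipartition-based definition does not obviously yield this either.
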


Since $\delta^0(F_{2n}^1)=\delta^0(F^2_{2n})=n$, we obtain the following corollary.

\begin{corollary}\label{cor:main}
Let $D$ be a directed graph on $2n$ vertices.  If $n$ is sufficiently large and $\delta^0(D)\geq n+1$, then $D$ contains an anti-directed Hamiltonian cycle.
\end{corollary}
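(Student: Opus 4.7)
The plan is to deduce Corollary~\ref{cor:main} directly from Theorem~\ref{main}. Since Theorem~\ref{main} tells us that any directed graph $D$ on $2n$ vertices (for $n$ sufficiently large) with $\delta^0(D)\geq n$ either contains an ADHC or is isomorphic to $F_{2n}^1$ or $F_{2n}^2$, the only work required is to verify that neither exceptional graph has semi-degree at least $n+1$. Concretely, I would show that $\delta^0(F_{2n}^1)=\delta^0(F_{2n}^2)=n$, so the strict hypothesis $\delta^0(D)\geq n+1$ rules out both exceptions and forces the desired conclusion.

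First I would compute the semi-degrees of the base digraph $F_{2n,1}$ of Definition~\ref{CaiF}. Here $|X_1|=|X_2|=n-1$ and $Y_i=\{y_i\}$ with $|Y_i|=1$. Applying the two edge rules, the vertex $y_1$ has out-neighborhood $X_1$ (by rule (i), after removing the self-loop) and in-neighborhood $X_2$ (rule (ii) read in reverse), each of size $n-1$; the situation at $y_2$ is symmetric. A vertex $v\in X_1$ has out-neighborhood $Y_2\cup X_2$ and in-neighborhood $Y_1\cup X_2$, each of size $n$, and $X_2$-vertices have symmetric degrees. Hence $\delta^0(F_{2n,1})=n-1$, achieved exactly at $y_1$ and $y_2$.

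Next I would track how the extra edges in $F_{2n}^1$ and $F_{2n}^2$ modify these degrees. In $F_{2n}^1$ the added edges $(y_1,y_2)$ and $(y_2,y_1)$ each raise the in- and out-degrees of both $y_1$ and $y_2$ by one, bringing them from $n-1$ up to exactly $n$, while no other vertex is affected; hence $\delta^0(F_{2n}^1)=n$. In $F_{2n}^2$ the added edges $(y_1,y_2)$, $(y_2,x)$, $(x,y_1)$ likewise raise both degrees of $y_1$ and $y_2$ from $n-1$ to $n$, and they also raise both the in- and out-degree of the distinguished vertex $x\in X_1$ from $n$ to $n+1$; every remaining vertex retains its original degrees of $n$. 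Thus $\delta^0(F_{2n}^2)=n$ as well.

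Finally, suppose $D$ is a directed graph on $2n$ vertices with $\delta^0(D)\geq n+1$, with $n$ large. Then $\delta^0(D)\geq n$, so Theorem~\ref{main} applies and either $D$ contains an ADHC or $D\cong F_{2n}^i$ for some $i\in\{1,2\}$. Since the semi-degree is invariant under isomorphism and $\delta^0(F_{2n}^1)=\delta^0(F_{2n}^2)=n<n+1\leq\delta^0(D)$, the latter alternative is impossible, so $D$ contains an ADHC. The entire content of the corollary lies in Theorem~\ref{main}; the only place anything could go wrong is in the bookkeeping that pins down the semi-degrees of the two exceptional graphs, and this is a straightforward case check rather than a genuine obstacle.
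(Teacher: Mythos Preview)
Your proposal is correct and follows exactly the paper's approach: the corollary is stated immediately after the observation that $\delta^0(F_{2n}^1)=\delta^0(F_{2n}^2)=n$, and you have simply supplied the easy degree computation that the paper leaves implicit.
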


Since we have determined the semi-degree threshold for ADHC's, we go back and modify the original conjecture that Grant hinted at.

\begin{conjecture}\label{totdeg}
Let $D$ be a directed graph on $2n$ vertices.  If $\delta(D)\geq 2n+1$, then $D$ contains an anti-directed Hamiltonian cycle.
\end{conjecture}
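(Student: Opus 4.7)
The plan is to reduce Conjecture~\ref{totdeg} to Theorem~\ref{main} whenever possible, and to handle the residual case by a bipartite source--sink argument in the spirit of the main proof. The first step is to observe that both exceptional digraphs $F^1_{2n}$ and $F^2_{2n}$ have minimum total degree exactly $2n$: every vertex in $X_1\cup X_2$ has in-degree and out-degree exactly $n$ (the extra edges involving $y_1,y_2,x$ affect only those three vertices). Hence the hypothesis $\delta(D)\geq 2n+1$ already rules these out, so if in addition $\delta^0(D)\geq n$, Theorem~\ref{main} finishes the proof. We may therefore assume $\delta^0(D)\leq n-1$, so that some vertex has small in- or out-degree.

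In this residual case, partition $V(D)=B_-\sqcup C\sqcup B_+$, where $B_-=\{v:d^-(v)<n\}$ and $B_+=\{v:d^+(v)<n\}$. The total-degree hypothesis forces $B_-\cap B_+=\emptyset$, vertices in $B_-$ satisfy $d^+(v)\geq n+2$, vertices in $B_+$ satisfy $d^-(v)\geq n+2$, and vertices in $C$ satisfy $\min(d^+(v),d^-(v))\geq n$. An ADHC on $2n$ vertices consists of $n$ alternating ``sources'' (two outgoing cycle-edges) and $n$ ``sinks'' (two incoming cycle-edges), so it is natural to declare every $v\in B_-$ to be a source and every $v\in B_+$ to be a sink, and to balance the partition by splitting $C$ appropriately. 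Provided $|B_-|,|B_+|\leq n$, the problem then becomes a Hamilton cycle problem in the balanced bipartite digraph $D'$ whose edges are the arcs of $D$ directed from the chosen source set $S$ to the chosen sink set $T$. One can then adapt the absorbing/stability/regularity framework of Theorem~\ref{main} to this bipartite setting: build an absorbing path compatible with the prescribed source/sink roles, cover the bulk of $V(D)$ by an almost-spanning system of proper ADPs using the nonextremal case of the main argument, and close up via the absorber. The strict inequality $\delta(D)\geq 2n+1$ provides exactly the slack needed to avoid the bipartite analogues of $F^1_{2n}$ and $F^2_{2n}$ in the extremal case.

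The main obstacle is to handle the situation when $|B_-|>n$ or $|B_+|>n$, which forces some degree-deficient vertex to play the ``wrong'' role. Here the bipartite reduction breaks down and one must show that such a $D$ is already close to a blow-up of a small template (in the spirit of the family $F_{2n,k}$), and then exhibit the ADHC directly in that near-extremal regime, using $\delta\geq 2n+1$ to supply the crucial missing arc. A secondary difficulty is that vertices in $B_{\pm}$ may have only $2$ or $3$ neighbors on the opposite side, so the absorbing step must be strengthened to guarantee many disjoint absorbers for each such low-degree vertex. The crux of the proof is therefore a fine-grained extremal/stability analysis cataloguing the obstructions at $\delta=2n$ and verifying that the gap to $\delta=2n+1$ eliminates them all; this stability step is expected to be the hardest part, since the list of digraphs with $\delta=2n$ and no ADHC is presumably much richer than $\{F^1_{2n},F^2_{2n}\}$.
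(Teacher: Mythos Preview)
The statement you are attempting to prove is labelled \emph{Conjecture} in the paper and is explicitly left open; the paper contains no proof of it, so there is nothing to compare your attempt against. Your proposal is therefore not a comparison target but a research plan for an open problem.

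As a plan, your first reduction is sound: if $\delta^0(D)\ge n$ then, since every vertex of $F^1_{2n}$ and $F^2_{2n}$ has total degree exactly $2n$, the hypothesis $\delta(D)\ge 2n+1$ excludes both exceptional digraphs and Theorem~\ref{main} applies (for large $n$). However, the remainder is not a proof but an outline with acknowledged gaps. In particular, you correctly identify the two genuine obstacles---the case $\max\{|B_-|,|B_+|\}>n$ and the presence of vertices whose ``correct-side'' degree may be extremely small---but you do not resolve either. The absorbing machinery of Section~\ref{nonextreme} relies on every vertex having semi-degree close to $n$ to guarantee $\Omega(n^4)$ absorbers per pair; a vertex with, say, $d^-(v)=2$ breaks Claim~\ref{manyabsorbers} outright, and your remark that the absorbing step ``must be strengthened'' does not indicate how. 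Likewise, the promised catalogue of $\delta=2n$ obstructions and the verification that $\delta\ge 2n+1$ kills them all is asserted rather than carried out. What you have written is a reasonable programme, but it is not a proof, and the paper offers none to measure it against.
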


An \emph{anti-directed $2$-factor} on $n$ vertices is a directed graph in which the underlying graph forms a $2$-factor and no pair of consecutive edges forms a directed path (once again note that $n$ must be even for an anti-directed $2$-factor to exist).  Diwan, Frye, Plantholt, and Tipnis conjectured that if $D$ is a directed graph on $2n\geq 8$ vertices and $\delta^0(D)\geq n$, then $D$ contains an anti-directed $2$-factor \cite{DFPT}.  Since it can be easily shown that $F^1_n$ and $F^2_n$ each contain an anti-directed $2$-factor with two cycles we also obtain the following corollary of Theorem \ref{main}, which implies their conjecture for sufficiently large $n$.

\begin{corollary}\label{2factor}
Let $D$ be a directed graph on $2n$ vertices.  If $n$ is sufficiently large and $\delta^0(D)\geq n$, then $D$ contains an anti-directed $2$-factor with at most two cycles.
\end{corollary}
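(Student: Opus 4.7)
My plan is to reduce Corollary~\ref{2factor} directly to Theorem~\ref{main}. Given $D$ on $2n$ vertices with $\delta^0(D)\geq n$ and $n$ large, Theorem~\ref{main} outputs one of three possibilities: either $D$ contains an ADHC, or $D\cong F_{2n}^1$, or $D\cong F_{2n}^2$. In the first case the ADHC itself is an anti-directed $2$-factor consisting of a single cycle, so the corollary is immediate. It therefore suffices to exhibit an explicit anti-directed $2$-factor consisting of exactly two cycles inside each of the two exceptional digraphs.

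Since $F_{2n,1}$ is a spanning subdigraph of both $F_{2n}^1$ and $F_{2n}^2$, I would construct the $2$-factor using only edges of $F_{2n,1}$, ignoring the additional edges introduced in Definition~\ref{CaiF}; this handles both exceptional cases at once. Let $Y_1=\{y_1\}$ and $Y_2=\{y_2\}$, choose distinct $a,b\in X_1$ and any $c\in X_2$, and form the anti-directed $4$-cycle
\[
C_1 \;:\; y_1\to a\leftarrow c\to b\leftarrow y_1,
\]
whose four edges all lie in $Y_1\to X_1$ or $X_2\to X_1$. Then enumerate the remaining vertices $X_1\setminus\{a,b\}=\{u_1,\ldots,u_{n-3}\}$ and $X_2\setminus\{c\}=\{w_1,\ldots,w_{n-2}\}$ and close the rest, together with $y_2$, into a single anti-directed cycle
\[
C_2 \;:\; y_2\to w_1\leftarrow u_1\to w_2\leftarrow u_2\to w_3\leftarrow\cdots\to w_{n-2}\leftarrow y_2,
\]
in which $y_2,u_1,\ldots,u_{n-3}$ serve as sources and $w_1,\ldots,w_{n-2}$ as sinks, and in which every edge lies in $Y_2\to X_2$ or $X_1\to X_2$.

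The remaining verification is routine bookkeeping: $C_1$ and $C_2$ are vertex-disjoint, together they cover all $4+(2n-4)=2n$ vertices of $D$, each is anti-directed by construction (sources and sinks alternate), and every required edge is present in $F_{2n,1}$ provided $n\geq 4$, which is guaranteed for $n$ sufficiently large. There is no substantial obstacle here — the hard work has already been absorbed into Theorem~\ref{main}; once the exceptional digraphs are identified, splitting off a short $4$-cycle containing $y_1$ and fitting the rest (including $y_2$) into a single long anti-directed cycle is straightforward.
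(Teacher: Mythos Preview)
Your proposal is correct and follows exactly the approach the paper sketches: reduce to Theorem~\ref{main}, and in the two exceptional cases exhibit an anti-directed $2$-factor with two cycles (the paper merely asserts this ``can be easily shown'' without giving the construction). Your explicit construction inside $F_{2n,1}$ --- a $4$-cycle absorbing $y_1$ and a $(2n-4)$-cycle absorbing $y_2$ --- is a clean witness and all the edge checks go through as stated.
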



Let ${L}_n$ be the graph on vertex set $\{u_1,\dots, u_{n},v_1,\dots, v_{n}\}$ such that $\{u_i, v_j\}\in E({L}_n)$ if and only if $|i-j| \leq 1$.  We call ${L}_n$ a \emph{ladder} and note that ${L}_n$ contains every bipartite $2$-factor on $2n$ vertices.  Let $\vec{L}_n$ be the directed graph obtained from $L_n$ by orienting every edge $\{u_i, v_j\}$ from $u_i$ to $v_j$.  We call $\vec{L}_n$ an \emph{anti-directed ladder} and note that $\vec{L}_n$ contains every anti-directed $2$-factor on $2n$ vertices.

Czygrinow and Kierstead determined the minimum degree threshold for a balanced bipartite graph to contain a spanning ladder.

\begin{theorem}[Czygrinow, Kierstead \cite{CK}]
  \label{CK:ladder}
  There exists $n_0$ such that if $G$ is a balanced bipartite graph on $2n\geq 2n_0$ vertices with $\delta(G)\geq \frac{n}{2}+1$, then $L_n\subseteq G$.
\end{theorem}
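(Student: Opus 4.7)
The plan is to follow a now-standard stability template: split into \emph{extremal} and \emph{non-extremal} cases, where $G$ is extremal if it is close to the barrier example, namely two vertex-disjoint copies of $K_{n/2,n/2}$. That graph has $\delta=n/2$ and contains no spanning ladder (since $L_n$ is connected), so the $+1$ in the hypothesis is exactly what rules it out. Formally, I would call $G$ $\gamma$-extremal if there is a partition $A = A_1 \sqcup A_2$ and $B = B_1 \sqcup B_2$ with $|A_i|=|B_i|=n/2$ and at most $\gamma n^2$ edges between $A_1 \cup B_1$ and $A_2 \cup B_2$, and treat the cases $\gamma$-extremal and not $\gamma$-extremal separately.

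In the non-extremal case, I would apply Szemer\'edi's Regularity Lemma to $G$ to obtain an $\varepsilon$-regular bipartite partition into equal-size clusters, yielding a reduced graph $R$. The condition $\delta(G) \geq n/2+1$ descends to $R$ as a minimum-degree condition of $(1/2-o(1))|V(R)|$ on each side, and non-extremality means $R$ has no near-balanced sparse cut. A bipartite Dirac-type argument then produces a Hamilton cycle in $R$ that alternates sides; by slicing each cluster into equal parts along this cycle, consecutive slices play the roles of consecutive rungs $(u_i,v_i)$ of the ladder. The Blow-up Lemma of Koml\'os-S\'ark\"ozy-Szemer\'edi (which applies since $L_n$ has maximum degree $3$) then embeds $L_n$ along this skeleton. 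A reservoir/absorbing step handles the $O(\varepsilon n)$ vertices left out by the regularity partition.

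In the extremal case, begin by \emph{cleaning up} the partition: move each vertex into whichever half contains the majority of its neighbors, so that afterwards almost every vertex is ``internal'' to its half. The minimum-degree condition forces only a controlled number of edges across the cut, and crucially the $+1$ guarantees at least two suitably-placed crossing edges. The plan is to build a spanning ladder inside each near-biclique $A_i \cup B_i$ with endpoints prescribed to match two chosen crossing edges, and then glue the two sub-ladders together through those edges to form $L_n$. This reduces the problem to a subroutine: find a spanning ladder in an almost-complete balanced bipartite graph with specified endpoint pairs, which a short induction handles once the density is high enough.

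The main obstacle is the extremal case, and specifically arranging the two gluing rungs at mutually compatible endpoints while simultaneously building spanning ladders in the two near-bicliques. The cleanup can fail if too many vertices are genuinely ``mixed'' (having comparable numbers of neighbors in each half), but such vertices themselves supply crossing edges that would contradict $\gamma$-extremality when $\gamma$ is chosen small enough, so a rebalancing argument either restores the clean structure or propels $G$ back into the non-extremal regime. Getting the threshold exactly $n/2+1$ rather than the $n/2+o(n)$ produced by regularity alone hinges on tracking these single-vertex contributions carefully through the cleanup; this is the subtle bookkeeping step where the extra $+1$ is spent.
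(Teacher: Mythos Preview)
This theorem is not proved in the present paper; it is quoted as an external result of Czygrinow and Kierstead \cite{CK} and used only as a black box (in the proof of Observation~\ref{asymptotic}). Consequently there is no ``paper's own proof'' to compare your proposal against.

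Your outline is a plausible sketch of how such a threshold result is typically established, and indeed the stability/regularity/Blow-up template you describe is broadly the method used in \cite{CK}. But as written it is only a plan, not a proof: each of the three main steps (the reduced-graph Hamilton cycle, the Blow-up embedding with reservoir, and especially the extremal-case gluing with exact endpoint control) requires substantial work that you have only named, not carried out. If your intent was to reprove Theorem~\ref{CK:ladder} from scratch, you would need to supply those details; if your intent was simply to use it, then citing \cite{CK} as the paper does is the appropriate move.
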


We make the following conjecture which would strengthen Corollary \ref{cor:main}.

\begin{conjecture}\label{conj:ladder}
Let $D$ be a directed graph on $2n$ vertices.  If $n$ is sufficiently large and $\delta^0(D)\geq n+1$, then $\vec{L}_n\subseteq D$.  In particular $D$ contains every possible anti-directed $2$-factor.
\end{conjecture}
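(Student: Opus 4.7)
The plan is to reduce Conjecture~\ref{conj:ladder} to Theorem~\ref{CK:ladder} by finding a balanced bipartition $V(D) = U \sqcup V$ with $|U| = |V| = n$ that satisfies $|N^+(u) \cap V| \geq \frac{n}{2} + 1$ for every $u \in U$ and $|N^-(v) \cap U| \geq \frac{n}{2} + 1$ for every $v \in V$. Given such a bipartition, the underlying undirected bipartite graph $B$ on $(U, V)$ with edge set $\{\{u,v\} : u \in U,\ v \in V,\ (u,v) \in E(D)\}$ satisfies $\delta(B) \geq \frac{n}{2} + 1$, so Theorem~\ref{CK:ladder} produces a spanning ladder in $B$. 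Since every edge of $B$ is the image of an edge oriented from $U$ to $V$ in $D$, this spanning ladder lifts directly to a copy of $\vec{L}_n$ in $D$.

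The core difficulty is the construction of such a bipartition. A uniformly random balanced bipartition only concentrates $|N^+(u)\cap V|$ and $|N^-(v)\cap U|$ around $(n+1)/2$ with fluctuations of order $\sqrt{n}$, so it typically fails the threshold at many vertices. I would therefore proceed via a stability dichotomy in the spirit of the proof of Theorem~\ref{main}. In the near-extremal case, where $D$ is structurally close to some $F_{2n,k}$ from Definition~\ref{CaiF}, the natural bipartition $U := X_1 \cup Y_2$ and $V := X_2 \cup Y_1$ sends every $U$-to-$V$ out-edge of $F_{2n,k}$ into $X_2$, giving bipartite degree $n - k$; because $\delta^0(F_{2n,k}) = n-1$ while $\delta^0(D) \geq n+1$, the parameter $k$ must be bounded away from $n/2$ whenever $D$ is close to $F_{2n,k}$, and the two units of semi-degree slack are enough to absorb the perturbation errors when transferring the four parts from $F_{2n,k}$ to $D$, keeping the bipartite condition above $\frac{n}{2}+1$.

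In the non-extremal case, $D$ admits no such near four-part structure, and its edges are sufficiently spread out that the absorbing-method and path-extension techniques underlying the proof of Theorem~\ref{main} can be upgraded to target $\vec{L}_n$ directly, with the extra slack $\delta^0(D) \geq n+1$ (versus $\geq n$ sufficient for a mere ADHC) compensating for the additional rigidity of the ladder. I expect the main obstacle to lie precisely in this absorber construction: a ladder absorber must itself be a short ladder segment capable of swallowing any small set of leftover vertices while maintaining three out-neighbors at each source vertex, which is strictly more demanding than the analogous ADHC absorber and requires both endpoints of the absorber to possess rich extension flexibility on both sides of the bipartition.
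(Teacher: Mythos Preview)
This statement is a \emph{conjecture} that the paper explicitly leaves open; the paper proves only the asymptotic version (Observation~\ref{asymptotic}) via exactly the random-bipartition-plus-Theorem~\ref{CK:ladder} reduction you describe in your first paragraph. So there is no ``paper's proof'' to compare against, and your proposal is an attempt to settle an open problem.

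Your extremal-case argument contains a concrete error. With $U := X_1 \cup Y_2$ and $V := X_2 \cup Y_1$, you check that every $u \in U$ has $|N^+(u) \cap V| = |X_2| = n-k$, but you never check the in-degree condition on the $V$ side. In $F_{2n,k}$ the in-neighbourhood of any $v \in Y_1$ is $Y_1 \cup X_2 \subseteq V$, so $|N^-(v) \cap U| = 0$. Two extra in-edges per vertex cannot repair this when $|Y_1| = k$ is linear in $n$. Your assertion that ``$k$ must be bounded away from $n/2$'' is also unsupported: one can take $F_{2n,n/2}$ and add two arbitrary out-edges and two arbitrary in-edges at each vertex to obtain a digraph with $\delta^0 \ge n+1$ that is as close to $F_{2n,n/2}$ as one likes. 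Note, too, that even for the easier ADHC problem the paper does \emph{not} reduce the extremal case to a single bipartition; it splits the four blocks into eight pieces and builds two bipartite Hamiltonian paths glued by carefully chosen connecting edges (Section~4). A ladder is strictly more rigid than a Hamiltonian cycle, so there is no reason to expect a single global bipartition to suffice here when it already fails for the cycle.

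In the non-extremal case your sketch is vaguer still: you acknowledge that a ladder absorber must itself be a ladder segment with richer extension properties than the $4$-tuple absorbers of Lemma~\ref{absorbpath}, but you give no construction, and the covering step (Claim~\ref{clm:covering}) produces long anti-directed \emph{paths}, not ladder segments. Upgrading both pieces is the substance of the conjecture, not a routine modification.
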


We note that Conjecture \ref{conj:ladder} holds asymptotically.

\begin{observation}\label{asymptotic}
For all $\ep>0$, there exists $n_0$ such that if $D$ is a directed graph on $2n\geq 2n_0$ vertices with $\delta^0(D)\geq (1+\ep)n$, then $\vec{L}_n\subseteq D$.  
\end{observation}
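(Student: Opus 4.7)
The plan is to reduce Observation \ref{asymptotic} to the undirected ladder-embedding theorem of Czygrinow and Kierstead (Theorem \ref{CK:ladder}) by taking a random equipartition of $V(D)$. Since the anti-directed ladder $\vec{L}_n$ has all of its edges oriented from its $u$-side to its $v$-side, it suffices to find an equipartition $V(D) = U \sqcup V$ with $|U|=|V|=n$ such that the bipartite graph $H$ on $(U,V)$ whose edges are precisely the $U$-to-$V$ arcs of $D$ has minimum degree at least $\tfrac{n}{2}+1$. Indeed, Theorem \ref{CK:ladder} will then produce an undirected copy of $L_n$ inside $H$, and every such copy automatically inherits the $U$-to-$V$ orientation from $D$, giving a copy of $\vec{L}_n$.

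To produce such a partition I would draw $(U,V)$ uniformly at random from among all equipartitions of $V(D)$. For a fixed vertex $u$, conditioned on $u\in U$ the random variable $|N^+(u)\cap V|$ is hypergeometric with mean at least $d^+(u)\cdot\tfrac{n}{2n-1}\geq \tfrac{(1+\ep)n}{2}$. A standard Hoeffding/Chernoff tail bound for hypergeometric variables then gives
\[
\Pr\!\left[\, |N^+(u)\cap V| \leq \tfrac{n}{2} \;\big|\; u\in U \,\right] \leq e^{-c\ep^2 n}
\]
for an absolute constant $c>0$, and an analogous bound holds for $\Pr[\,|N^-(v)\cap U| \leq \tfrac{n}{2} \mid v\in V\,]$. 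A union bound over all $2n$ vertices and the two events shows that, provided $n\geq n_0(\ep)$ is sufficiently large, with probability tending to $1$ the random equipartition satisfies $|N^+(u)\cap V|\geq \tfrac{n}{2}+1$ for every $u\in U$ and $|N^-(v)\cap U|\geq \tfrac{n}{2}+1$ for every $v\in V$.

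Fixing such a partition and defining $H$ as above, the two degree conditions translate exactly into $\delta(H)\geq \tfrac{n}{2}+1$, so Theorem \ref{CK:ladder} (after enlarging $n_0$ if necessary to absorb its own threshold) produces a spanning ladder $L_n\subseteq H$, which, as noted in the first paragraph, is simultaneously a copy of $\vec{L}_n$ in $D$.

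The argument contains no genuine obstacle: the one point that deserves a moment of care is that the concentration bound must be uniform in $u$ (and $v$) so that the union bound over $2n$ events comfortably absorbs the individual failure probabilities. This uniformity is automatic because the exponent $c\ep^2 n$ depends only on $\ep$ and not on the particular vertex, so neither the random-slicing step nor the invocation of Theorem \ref{CK:ladder} requires any new combinatorial input.
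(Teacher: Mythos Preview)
Your proposal is correct and follows essentially the same argument as the paper: a random balanced bipartition, Chernoff bounds to guarantee $\delta(H)\ge \tfrac{n}{2}+1$ in the auxiliary bipartite graph of $U$-to-$V$ arcs, then an application of Theorem~\ref{CK:ladder} to extract $L_n$, which corresponds to $\vec{L}_n$ in $D$. The paper's proof is just a terser version of what you wrote.
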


\begin{proof}
Let $X_1,X_2$ be a random balanced bipartition of $V(D)$.  We expect $\delta^+(x, X_2),\delta^-(x, X_1)\geq \frac{1}{2}(1+\ep)n$ for all $x\in X_1\cup X_2$, so by Chernoffs inequality there exists a such a partition $X_1, X_2$ which satisfies $\delta^+(X_1, X_2)\geq \frac{n}{2}+1$ and $\delta^-(X_2, X_1)\geq \frac{n}{2}+1$.  Let $G$ be an $X_1,X_2$-bipartite graph such that $\{u,v\}\in E(G)$ if and only if $u\in X_1$, $v\in X_2$ and $(u,v)\in \vec{E}_D(X_1, X_2)$.  Note that $G$ is a balanced bipartite graph on $2n$ vertices with $\delta(G)\geq \frac{n}{2}+1$ and thus by Theorem \ref{CK:ladder}, $G$ contains a spanning ladder $L_n$ which corresponds to a spanning anti-directed ladder $\vec{L}_n$ in $D$.
\end{proof}

\section{Overview}

Note that Observation \ref{asymptotic} also implies that Theorem \ref{main} holds asymptotically.  To get the exact result, we use the now common stability technique where we split the proof into two cases depending on whether $D$ is ``close" to an extremal configuration or not (see Figure \ref{GenExtremal}).  If $D$ is close to an extremal configuration, then we use some ad-hoc techniques which rely on the exact minimum semi-degree condition and if $D$ is not close to an extremal configuration then we use the recent absorbing method of R\"odl, Ruci\'nski, and Szemer\'edi (as opposed to the regularity/blow-up method).  

To formally say what we mean by ``close" to an extremal configuration we need the following definition.

\begin{definition}
Let $D$ be a directed graph on $2n$ vertices.  We say $D$ is $\alpha$-extremal if there exists $A, B\subseteq V(D)$ such that $(1-\alpha)n\leq |A|,|B|\leq (1+\alpha)n$ and $\Delta^+(A,B)\leq \alpha n$ and $\Delta^-(B, A)\leq \alpha n$.
\end{definition}

This definition is more restrictive than simply bounding the number of edges, thus it will help make the extremal case less messy.  However, a non-extremal set still has many edges from $A$ to $B$.

\begin{observation}\label{edgeobs}
Let $0<\alpha\ll 1$.  Suppose $D$ is not $\alpha$-extremal, then for $A, B\subseteq V(D)$ with $(1-\alpha/2)n\leq |A|,|B|\leq (1+\alpha/2)n$, we have $\vec{e}(A, B)\geq \frac{\alpha^2}{2} n^2$.
\end{observation}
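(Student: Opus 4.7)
The plan is a short contrapositive averaging argument. I would assume for contradiction that $A, B \subseteq V(D)$ satisfy $(1 - \alpha/2)n \le |A|, |B| \le (1 + \alpha/2)n$ and $\vec{e}(A, B) < \frac{\alpha^2}{2} n^2$, and from these extract a pair $A', B'$ witnessing that $D$ is $\alpha$-extremal, which is the desired contradiction.

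The construction is symmetric on the two sides. Define the exceptional sets
\[
A^* := \{ a \in A : |N^+(a) \cap B| > \alpha n \}, \qquad B^* := \{ b \in B : |N^-(b) \cap A| > \alpha n \},
\]
and set $A' := A \setminus A^*$, $B' := B \setminus B^*$. Each vertex of $A^*$ contributes strictly more than $\alpha n$ edges to $\vec{e}(A, B)$, so Markov's inequality gives $|A^*| < \vec{e}(A, B)/(\alpha n) < \alpha n / 2$, and symmetrically $|B^*| < \alpha n / 2$. Therefore $|A'| \ge |A| - \alpha n/2 \ge (1 - \alpha)n$, and trivially $|A'| \le |A| \le (1 + \alpha)n$; the same bounds hold for $|B'|$. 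By construction every $a \in A'$ satisfies $|N^+(a) \cap B'| \le |N^+(a) \cap B| \le \alpha n$, and every $b \in B'$ satisfies $|N^-(b) \cap A'| \le |N^-(b) \cap A| \le \alpha n$, so $\Delta^+(A', B') \le \alpha n$ and $\Delta^-(B', A') \le \alpha n$. Thus $A', B'$ witness $\alpha$-extremality of $D$.

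There is essentially no real obstacle here: the $(1 \pm \alpha/2)n$ window in the hypothesis and the $(1 \pm \alpha)n$ window in the definition of $\alpha$-extremal are calibrated to differ by exactly the $\alpha n / 2$ slack needed to absorb the Markov-removed vertices on each side, and the threshold $\alpha n$ in the definitions of $A^*$ and $B^*$ is forced by matching the maximum-degree condition in the definition of $\alpha$-extremal. The only small design choice is to perform the two trimmings independently; since removing vertices from $A$ only further restricts $B$'s relevant in-neighborhoods and vice versa, the two steps interact monotonically and there is no circularity to resolve.
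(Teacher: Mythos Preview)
Your proof is correct and takes a slightly different, arguably cleaner route than the paper's. The paper argues iteratively: as long as $|A|,|B|\ge(1-\alpha)n$, non-$\alpha$-extremality forces the existence of some vertex $v$ with $\deg^+(v,B)\ge\alpha n$ (if $v\in A$) or $\deg^-(v,A)\ge\alpha n$ (if $v\in B$); delete $v$, collect at least $\alpha n$ edges toward $\vec{e}(A,B)$, and repeat for at least $\frac{\alpha}{2}n$ steps before one of the sets drops below $(1-\alpha)n$. Your Markov argument is the one-shot version of the same idea: rather than peeling off one high-degree vertex at a time, you strip off all of them at once on each side and observe that what remains is an $\alpha$-extremal witness. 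The iterative version makes the edge count explicit step by step; your contrapositive goes straight to the structural definition and is a bit more economical.
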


\begin{proof}

Let $A, B\subseteq V(D)$ with $(1-\alpha/2)n\leq |A|,|B|\leq (1+\alpha/2)n$.  Since $D$ is not $\alpha$-extremal, there is some vertex $v\in A$ with $\deg^+(v, B)\geq \alpha n$ or $v\in B$ with $\deg^-(v, A)\geq \alpha n$.  Either way, we get at least $\alpha n$ edges.  Now delete $v$, and apply the argument again to get another $\alpha n$ edges.  We may repeat this until $|A|$ or $|B|$ drops below $(1-\alpha)n$, i.e. for at least $\frac{\alpha}{2} n$ steps.  This gives us at least $\frac{\alpha^2}{2}n^2$ edges in total.

\end{proof}

Finally, we make two more observations which will be useful when working with non-extremal graphs.

\begin{observation}\label{robust}
Let $0<\lambda \le \alpha\ll 1$ and let $D$ be a directed graph on $n$ vertices.  If $D$ is not $\alpha$-extremal and $X\subseteq V(D)$ with $|X|\leq \lambda n$, then $D'=D-X$ is not $(\alpha-\lambda)$-extremal.
\end{observation}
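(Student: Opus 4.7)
The plan is to prove the contrapositive: assume $D' := D - X$ is $(\alpha - \lambda)$-extremal, and show that $D$ must then be $\alpha$-extremal. Let $A', B' \subseteq V(D')$ be the sets witnessing $(\alpha - \lambda)$-extremality of $D'$, and use these same sets as witnesses for $D$, i.e.\ set $A := A'$ and $B := B'$, now regarded as subsets of $V(D)$.

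Next I would verify the three conditions in the definition of $\alpha$-extremality. The relevant ``half-size'' for $D'$ is $(n - |X|)/2$ rather than $n/2$, but the two differ by at most $\lambda n/2$, and the slack between the two tolerance parameters $\alpha$ and $\alpha - \lambda$ is designed precisely to absorb this shift. A short computation using $|X| \le \lambda n$ and $0 < \lambda \le \alpha \ll 1$ yields
\[
(1-\alpha)\tfrac{n}{2} \;\le\; \bigl(1-(\alpha-\lambda)\bigr)\tfrac{n-|X|}{2} \;\le\; |A|,\,|B| \;\le\; \bigl(1+(\alpha-\lambda)\bigr)\tfrac{n-|X|}{2} \;\le\; (1+\alpha)\tfrac{n}{2},
\]
where the first inequality reduces to $\alpha \lambda \ge \lambda^2$, which holds since $\alpha \ge \lambda$. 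For the degree conditions, the key observation is that $A \cup B \subseteq V(D')$ is disjoint from $X$, so for every $v \in A$ the out-neighbors of $v$ lying in $B$ are the same whether computed in $D$ or in $D'$; deleting $X$ destroys no edge with both endpoints outside $X$. Consequently $\Delta^+(A, B)$ in $D$ equals $\Delta^+_{D'}(A', B') \le (\alpha - \lambda)(n - |X|)/2 \le \alpha n/2$, and the analogous argument handles $\Delta^-(B, A)$. Thus $A, B$ witness $\alpha$-extremality of $D$, contradicting the hypothesis.

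The whole argument is essentially parameter bookkeeping, so there is no real obstacle; the definition of ``$\alpha$-extremal'' is set up exactly so that this monotonicity under vertex deletion is almost automatic, once one notes that removing vertices cannot create edges and shifts the target size by at most $\lambda n / 2$, which is comfortably absorbed by the $\lambda$ of slack built into the tolerance.
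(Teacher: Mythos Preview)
Your proof is correct and follows essentially the same route as the paper. The only cosmetic difference is that the paper argues directly---it takes an arbitrary pair $A',B'\subseteq V(D')$ of the right size, verifies (via the same chain of inequalities you give) that $A',B'$ also have the right size for $\alpha$-extremality in $D$, and then invokes the non-$\alpha$-extremality of $D$ to produce a vertex of large out- or in-degree---whereas you phrase it as the contrapositive; the underlying computation and the key observation that deleting $X$ cannot affect edges between $A'$ and $B'$ are identical.
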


\begin{proof}
Let $A', B'\subseteq V(D')\subseteq V(D)$ with $(1-\alpha+\lambda)|D'|\leq |A'|,|B'|\leq (1+\alpha-\lambda)|D'|$.  Note that 
\begin{equation*}
(1-\alpha)n\leq (1-\alpha+\lambda)(1-\lambda)n\leq (1-\alpha+\lambda)|D'|\leq |A'|,|B'|\leq (1+\alpha-\lambda)|D'|\leq (1+\alpha)n
\end{equation*}
thus there exists $v\in A'$ such that $\deg^+(v, B')\geq \alpha n\geq (\alpha-\lambda)|D'|$ or $v\in B'$ such that $\deg^-(v, A')\geq \alpha n\geq (\alpha-\lambda)|D'|$.
\end{proof}

\begin{lemma}\label{maxcut}
Let $X, Y\subseteq V(D)$. If $\vec{e}(X, Y)\geq c|X||Y|$, then there exists 
\begin{enumerate}
\item $X'\subseteq X$, $Y'\subseteq Y$ such that $X'\cap Y'=\emptyset$ and
  $\delta^+(X', Y')\geq \frac{c}{8}|Y|, \delta^-(Y',X')\geq \frac{c}{8}|X|$ and

\item a proper anti-directed path in $D[X\cup Y]$ on at least $\frac{c}{4}\cdot\min\{|X|,|Y|\}$ vertices.
\end{enumerate}
\end{lemma}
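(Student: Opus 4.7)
The plan is to establish (i) by a standard degree-cleaning argument, and to derive (ii) by a greedy construction inside the sets produced in (i).

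For (i), I first reduce to the case where $X$ and $Y$ are disjoint. If $W := X \cap Y \neq \emptyset$, partition $W$ into $W_X$ and $W_Y$ and form the disjoint sets $X^* := (X \setminus Y) \cup W_X$ and $Y^* := (Y \setminus X) \cup W_Y$. Choosing the partition of $W$ uniformly at random, each edge of $\vec{e}(X, Y)$ remains in $\vec{e}(X^*, Y^*)$ with probability at least $1/4$; hence some partition achieves $\vec{e}(X^*, Y^*) \ge \frac{c}{4}|X||Y|$, and by replacing $c$ with a constant fraction of itself I may assume from the start that $X \cap Y = \emptyset$. Now iteratively remove any vertex $v$ of the current $X$-side with $\deg^+(v, Y_{\text{current}}) < \frac{c}{8}|Y|$, and any vertex $v$ of the current $Y$-side with $\deg^-(v, X_{\text{current}}) < \frac{c}{8}|X|$. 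Each such removal destroys strictly fewer than $\frac{c}{8}|Y|$ or $\frac{c}{8}|X|$ edges, so if the process exhausted both sides the total loss would be strictly less than $|X| \cdot \frac{c}{8}|Y| + |Y| \cdot \frac{c}{8}|X| = \frac{c}{4}|X||Y|$, contradicting the starting edge count. The process therefore halts with nonempty $X', Y'$ satisfying the required minimum out/in-degree conditions.

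For (ii), apply (i) to obtain disjoint $X' \subseteq X$ and $Y' \subseteq Y$ with $\delta^+(X', Y') \ge \frac{c}{8}|Y|$ and $\delta^-(Y', X') \ge \frac{c}{8}|X|$, and greedily build a proper anti-directed path $x_1 y_1 x_2 y_2 \cdots x_k y_k$ in $D[X' \cup Y']$: choose any $x_1 \in X'$, and at step $j$ extend by selecting an unused out-neighbor $y_j \in Y' \cap N^+(x_j)$ and then an unused in-neighbor $x_{j+1} \in X' \cap N^-(y_j)$. At step $j$ at most $j-1$ vertices of each side have been used, so the degree bounds leave at least $\lceil \frac{c}{8}|X| \rceil - (j-1)$ choices for $x_{j+1}$ and at least $\lceil \frac{c}{8}|Y| \rceil - (j-1)$ choices for $y_j$. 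The construction succeeds up to $k = \lceil \frac{c}{8}\min\{|X|,|Y|\}\rceil$, producing a path on $2k \ge \frac{c}{4}\min\{|X|, |Y|\}$ vertices; since the first edge $(x_1, y_1)$ and the last edge $(x_k, y_k)$ both go from $X'$ to $Y'$, the resulting anti-directed path is proper in the sense of the introduction.

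The main obstacle is part (i) when $X \cap Y \neq \emptyset$: the cleaning step must destroy strictly fewer edges than the starting count, so one has to verify that the random partition preserves a large enough fraction of $\vec{e}(X, Y)$ to absorb the later losses at thresholds $\frac{c}{8}|X|$ and $\frac{c}{8}|Y|$. Once this bookkeeping is in place the rest is routine: the cleaning terminates by finiteness and the greedy extension in (ii) is driven entirely by the two degree bounds produced in (i).
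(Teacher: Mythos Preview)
Your proof is correct and follows essentially the same two-step approach as the paper: first separate $X\cap Y$ to get disjoint sets retaining a quarter of the edges, then clean low-degree vertices, then use the resulting minimum-degree structure to find a long path. The only real difference is in how the quarter-fraction is obtained: you average over a uniformly random bipartition of $X\cap Y$, while the paper takes the partition maximizing $\vec e(X_0,Y_0)$ and argues via a swapping/max-cut inequality that $\vec e(X_0),\vec e(Y_0)\le \vec e(X_0,Y_0)$. Both are standard and yield the same $\frac{c}{4}|X||Y|$ lower bound, after which the vertex-deletion step and the path construction are identical (the paper phrases (ii) as ``a bipartite graph with minimum degree $d$ contains a path on $2d$ vertices,'' which is exactly your greedy argument). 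One cosmetic remark: when you pick $x_{j+1}$ at step $j$, you have already used $j$ vertices of the $X$-side, not $j-1$; this does not affect the final count since you stop at $y_k$ without needing $x_{k+1}$.
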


\begin{proof}
\begin{enumerate}
\item 
  Let $X^* = X \setminus Y$ and $Y^* = Y \setminus X$.
  Delete all edges not in $\vec{E}(X,Y)$.
  Choose a partition
  $\{X'', Y''\}$ of $X\cap Y$ which maximizes $\vec{e}(X^* \cup X'', Y^* \cup
  Y'')$ and set $X_0=X^* \cup X''$ and $Y_0=Y^* \cup Y''$.
  Note that $\vec{e}(X_0) + \vec{e}(Y_0) + \vec{e}(X_0, Y_0)+\vec{e}(Y_0, X_0) = \vec{e}(X,Y)$.
  We have that 
  \begin{equation*}
    \vec{e}(X_0) =
    \sum_{v \in X_0}\deg^+(v, X_0) =
    \sum_{v \in X''}\deg^-(v, X_0) \leq 
    \sum_{v \in X''}\deg^+(v, Y_0) \leq
    \vec{e}(X_0, Y_0)
  \end{equation*}
  where the inequality holds since if $\deg^-(v, X_0)>\deg^+(v, Y_0)$ for some
  $v\in X''$, then we could move $v$ to $Y''$ and increase the number of edges across the partition.  Similarly, $\vec{e}(X_0, Y_0)\geq \vec{e}(Y_0)$.  Thus $\vec{e}(X_0, Y_0)\geq \frac{1}{4}\vec{e}(X, Y)\geq \frac{c}{4} |X||Y|$.

If there exists $v\in X_0$ such that $\deg^+(v, Y_0)<\frac{c}{8} |Y|$ or $v\in Y_0$ such that $\deg^-(v, X_0)<\frac{c}{8} |X|$, then delete $v$ and set $X_1=X_0\setminus\{v\}$ and $Y_1=Y_0\setminus\{v\}$.  Repeat this process until there no vertices left to delete.  This process must end with a non-empty graph because fewer than $|X|\frac{c}{8}|Y|+|Y|\frac{c}{8}|X|=\frac{c}{4}|X||Y|$ edges are deleted in this process.  Finally, let $X'$ and $Y'$ be the sets of vertices which remain after the process ends.

\item Apply Lemma \ref{maxcut}.(i) to obtain sets $X'\subseteq X$,
  $Y'\subseteq Y$ such that $X'\cap Y'=\emptyset$ and $\delta^+(X', Y')\geq
  \frac{c}{8}|Y|$ and $\delta^-(Y', X')\geq \frac{c}{8}|Y|$.  Let $G$ be an
  auxiliary bipartite graph on $X', Y'$ with $E(G)=\{\{x,y\}:(x,y)\in
  \vec{E}(X', Y')\}$.  Note that $\delta(G)\geq \frac{c}{8}\min\{|X|,|Y|\}$
  and thus $G$ contains a path on at least $2\delta(G)\geq
  \frac{c}{4}\cdot\min\{|X|,|Y|\}$ vertices, which starts in $X$.
  This path contains a proper anti-directed path in $D$
  on at least $\frac{c}{4} \cdot \min\{|X|,|Y|\}$ vertices.
\end{enumerate}
\end{proof}

\section{Non-extremal Case}\label{nonextreme}

In this section we will prove that if $D$ satisfies the conditions of Theorem \ref{main} and $D$ is not $\alpha$-extremal, then $D$ has an ADHC.  We actually prove a stronger statement which in some sense shows that the extremal condition is ``stable," i.e. graphs which do not satisfy the extremal condition do not require the tight minimum semi-degree condition.

\begin{theorem}
  \label{nonex:anti}
  For any $\ec \in (0, 1/32)$ there exists $\ep > 0$ and 
  $n_0$ such if $D = (V,E)$ is a 
  directed graph on $2n \ge 2n_0$ vertices,
  $D$ is not $\ec$-extremal and
  $\msd(D) \ge (1 - \varepsilon)n$, then
  $D$ contains an anti-directed Hamiltonian cycle.
\end{theorem}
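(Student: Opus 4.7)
My plan is to prove Theorem~\ref{nonex:anti} by the absorbing method of R\"odl--Ruci\'nski--Szemer\'edi, adapted to the anti-directed setting. The starting observation is that any ADHC canonically partitions $V(D)$ into a set $S$ of \emph{sources} (both incident cycle-edges oriented outward) and a set $T$ of \emph{sinks} (both oriented inward), with $|S|=|T|=n$; it therefore suffices to find a balanced bipartition $(S,T)$ together with a Hamilton cycle in the auxiliary bipartite digraph whose edges are the pairs $s\to t$ with $s\in S$, $t\in T$. I would first fix such a bipartition by choosing $(S,T)$ uniformly at random: by Chernoff's inequality applied vertex-by-vertex, together with $\msd(D)\ge(1-\varepsilon)n$, with positive probability every $v\in S$ satisfies $\outd(v,T)\ge(\tfrac12-\varepsilon_1)n$ and every $v\in T$ satisfies $\ind(v,S)\ge(\tfrac12-\varepsilon_1)n$. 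Observations~\ref{edgeobs} and~\ref{robust} will then guarantee that the non-extremality of $D$ is inherited by this bipartite digraph after the deletion of any small set of vertices, which is what lets me apply Lemma~\ref{maxcut} repeatedly later.

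The core argument then splits into three steps. First, an \emph{absorbing lemma}: construct a short proper anti-directed path $P_{abs}$ of length $o(n)$, balanced between $S$ and $T$, such that for every sufficiently small $R\subseteq V(D)\setminus V(P_{abs})$ with $|R\cap S|=|R\cap T|$ there is a proper anti-directed path on $V(P_{abs})\cup R$ with the same endpoints as $P_{abs}$. I plan to build $P_{abs}$ by showing that every pair $(s,t)\in S\times T$ admits $\Omega(n^c)$ short \emph{pair-absorber gadgets} (anti-directed paths of constant length in which $s$ occupies a source slot and $t$ a sink slot, and which admit a rerouting on the same endpoints that omits $\{s,t\}$), then selecting a random subfamily so that every pair is covered many times, and concatenating the chosen gadgets via a link structure supplied by Lemma~\ref{maxcut}(i). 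Second, a \emph{reservoir lemma}: extract from $V(D)\setminus V(P_{abs})$ a small balanced set $W$ such that any two proper anti-directed paths with compatible source/sink endpoints can be linked by a short proper anti-directed path through $W$; this will follow from a standard random-set argument coupled with the degree bounds on the bipartite digraph. Third, a \emph{covering lemma}: in $D-V(P_{abs})-W$, iteratively pull off long proper anti-directed paths using Lemma~\ref{maxcut}(ii), terminating when the residue has size $o(n)$ and is balanced between $S$ and $T$ by parity bookkeeping.

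The pieces are assembled by linking the cover paths to one another and to $P_{abs}$ through vertices of $W$, producing a single proper anti-directed cycle that misses only a tiny balanced set; the absorbing property of $P_{abs}$ then completes the Hamilton cycle. The principal obstacle is the construction of pair-absorber gadgets. Unlike the directed Hamilton cycle case, each vertex's source/sink role is rigidly dictated by its parity position on the cycle, so a gadget for $(s,t)$ must embed $s$ in a source slot and $t$ in a sink slot and simultaneously admit a reroute on the same endpoints that omits both, all while using only edges certified by the semi-degree bound $(1-\varepsilon)n$. Counting such gadgets reduces to controlling several codegree conditions as the intermediate vertices are specified one by one, and I would handle this by repeated application of Lemma~\ref{maxcut}(i) to the large bipartite slices of the non-extremal digraph, ensuring $\Omega(n^c)$ gadgets per pair, which is enough for a standard probabilistic selection of $P_{abs}$ that works for all pairs simultaneously. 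The constraint $\ec<1/32$ will be exactly what is needed for the cascading loss of constants across the three lemmas to leave enough room in Lemma~\ref{maxcut} at each stage.
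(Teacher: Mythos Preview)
Your proposal has a genuine gap rooted in the decision to fix a source/sink bipartition $(S,T)$ at the outset and then work in the bipartite digraph of edges $S\to T$. After that reduction every vertex has degree only about $(\tfrac12-\varepsilon_1)n$ to the other side, and the sets you must control for the absorber count---for instance $N^-(t)\cap S$ and $N^+(s)\cap T$---have size roughly $n/2$. The $\alpha$-extremality hypothesis, and with it Observation~\ref{edgeobs}, concerns only pairs of sets of size $(1\pm\alpha)n$ and gives no density information between sets of size $n/2$; your claim that non-extremality is ``inherited by this bipartite digraph'' via Observations~\ref{edgeobs} and~\ref{robust} is therefore unjustified, and without it you have no lower bound on the number of pair-absorbers. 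The paper avoids this entirely by never fixing a bipartition: its $(x,y)$-absorber (Claim~\ref{manyabsorbers}) is defined for an \emph{arbitrary} ordered pair of vertices and applies Observation~\ref{edgeobs} directly to $N^-(x)$ and $N^+(y)$, each of size at least $(1-\varepsilon)n$, followed by Lemma~\ref{maxcut}(i). Which leftover vertices become sources and which become sinks is decided only when the residual set $W$ is split into pairs at absorption time, and that flexibility is exactly what lets the size-$n$ extremality hypothesis do its work.

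There is a second, independent gap in the covering step. Iterating Lemma~\ref{maxcut}(ii) inside the bipartite graph yields a path of length $\Theta(n)$ at first, but removing it lowers every remaining vertex's degree to the opposite side by the same order; once roughly half of each side has been covered the guaranteed minimum degree is zero, and you are left with $\Theta(n)$ uncovered vertices rather than $o(n)$. The paper's covering argument (Lemma~\ref{longpath} via Claim~\ref{clm:covering}) is considerably more delicate: it chops a longest anti-directed path into segments of length $O(\log n)$, uses a pigeonhole/degree argument on the leftover set $T$ to locate segments receiving near-full out-degree from $T$ (Claim~\ref{clm:complete_subgraph}), and then applies non-extremality to an auxiliary set $Z$ of size close to $n$ assembled from the unused halves of those segments to gain a small additive increment in total path length at each step. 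A greedy extraction based solely on Lemma~\ref{maxcut}(ii) cannot achieve this.
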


\begin{lemma}\label{absorbpath}
 For all $0<\epsilon\ll \beta\ll \lambda \ll\alpha\ll 1$ there exists $n_0$
 such that if $n\geq n_0$, $D$ is a directed graph on $2n$ vertices,
 $\delta^0(D)\geq (1-\ep)n$, and $D$ is not $\alpha$-extremal, then there
 exists a proper anti-directed path $P^*$  with $|P^*|\leq \lambda n$ such
 that for all $W\subseteq V(D)\setminus V(P^*)$ with $2w:=|W|\leq \beta n$,
 $D[V(P^*)\cup W]$ contains a spanning proper anti-directed path with the same
 endpoints as $P^*$.
\end{lemma}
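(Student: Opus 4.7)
The plan is to use a variant of the R\"odl--Ruci\'nski--Szemer\'edi absorbing method, adapted to proper anti-directed paths. For each ordered pair of distinct vertices $(u,v) \in V(D)^2$, define an \emph{absorber for $(u,v)$} to be a $6$-tuple $(a_1,\dots,a_6)$ of distinct vertices in $V(D) \setminus \{u,v\}$ such that both the ``base'' tuple $a_1 a_2 a_3 a_4 a_5 a_6$ and the ``absorbed'' tuple $a_1 a_2\, u\, a_4\, a_3\, v\, a_5 a_6$ are proper anti-directed paths in $D$; equivalently, the nine edges
\[
(a_1,a_2),\ (a_3,a_2),\ (a_3,a_4),\ (a_5,a_4),\ (a_5,a_6),\ (u,a_2),\ (u,a_4),\ (a_3,v),\ (a_5,v)
\]
all lie in $E(D)$. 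The crucial feature is that replacing the base tuple by the absorbed tuple inside any larger proper ADP inserts $u$ (as a source) and $v$ (as a sink) while keeping the endpoints fixed, without using any edge between $u$ and $v$. Let $\cA_{u,v}$ denote the set of absorbers for $(u,v)$.

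The first step is to show $|\cA_{u,v}| \geq \gamma n^6$ for every ordered pair $(u,v)$ and some $\gamma = \gamma(\alpha) > 0$. Since $\delta^0(D) \geq (1-\ep)n$, the sets $N^+(u)$ and $N^-(v)$ each have size at least $(1-\ep)n$; pick $A \subseteq N^+(u) \setminus \{v\}$ and $B \subseteq N^-(v) \setminus \{u\}$ each of size between $(1-\alpha/2)n$ and $(1+\alpha/2)n$ (truncating if necessary, using $\ep \ll \alpha$). Because $D$ is not $\alpha$-extremal, Observation~\ref{edgeobs} gives $\vec e(B,A) \geq (\alpha^2/2)n^2$. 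Cauchy--Schwarz then yields $\sum_{a \in A} |N^-(a) \cap B|^2 \geq \vec e(B,A)^2 / |A| = \Omega(n^3)$, which in turn produces $\Omega(n^2)$ distinct pairs $(a_3,a_5) \in B^2$ whose common out-neighborhood inside $A$ has size $\Omega(n)$. For each such pair, choose $a_4$ from this common neighborhood, $a_2 \in N^+(a_3) \cap A \setminus \{a_4\}$ (with $\Omega(n)$ choices by the same averaging), and finally $a_1 \in N^-(a_2)$, $a_6 \in N^+(a_5)$, each offering at least $(1-\ep)n$ choices; the product gives $|\cA_{u,v}| = \Omega(n^6)$ after removing the $O(1)$ vertices that must be avoided.

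The second step is a standard probabilistic selection: include each absorber independently in a random family $\mathcal F$ with probability $p = C/n^5$ for a suitably small constant $C = C(\alpha, \beta, \lambda)$. Linearity of expectation, Chernoff-type concentration, and a union bound over the $O(n^2)$ ordered pairs yield with positive probability a family $\mathcal F$ satisfying $|\mathcal F| \leq \lambda n / 20$, $|\cA_{u,v} \cap \mathcal F| \geq 2\beta n$ for every $(u,v)$, and only $o(\beta n)$ pairs of absorbers in $\mathcal F$ share a vertex. Removing one absorber from each overlap yields a pairwise vertex-disjoint subfamily $\mathcal F'$ with $|\cA_{u,v} \cap \mathcal F'| \geq \beta n$ for every pair. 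Order $\mathcal F'$ arbitrarily as $A_1, A_2, \dots$, and between consecutive absorbers $A_i$ (ending at the sink $a_6^{(i)}$) and $A_{i+1}$ (starting at the source $a_1^{(i+1)}$) insert a two-vertex connector $c_1^{(i)} \to c_2^{(i)}$ with $c_1^{(i)} \in N^-(a_6^{(i)})$ and $c_2^{(i)} \in N^+(a_1^{(i+1)})$, both chosen from the pool $U_i$ of vertices not yet used. Observation~\ref{robust} keeps $D[U_i]$ non-extremal after the $O(\lambda n)$ deletions, and Observation~\ref{edgeobs} applied to appropriately sized subsets of those two neighborhoods produces $\Omega(n^2)$ admissible edges, so such a connector always exists. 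Call the resulting proper ADP $P^*$; it has at most $8\,|\mathcal F'| \leq \lambda n$ vertices.

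Finally, for the absorbing property, given $W \subseteq V(D) \setminus V(P^*)$ with $|W| = 2w \leq \beta n$, partition $W$ arbitrarily into $w$ ordered pairs $(x_i, y_i)$ and greedily assign each pair to a distinct unused absorber from $\cA_{x_i, y_i} \cap \mathcal F'$; this succeeds because each such family has at least $\beta n > w$ elements. Swapping the base tuple of each assigned absorber for its absorbed tuple produces a spanning proper ADP of $D[V(P^*) \cup W]$ with the same endpoints as $P^*$. The main obstacle is the first step: the neighborhood-intersection estimates there are the essential input that forces the non-extremality hypothesis via Observation~\ref{edgeobs}, and the hierarchy $\ep \ll \beta \ll \lambda \ll \alpha$ is exactly what makes the Cauchy--Schwarz estimates, the probabilistic concentration, and the connector construction all combine cleanly.
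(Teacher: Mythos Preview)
Your proposal is correct and follows essentially the same absorbing-method strategy as the paper: count many absorbers per pair using non-extremality via Observation~\ref{edgeobs}, select a random family and clean overlaps, link the absorbers into a single proper ADP using short connectors, and then absorb $W$ pair by pair. The only differences are cosmetic: you use $6$-tuple absorbers (with two redundant anchor vertices $a_1,a_6$) where the paper uses $4$-tuples, you count via Cauchy--Schwarz where the paper invokes the min-degree cleanup of Lemma~\ref{maxcut}, and you carry out the random selection inline rather than appealing to the abstract Lemma~\ref{generalabsorbing}.
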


\begin{lemma}\label{longpath}
For all $0<\epsilon\ll \beta\ll \lambda \ll \alpha\ll 1$ there exists $n_0$
such that if $n\geq n_0$, $D$ is a directed graph on $2n$ vertices,
$\delta^0(D)\geq (1-\ep)n$, $D$ is not $\alpha$-extremal, and $P^*$ is a
proper anti-directed path with $|P^*|\leq \lambda n$, then $D$ contains an anti-directed cycle on at least $(2-\beta)n$ vertices which contains $P^*$ as a segment.
\end{lemma}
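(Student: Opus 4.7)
The plan is to remove $V(P^*)$ from $D$, reduce the problem to finding a long proper anti-directed path with prescribed endpoints in the reduced digraph, and then reattach to $P^*$ to close the cycle. Let $v_1$ and $v_d$ denote the out- and in-endpoints of $P^*$, and set $D' := D - V(P^*)$. By Observation~\ref{robust}, $D'$ is not $(\alpha - \lambda)$-extremal, and $\delta^0(D') \geq (1-\varepsilon-\lambda)n$. Define $I := \{u \in V(D') : (u, v_d) \in E(D)\}$ and $O := \{u \in V(D') : (v_1, u) \in E(D)\}$; the semi-degree hypothesis gives $|I|, |O| \geq (1-\varepsilon-\lambda)n$. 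Any proper anti-directed path $Q = u_1 \dots u_m$ in $D'$ with $u_1 \in I$ and $u_m \in O$ combines with $P^*$, via the edges $(u_1, v_d)$ and $(v_1, u_m)$, into an anti-directed cycle containing $P^*$ as a segment, so the task reduces to finding such a $Q$ on at least $(2 - \beta)n - |V(P^*)|$ vertices.

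To access the anti-directed structure, I would take a uniformly random balanced partition $\{X, Y\}$ of $V(D')$. Chernoff bounds and a union bound show that with positive probability every $v \in V(D)$ satisfies $\deg^+(v, Y) \geq (1/2 - \varepsilon')n$ and $\deg^-(v, X) \geq (1/2 - \varepsilon')n$, and that $|I \cap X|$ and $|O \cap Y|$ are both $\Omega(n)$. Let $G$ be the bipartite graph on $X \cup Y$ whose edges are the pairs $\{x, y\}$ with $x \in X$, $y \in Y$ and $(x, y) \in E(D)$; any alternating path in $G$ from $X$ to $Y$ lifts to a proper anti-directed path in $D'$ with out-endpoint in $X$ and in-endpoint in $Y$. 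Applying Observation~\ref{edgeobs} to $D$ and keeping track of the random partition also shows that, for any balanced subsets $A' \subseteq X$, $B' \subseteq Y$ of size close to $|X|$, $G$ carries $\Omega(n^2)$ edges from $A'$ to $B'$, so $G$ inherits a bipartite non-extremal condition.

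The core step, and the main obstacle, is to find in $G$ a path on at least $(2-\beta)n - |V(P^*)|$ vertices with endpoints in $I \cap X$ and $O \cap Y$. The minimum degree of $G$ is just $(1/2 - \varepsilon')n$, below the $n/2 + 1$ threshold of Theorem~\ref{CK:ladder} and standard bipartite Hamilton-path results, so a direct appeal is not possible. Instead, I would run a P\'osa-style rotation-extension argument inside $G$: starting from a maximal path, rotations at the two endpoints produce linearly many candidate endpoints, and the bipartite non-extremality of $G$ guarantees that these candidate sets expand rapidly enough to permit either further extension into an unused vertex or rerouting to escape an apparent obstruction. Iterating until the uncovered set shrinks below $\beta n$, then performing one final round of rotations to steer the endpoints into $I \cap X$ and $O \cap Y$ (which is possible because both candidate sets are linear), yields a bipartite path with the required length and endpoints; lifting it to $D'$ and attaching to $P^*$ via the edges $(a, v_d)$ and $(v_1, b)$ produces the desired anti-directed cycle of length at least $(2-\beta)n$ containing $P^*$ as a segment. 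The hard part will be executing the rotation argument simultaneously with the endpoint constraint: we must both push the uncovered set below $\beta n$ and land the final endpoints in the prescribed sets, using only the non-extremal condition to compensate for the sub-$n/2$ bipartite degree.
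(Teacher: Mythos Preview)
Your route is genuinely different from the paper's, and the obstacle you flag at the end is real and, as written, unfilled. The paper never fixes a bipartition. It first sets aside a small reservoir $\mathcal{R}$ of $(x,y)$-connectors (Claim~\ref{reservoir}), then proves a covering statement (Claim~\ref{clm:covering}): whenever a proper ADP $P$ through $P^*$ in $D-R$ is shorter than $(2-\beta)n$, one can replace it by at most six disjoint proper ADPs of total length at least $|P|+\varepsilon\lceil\tfrac14\log n\rceil$, the first still beginning with $P^*$. Gluing with connectors from $\mathcal{R}$ and iterating gives the long path, and one final connector closes the cycle. The covering claim is proved by cutting $P$ into segments $P_i$ of length $2m$ with $m=\lceil\tfrac14\log n\rceil$, using a pigeonhole argument over $\binom{2m}{b}$ to find, for most $i$, a set $X_i\subseteq V(P_i)$ with a large common in-neighbourhood $Y_i\subseteq T$, and then applying Observation~\ref{edgeobs} to the set $Z:=\bigcup_i (V(P_i)\setminus X_i)$, which by construction has size in $((1-\alpha/2)n,(1+\alpha/2)n)$. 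The whole point is that $Z$ is engineered to have size $\approx n$ so that non-extremality applies.

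Your bipartition $\{X,Y\}$ throws away all edges inside $X$ and inside $Y$, and with them the ability to build such a set. The ``bipartite non-extremal condition'' you write down---$e_G(A',B')=\Omega(n^2)$ whenever $|A'|,|B'|$ are close to $|X|$---already follows from $\delta(G)\ge(1/2-\varepsilon')n$ alone and carries no information from the non-extremality of $D$. For a P\'osa rotation in $G$, the endpoint set after one round has size only about $n/2$, and to push further you need expansion at that scale; Observation~\ref{edgeobs} says nothing about sets of size $n/2$. Concretely, nothing you have rules out an endpoint set $A_1\subseteq X$ with $|A_1|\approx n/2$ and $|N_G(A_1)|\approx n/2$: the out-edges from $A_1$ that witness non-extremality of $D$ may all land back in $X$ or in $V(P^*)$ and be invisible in $G$. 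The paper avoids this precisely by letting each vertex play either role and by manufacturing a size-$n$ set on which Observation~\ref{edgeobs} can be invoked; your fixed bipartition forecloses that manoeuvre, so as it stands the rotation step is a genuine gap rather than a routine detail.
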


First we use Lemma \ref{absorbpath} and Lemma \ref{longpath} to prove Theorem \ref{nonex:anti}.

\begin{proof}
Let $\alpha\in (0, 1/32)$ and choose $0<\epsilon\ll \beta\ll \lambda\ll \sigma
\ll \alpha$.  Let $n_0$ be large enough for Lemma \ref{absorbpath} and Lemma \ref{longpath}.  Let $D$ be a directed graph on $2n$ vertices with $\delta^0(D)\geq (1-\ep)n$.  Apply Lemma \ref{absorbpath} to obtain an anti-directed path $P^*$ having the stated property. Now apply Lemma \ref{longpath} to obtain an anti-directed cycle $C^*$ which contains $P^*$ as a segment.  Let $W=D-C^*$ and note that since $C^*$ is an anti-directed cycle, $|C^*|$ is even which implies $|W|$ is even, since $|D|$ is even.  Finally apply the property of $P^*$ to the set $W$ to obtain an ADHC in $D$.
\end{proof}

\subsection{Absorbing}

To prove Lemma \ref{absorbpath} we will use the following more general statement.

\begin{lemma}\label{generalabsorbing}
  Let $m,d \in \N$, $a > 0$,
  $b \in (0, \frac{a}{2d})$ and
  $c \in 
  \left(0, 2 b \left(\frac{a}{2d} - b\right) \right)$.
  There exists $n_0$ such that 
  when $V$ is a set of order $n \ge n_0$ the following holds.
  For every $S \in \binom{V}{m}$, let $f(S)$ be a subset of $V^d$.
  Call $T \in V^d$ a \emph{good tuple} if $T \in f(S)$ 
  for some $S \in \binom{V}{m}$.
  If $|f(S)| \ge a n^d$ for every $S \in \binom{V}{m}$ then
  there exists a set $\mathcal{F}$ of at most $b n/d$
  good tuples such that
  $|f(S) \cap \mathcal{F}| \ge c n$ for every $S \in \binom{V}{m}$
  and the images of distinct elements of $\mathcal{F}$ are disjoint.
  \label{lem:absorbing}
\end{lemma}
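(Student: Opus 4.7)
The plan is to prove this by a classical probabilistic absorbing argument in the spirit of R\"odl--Ruci\'nski--Szemer\'edi: sample a small random family of good tuples and then prune out tuples responsible for coordinate collisions. Concretely, I would set $p := (1-\eta) b / (d n^{d-1})$ for a small constant $\eta > 0$ to be fixed at the end in terms of $a, b, c, d$, and include each good tuple $T \in V^d$ independently in a random family $\mathcal{R}$ with probability $p$. Since there are at most $n^d$ tuples in $V^d$, $\E[|\mathcal{R}|] \le (1-\eta) b n / d$, and a Chernoff bound forces $|\mathcal{R}| \le b n / d$ except with probability $e^{-\Omega(n)}$.

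For each fixed $S \in \binom{V}{m}$, $|f(S) \cap \mathcal{R}|$ is a sum of independent indicators with mean at least $p \cdot a n^d = (1 - \eta) a b n / d$, so Chernoff yields $|f(S) \cap \mathcal{R}| \ge (1 - \eta)^2 a b n / d$ except with probability $e^{-\Omega(n)}$. Because $m$ is fixed, $|\binom{V}{m}| \le n^m$, so a union bound makes this lower bound hold for \emph{every} $S$ simultaneously with probability $1 - o(1)$. Meanwhile, the expected number of pairs of distinct tuples in $\mathcal{R}$ whose images share a vertex is at most $d^2 p^2 n^{2d-1} = (1-\eta)^2 b^2 n$ (choose the coinciding coordinates in $d^2$ ways, the shared vertex in $n$ ways, and the other $2d-2$ coordinates freely), so by Markov this count is at most $2 b^2 n$ with probability $\ge 1/2$.

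All three events then hold simultaneously with positive probability, and I would fix such an outcome. Pruning one tuple from each intersecting pair produces a subfamily $\mathcal{F} \subseteq \mathcal{R}$ with pairwise disjoint images and $|\mathcal{F}| \le |\mathcal{R}| \le b n / d$, and removes at most $2 b^2 n$ tuples, so $|f(S) \cap \mathcal{F}| \ge (1 - \eta)^2 a b n / d - 2 b^2 n$ for every $S$. The hypothesis $c < 2 b\bigl(a/(2d) - b\bigr) = a b / d - 2 b^2$ provides exactly the algebraic margin needed, so choosing $\eta$ small enough (depending only on $a, b, c, d$) forces $|f(S) \cap \mathcal{F}| \ge c n$ once $n$ is large.

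I do not anticipate any deep obstacle; the whole argument is parameter-chasing. The only place that demands real care is the union bound over $S$, since $n^m$ subsets must be dominated: the Chernoff exponent has to be $\omega(\log n)$, which holds automatically because the relevant means are $\Theta(n)$. The margin $a b / d - 2 b^2 - c > 0$ built into the hypothesis on $c$ is exactly what absorbs the small $\eta$-slack from Chernoff and the $O(b^2 n)$ loss from collision pruning, so no additional idea beyond correctly tuning $p$ and $\eta$ is needed.
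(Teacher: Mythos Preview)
Your proposal is correct and follows essentially the same approach as the paper: random independent selection of tuples with probability $\Theta(n^{1-d})$, Chernoff bounds for $|\mathcal{R}|$ and for each $|f(S)\cap\mathcal{R}|$, a Markov bound on the number of intersecting pairs, a union bound over the $\binom{n}{m}$ sets $S$, and finally pruning one tuple from each colliding pair. The only cosmetic differences are that the paper samples from all of $V^d$ (discarding non-good tuples at the end) rather than only from good tuples, and tracks the slack via an additive $\varepsilon$ instead of your multiplicative $(1-\eta)$; the algebraic margin $ab/d-2b^2-c>0$ is used in exactly the same way.
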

\begin{proof}
  Pick $\varepsilon > 0$ so that  
  \begin{equation*}
    (1 + a)\varepsilon <  \frac{a b}{d} - 2b^2 - c.
  \end{equation*}
  Let $b' := \frac{b}{d}$,
  $p := b' - \varepsilon$ and 
  $c' := c + (d^2 + 1)p^2$.
  Let $\mathcal{F}'$ be a random subset of $V^d$ where each $T \in V^d$
  is selected independently with probability $p n^{1 - d}$.
  Let 
  \begin{equation*}
    \mathcal{O} := 
    \left\{ 
      \{T, T'\} \in \binom{V^d}{2} : \im(T) \cap \im(T') \neq  \emptyset 
    \right\}
  \end{equation*}
  and $\mathcal{O}_{\mathcal{F}'} := \mathcal{O} \cap \binom{\mathcal{F}'}{2}$.

  We only need to show that, for sufficiently large $n_0$,
  with positive probability 
  $|\mathcal{O}_{\mathcal{F}'}| < (d^2 + 1)p^2 n$,
  $|\mathcal{F}'| < b' n$ and
  $|f(S) \cap \mathcal{F}'| > c' n$ for every $S \in \binom{V}{m}$.
  We can then remove at most $(d^2 + 1)p^2 n$ tuples
  from such a set $\mathcal{F}'$ so that the images of the remaining
  tuples are disjoint.
  After also removing every 
  $T \in \mathcal{F}'$ for which there is no $S \in \binom{V}{m}$
  for which $f(S) = T$, 
  the resulting set $\mathcal{F}$ will satisfy the conditions of the lemma.

  Clearly,  
  \begin{equation*}
    |\mathcal{O}| \le n \cdot d^2 \cdot n^{2d-2}=
    d^2 n^{2d - 1},
  \end{equation*}
  and for any $\{T, T'\} \in \binom{V^d}{2}$,
  $\Pr(\{T, T'\} \subseteq \mathcal{F}') = p^2 n^{2 - 2d}$.
  Therefore, by the linearity of expectation,
  $\E[|\mathcal{O}_{\mathcal{F}'}|] < d^2 p^2 n$.
  So, by Markov inequality, 
  \begin{equation*}
    \Pr\left( 
    \left|\mathcal{O}_{\mathcal{F}'} \right| \ge (d^2 + 1) p^2 n 
    \right) \le 
    \frac{d^2}{d^2 + 1}.
  \end{equation*}
  Note that $\E[|\mathcal{F}'|] = p n$ and 
  $p n \ge \E[|f(S) \cap \mathcal{F}'|] \ge a p n$ for every $S \in \binom{V}{m}$.
  Therefore, by the Chernoff inequality,
  $
  \Pr(|\mathcal{F}'| \ge b' n) \le 
  e^{-\varepsilon^2 n/3}
  $
  and, since 
  \begin{equation*}
    a p - c' =
    \frac{a b}{d} - a \varepsilon - 
    (d^2+1)\left(\frac{b}{d} - \varepsilon\right)^2 - c \ge
    \frac{a b}{d} - 2 b^2 - c - a \varepsilon
    > \varepsilon,
  \end{equation*} 
  $\Pr(|\mathcal{F}' \cap f(S)| \le c' n) < e^{-\varepsilon^2 n/3}$
  for every $S \in \binom{V}{m}$.
  Therefore, for sufficiently large $n_0$,
  \begin{equation*}
    \Pr\left( 
    \left|\mathcal{O}_{\mathcal{F}'} \right| \ge (d^2 + 1) p^2
    \right) 
    +
    \Pr(|\mathcal{F}'| \ge b' n) 
    +
    \sum_{S \in \binom{V}{m}} \Pr(|\mathcal{F}' \cap f(S)| \le c' n)
    < 1.
    \qedhere
  \end{equation*} 
\end{proof}

Let $\pairs := V^2 - \{(x,x) : x \in V\}$.
For any $(x,y) \in \pairs$, 
call $(a,b,c,d) \in V^4$ 
an $(x,y)$-\emph{absorber} if 
$abcd$ is a proper anti-directed path and
$axcbyd$ is a proper anti-directed path (see Figure~\ref{fig:absorber}) and call $(a,b) \in V^2$ an $(x,y)$-\emph{connector} if 
$xaby$ is an anti-directed path where $(a,b)$ is an edge (note that specifying one edge dictates the directions of all the other edges).

Note that if $(x',x), (y, y')\in \vec{E}(D)$ and
$(a,b)$ is an $(x,y)$-connector disjoint from $\{x',y'\}$ 
then $x'xabyy'$ is an anti-directed path.

For all $(x,y)\in \pairs$, let $f_{\text{abs}}(x,y)=\{T \in V^4 : T \text{ is an $(x,y)$-absorber}\}$ and $f_{\text{con}}(x,y)=\{T \in V^2 : T \text{ is an $(x,y)$-connector}\}$.

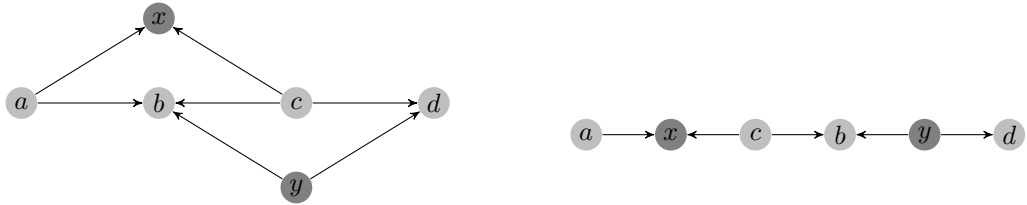
\begin{figure}[h]
  \begin{tikzpicture}[
      >=stealth',
      pil/.style={
	->,
	thick,
	shorten <=2pt,
	shorten >=2pt,
      }
      scale=0.5,
      avertex/.style={circle,fill=gray,minimum size=12pt},
      pvertex/.style={circle,fill=lightgray,minimum size=12pt},
      font=\small,minimum size=10pt,
    inner sep=1pt]
    \matrix[row sep=20pt,column sep=20pt] {
      & & & \node [avertex] (x) {$x$}; & & & & & \\
      & \node [pvertex] (a) {$a$}; & 
      & \node [pvertex] (b) {$b$}; &
      & \node [pvertex] (c) {$c$}; & 
      & \node [pvertex] (d) {$d$}; \\
      & & & & & \node [avertex] (y) {$y$}; & & \\
    };
    \draw [->] (a) to (b);
    \draw [<-] (b) to (c);
    \draw [->] (c) to (d);
    \draw [->] (a) to (x);
    \draw [->] (c) to (x);
    \draw [<-] (b) to (y);
    \draw [<-] (d) to (y);
  \end{tikzpicture}
  \begin{tikzpicture}[
      >=stealth',
      pil/.style={
	->,
	thick,
	shorten <=2pt,
	shorten >=2pt,
      }
      scale=0.5,
      avertex/.style={circle,fill=gray,minimum size=12pt},
      pvertex/.style={circle,fill=lightgray,minimum size=12pt},
      font=\small,minimum size=10pt,
    inner sep=1pt]
    \matrix[row sep=20pt,column sep=20pt] {
      & & & & & & & & \\
      & \node [pvertex] (a) {$a$};
      & \node [avertex] (x) {$x$};
      & \node [pvertex] (c) {$c$};
      & \node [pvertex] (b) {$b$};
      & \node [avertex] (y) {$y$};
      & \node [pvertex] (d) {$d$}; \\
      & & & & & & & \\
    };
    \draw [->] (a) to (x);
    \draw [<-] (x) to (c);
    \draw [->] (c) to (b);
    \draw [<-] (b) to (y);
    \draw [->] (y) to (d);
  \end{tikzpicture}
  \caption{$(a, b, c, d)$ is an $(x,y)$-absorber}\label{fig:absorber}
\end{figure}

\begin{claim}\label{manyabsorbers} Let $D$ satisfy the conditions of Lemma \ref{absorbpath}.  For all $(x,y)\in \pairs$ we have 
\begin{enumerate}
\item $|f_{abs}(x,y)|\geq \alpha^{12} n^4$ and 

\item $|f_{\text{con}}(x,y)|\geq \alpha^3n^2$.
\end{enumerate}
\end{claim}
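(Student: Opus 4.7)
The plan is to unpack the edge conditions defining $(x,y)$-connectors and $(x,y)$-absorbers, and then count them by combining the non-extremality bound from Observation~\ref{edgeobs} with the max-cut refinement from Lemma~\ref{maxcut}.

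For part (ii), tracing through the anti-directed path $xaby$ with the designated edge $(a,b)$ forces the orientations $a\to x$, $a\to b$, and $y\to b$. Thus an $(x,y)$-connector is precisely an edge $(a,b)$ of $D$ with $a\in N^-(x)\setminus\{y\}$ and $b\in N^+(y)\setminus\{x\}$. Since $\delta^0(D)\ge (1-\ep)n$ and $\ep\ll\alpha$, each of these two neighborhoods contains a subset of size $\lceil(1-\alpha/2)n\rceil$, so Observation~\ref{edgeobs} applied to such subsets yields at least $\frac{\alpha^2}{2}n^2$ edges, hence at least $\alpha^3 n^2$ connectors.

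For part (i), spelling out the conditions that $abcd$ and $axcbyd$ are both proper anti-directed paths forces $a,c\in N^-(x)$, $b,d\in N^+(y)$, together with the three edges $a\to b$, $c\to b$, $c\to d$ (so the $x,y$ edges are determined). In other words, an $(x,y)$-absorber is exactly a length-$3$ path $a\,b\,c\,d$ in the bipartite digraph whose classes are $N^-(x)$ and $N^+(y)$ and whose edges are directed from the first class to the second. The bound from part (ii) gives $\vec{e}(N^-(x),N^+(y))\ge \frac{\alpha^2}{2}n^2\ge \frac{\alpha^2}{8}\,|N^-(x)||N^+(y)|$, so I apply Lemma~\ref{maxcut}(i) with $c=\alpha^2/8$ to obtain disjoint $X'\subseteq N^-(x)$ and $Y'\subseteq N^+(y)$ with $\delta^+(X',Y'),\,\delta^-(Y',X')\ge \frac{\alpha^2}{64}(1-\ep)n \ge \frac{\alpha^2}{100}n$. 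Inside $X'\cup Y'$ I then greedily build the absorber: pick $a\in X'\setminus\{y\}$, then $b\in (N^+(a)\cap Y')\setminus\{x\}$, then $c\in (N^-(b)\cap X')\setminus\{a,y\}$, and finally $d\in (N^+(c)\cap Y')\setminus\{b,x\}$. Disjointness of $X'$ and $Y'$ together with the explicit exclusions ensures that $a,b,c,d,x,y$ are all distinct. At each step there are at least $\frac{\alpha^2}{100}n-O(1)\ge \frac{\alpha^2}{200}n$ valid choices, so the number of absorbers is at least $\bigl(\frac{\alpha^2}{200}n\bigr)^4\ge \alpha^{12}n^4$, for $\alpha$ small and $n$ large.

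There is no serious obstacle: the conceptual content is recognizing that an $(x,y)$-absorber is merely a bipartite $3$-path between $N^-(x)$ and $N^+(y)$, so that Lemma~\ref{maxcut}(i) converts the $\Omega(n^2)$ edge bound coming from non-extremality into the quasi-minimum-degree bipartite subgraph in which the four greedy picks are immediate. The only small care needed is removing the $O(1)$ forbidden vertices at each pick, which is easily absorbed by the final inequality.
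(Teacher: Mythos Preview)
Your proof is correct and follows essentially the same route as the paper: set $A=N^-(x)$, $B=N^+(y)$, use Observation~\ref{edgeobs} to get $\vec e(A,B)\ge \frac{\alpha^2}{2}n^2$ (giving (ii) immediately), then feed this into Lemma~\ref{maxcut}(i) to obtain disjoint $A'\subseteq A$, $B'\subseteq B$ with $\delta^+(A',B'),\,\delta^-(B',A')\gg \alpha^3 n$, and finally count $4$-tuples greedily. The paper picks the vertices in the order $(c,b)$ then $a$ then $d$ with the slightly better constant $\alpha^3 n$ per step, while you pick $a,b,c,d$ in order with $\frac{\alpha^2}{200}n$ per step; both reach $\alpha^{12}n^4$ once $\alpha\ll 1$, and your bookkeeping for distinctness from $x,y$ is in fact more explicit than the paper's.
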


\begin{proof}
Let $(x,y)\in \pairs$ and let $A=N^-(x)$ and $B=N^+(y)$.  

\begin{enumerate}

\item By Observation \ref{edgeobs} and Lemma \ref{maxcut}, there exists
  $A'\subseteq A$ and $B'\subseteq B$ such that $A'\cap B'=\emptyset$ and
  $\delta^+(A', B'), \delta^-(B', A')\geq \frac{\alpha^2}{16}(1-\ep)n\ge 
  \alpha^3n + 1$.  For all $(c,b)\in \vec{E}(A',B')$, we have
  $|N^-(b)\cap A'|\geq \alpha^3n + 1$ and $|N^+(c)\cap B'|\geq \alpha^3n + 1$.  So
  there are more than $(\alpha^3n)^2$ choices for $(b,c)$, $\alpha^3n$ choices for $a$ and $\alpha^3n$ choices for $d$, i.e. $|f_{abs}(x,y)|\geq \alpha^{12} n^4$.

\item Similarly, by Observation \ref{edgeobs}, we have 
  $\vec{e}(A, B)\geq \frac{\alpha^2}{2}n^2\geq \alpha^3n^2$, each of which is a connector.
 
\end{enumerate}

\end{proof}

\begin{claim}[Connecting-Reservoir]\label{reservoir}
For all $0<\gamma\ll \alpha$ and $D' \subseteq D$ such that $|D'| \ge (2
- \lambda)n$, there exists a set of pairwise disjoint ordered pairs $\mathcal{R}$ such that
if $R=\cup_{(a,b)\in \mathcal{R}}\{a,b\}$, then
$R\subseteq V(D')$, $|R|\leq \gamma n$
and for all distinct $x,y\in V(D)$, 
$|f_{con}(x,y)\cap \mathcal{R}| \geq \gamma^2 n$.
\end{claim}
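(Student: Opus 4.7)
The plan is to invoke Lemma~\ref{generalabsorbing} with $m = d = 2$, ground set $V := V(D)$ (so the lemma's $n$ corresponds to $2n$ here), and, for each ordered pair $(x,y) \in \pairs$,
\[
  f(x,y) \;:=\; f_{\text{con}}(x,y) \cap V(D')^2.
\]
Because the proof of Lemma~\ref{generalabsorbing} uses its indexing set $\binom{V}{m}$ only through a union bound over polynomially many events, it applies verbatim when $\binom{V}{m}$ is replaced by $\pairs$.

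To check the hypothesis of the lemma, I would first apply Claim~\ref{manyabsorbers}(ii) to get $|f_{\text{con}}(x,y)| \geq \alpha^3 n^2$. Since $|V(D) \setminus V(D')| \leq \lambda n$, at most $2 \cdot \lambda n \cdot 2n = 4 \lambda n^2$ of these connectors have a coordinate outside $V(D')$, so using the ambient hierarchy $\lambda \ll \alpha$ inherited from Lemma~\ref{absorbpath},
\[
  |f(x,y)| \;\geq\; \alpha^3 n^2 - 4 \lambda n^2 \;\geq\; \tfrac{\alpha^3}{2} n^2 \;\geq\; \tfrac{\alpha^3}{8} (2n)^2.
\]
Thus I may take $a := \alpha^3 / 8$ in Lemma~\ref{generalabsorbing}.

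Next, I would choose $b := \gamma / 2$ and $c := \gamma^2$. Since $\gamma \ll \alpha$, short arithmetic gives $b < a/(2d) = \alpha^3/32$ and $c < 2b(a/(2d) - b)$, so the lemma produces a family $\mathcal{R}$ of at most $b \cdot (2n)/d = \gamma n / 2$ ordered pairs in $V(D')^2$ with pairwise disjoint supports, satisfying $|f(x,y) \cap \mathcal{R}| \geq c \cdot 2n = 2 \gamma^2 n \geq \gamma^2 n$ for every $(x,y) \in \pairs$. Setting $R := \bigcup_{(a,b) \in \mathcal{R}} \{a,b\}$ yields $R \subseteq V(D')$ and $|R| \leq 2|\mathcal{R}| \leq \gamma n$, as required.

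I expect no substantive obstacle here: once Lemma~\ref{generalabsorbing} is in hand, the entire argument is book-keeping. The only point requiring care is that the chosen connectors must lie inside $V(D')$ while the ``many connectors'' conclusion must still hold for every pair $(x,y) \in V(D)$; intersecting $f_{\text{con}}(x,y)$ with $V(D')^2$ at the outset costs only $O(\lambda n^2)$ connectors, which is absorbed by the hierarchy $\gamma, \lambda \ll \alpha$.
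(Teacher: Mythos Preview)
Your proposal is correct and follows essentially the same route as the paper: restrict each $f_{\text{con}}(x,y)$ to $V(D')^2$, observe that only $O(\lambda n^2)$ connectors are lost, and feed the resulting lower bound into Lemma~\ref{generalabsorbing} with $d=2$ and suitable constants to extract $\mathcal{R}$. Your bookkeeping is in fact slightly more explicit than the paper's (you track the factor-of-$2$ between $n$ and $|V|=2n$ and flag the ordered-versus-unordered indexing issue), but the argument is the same.
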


\begin{proof}
For every $(x,y) \in \mathcal{P}$
\begin{equation*}
  |\{(a,b) \in f_{con}(x,y) : a,b \in V(D')\}| \geq 
  |f_{con}(x,y)| - |D - D'|n \ge \alpha^3n^2/2.
\end{equation*}
Therefore, we can apply Lemma \ref{generalabsorbing}
to obtain a set $\mathcal{R}$ of disjoint good ordered pairs such that
$|\mathcal{R}|\leq \gamma n/2$ and $|f_{con}(x,y)\cap \mathcal{R}|\geq
\gamma\alpha^3n/4 - 2\gamma^2n \ge \gamma^2n$ 
and $\mathcal{R} \subseteq V(D')^2$.
\end{proof}

Now we prove Lemma \ref{absorbpath}.

\begin{proof}

Since $|f_{abs}(x,y)\cap \mathcal{P}(V')|\geq \alpha^{12} n^4$ we apply Lemma
\ref{generalabsorbing} to $D$ obtain a set $\mathcal{A}$ of disjoint good
$4$-tuples $\{A_1, \dots, A_\ell\}$ such that $|\mathcal{A}|\leq \lambda n/8$
and $|f_{abs}(x,y)\cap \mathcal{A}|\geq \lambda \alpha^{12}n/8 -
2(\lambda/2)^2n \geq \lambda^2n$.  Let $A=\cup_{(a,b,c,d)\in \mathcal{A}}\{a,b,c,d\}$ and note that $|A|\leq \lambda n/2$.

Let $(a_i, b_i, c_i, d_i) := A_i$ for every $i \in [l]$, so
$a_ib_ic_id_i$ is a proper
ADP.  Note that there are less than $|A|n$ ordered pairs that contain a vertex
from $A$, so since $\lambda\ll \alpha$, we can greedily choose vertex 
disjoint $(x_i, y_i)\in f_{con}(d_i, a_{i+1})$ for each $i \in [l-1]$ such that 
$x_i,y_i \notin A$.  Set $P^* :=
A_1x_1y_1A_2x_2y_2A_2 \dotsc A_{l-1}x_{l-1}y_{l-1}A_l$ and note that $|P^*|
\leq \lambda n$ and $|P^*|$ is a proper ADP.  

To see that $P^*$ has the desired property, let $W\subseteq V\setminus V(P^*)$
such that $2w=|W|\leq \beta n$.  Arbitrarily partition $W$ into pairs and
since $\beta\ll \lambda$, we can greedily match the disjoint pairs from $W$
with $4$-tuples in $\mathcal{A}$.  By the way we have defined an
$(x,y)$-absorber, $D[V(P')\cup W]$ contains a spanning proper anti-directed path starting with an out-edge from $a_1$ and ending with an in-edge to $d_\ell$.

\end{proof}

\subsection{Covering}

The main challenge in the proof of Lemma \ref{longpath} is to show that if a maximum length anti-directed path is not long enough, then we can build a constant number of vertex disjoint anti-directed paths whose total length is sufficiently larger.

\begin{claim}
 \label{clm:covering} 

Under the conditions of Lemma \ref{longpath}, suppose $P^*$ is a proper anti-directed path with $|P^*|\leq \lambda n$.  For all $R\subseteq V(D-P^*)$ with $|R|\leq \beta^2n$, if $P$ is a proper anti-directed path in $D-R$ with beginning segment $P^*$ such that $|P| < (2 - \beta)n$, then there exist disjoint proper anti-directed paths
  $Q_1, \dotsc, Q_r \subseteq D-R$, such that $r \le 6$,
  $Q_1$ contains $P^*$ as an initial segment and
  \begin{equation*}
    |Q_1| + \dotsm |Q_r| \ge |P| + \ep \ceiling{\frac{1}{4}\log n}.
  \end{equation*}
\end{claim}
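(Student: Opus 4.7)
My plan is to adapt Pósa's rotation-extension technique to proper anti-directed paths, keeping $P^*$ as a fixed initial segment of $Q_1$ and modifying the path only near the right (sink-type) endpoint. Let $W := V(D) \setminus (V(P) \cup R)$, so $|W| \ge (\beta - \beta^2)n$ is linear in $n$.

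The core step is a \emph{rotation-extension round}. Given a proper ADP $P'$ of length $|P|$ extending $P^*$ with right endpoint $u$, I first check whether $u$ has an in-neighbor $w \in W$. If so, the semi-degree condition gives an out-neighbor $w' \in W \setminus \{w\}$ of $w$ (since $|V(P') \cup R| < 2n$ while $d^+(w) \ge (1-\ep)n$), and we extend $P'$ to a proper ADP of length $|P|+2$. If no such $w$ exists, then $N^-(u) \subseteq V(P) \cup R$, so at least $(1-\ep-\beta^2)n$ in-neighbors of $u$ lie in $V(P)$. For each in-edge $(v_j, u)$ with $v_j$ source-type in $P'$ (i.e., odd-indexed) and $v_j \neq v_{|P'|-1}$, a parity-preserving rotation rearranges $P'$ into a new proper ADP of the same length whose right endpoint is the sink-type path-neighbor of $v_j$. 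Collect the new endpoints into a set $S_1$, and iterate on each of them to produce $S_2, S_3, \dotsc$. A Pósa-style analysis using the semi-degree bound, together with Observation~\ref{edgeobs} applied once $|S_k|$ is forced near $n$, shows that after $O(\log n)$ rounds either some endpoint admits a direct extension or non-extremality forces one; in every case, the round ends with a proper ADP of length $|P|+2$ extending $P^*$.

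Iterating the round $\ceiling{\ep \log n / 8}$ times yields a proper ADP of length at least $|P| + \ep \ceiling{\log n / 4}$ extending $P^*$, which I take as $Q_1$. The allowance $r \le 6$ is used in the rare rounds where an otherwise-valid rotation would rearrange vertices inside $P^*$: in such cases we detach a short anti-directed segment as a separate $Q_i$ disjoint from the reused $Q_1$, still lying in $D-R$, and continue with a slightly shortened $Q_1$. A case analysis on how rotated segments interact with $P^*$ shows that at most five detachments suffice throughout the procedure.

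\textbf{Main obstacle.} The main technical difficulties are threefold: (i) the parity constraint, since at a sink-type endpoint only in-edges from source-type path vertices yield valid rotations, roughly halving the effective rotation degree; (ii) proving that the set $S_k$ of reachable endpoints grows fast enough to force a direct extension into $W$, which requires either geometric growth to near-$n$ size or a padded invocation of the non-extremal hypothesis, because $|W| = \Theta(\beta n)$ is below the size threshold of Observation~\ref{edgeobs}; and (iii) controlling the number of detachments at only a constant, which demands a careful case analysis of how rotated segments can intersect the fixed prefix $P^*$.
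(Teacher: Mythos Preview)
Your rotation--extension scheme is a genuinely different route from the paper's argument. The paper does not try to extend $P$ at all; instead it chops $P' = P \setminus (P^* \cup \hat P)$ into blocks $P_i$ of length $2m = 2\lceil\tfrac14\log n\rceil$ and sets $T = V \setminus (V(P)\cup R)$. A pigeonhole argument (there are only $2^{2m} \ll |T|$ subsets of a block) locates, in most blocks, a set $X_i$ of size $\approx m$ with a large common in-neighbourhood $Y_i \subseteq T$. Then either some block receives unusually many edges from $T$ and can be replaced by a longer ADP through $T$; or $T$ has enough internal edges to host a long ADP by itself; or the complements $Z_i = V(P_i)\setminus X_i$ together have size $\approx n$ and non-extremality (Observation~\ref{edgeobs}) yields an ADP inside some $Z_i\cup Z_j$. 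In each case, removing one or two blocks from $P$ and adding these side-paths gives up to six disjoint proper ADPs of total length at least $|P| + \ep m$.

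Your proposal, by contrast, has a concrete gap at the extension step. You write that once $w \in W \cap N^-(u)$ is found, ``the semi-degree condition gives an out-neighbor $w' \in W\setminus\{w\}$ of $w$ (since $|V(P')\cup R| < 2n$ while $d^+(w) \ge (1-\ep)n$).'' This does not follow: to force an out-neighbour of $w$ into $W$ you would need $d^+(w) > |V(P')\cup R|$, but $|V(P')\cup R|$ may be as large as $(2-\beta+\beta^2)n$, far exceeding $(1-\ep)n$. In fact it is entirely consistent with the hypotheses (non-$\alpha$-extremal, $\delta^0 \ge (1-\ep)n$) that $\vec{e}(W,W)=0$, in which case no two-step extension into $W$ is ever available from any rotated endpoint. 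The paper's proof meets exactly this obstruction head-on: the case $\vec{e}(T,T) < \ep|T|^2$ is one of its main branches, and it is handled by the block/pigeonhole argument, not by extension. Since your iteration requires $\Theta(\log n)$ successful two-step extensions, a single failure already breaks the scheme; and a one-step extension followed by rotation at the new source-type endpoint runs into the identical problem on the very next step.

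The remaining claims --- that $|S_k|$ grows to near $n$ in $O(\log n)$ rounds despite the parity restriction, and that at most five ``detachments'' suffice over the whole procedure --- are also left as bare assertions. The second looks especially hard to control: a pivot landing inside $P^*$ can occur in any of the $\Theta(\log n)$ rounds, and you give no mechanism preventing this from happening more than a constant number of times.
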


First we show how this implies Lemma \ref{longpath}.

\begin{proof}
Let $n$ be large enough so that we can apply Claim \ref{reservoir} and so that if 
 $m:=\ceiling{\frac{1}{4}\log n}$, then 
  \begin{equation}\label{eq:large_n}
  n\geq \frac{4m2^{2m}}{\ep^2\beta}
  \text{ and }
  m > 10\beta^{-4}\varepsilon^{-1}.
  \end{equation}

Let $P^*$ be a proper anti-directed path with $|P^*|\leq \lambda n$.  Let
$D'=D-P^*$.  Now apply Claim \ref{reservoir} to $D'$ with $\gamma=\beta^2$ to get
$\mathcal{R}$ and $R$ such that $|f_{con}(x,y)\cap \mathcal{R}|\geq \beta^4n$
for every $(x, y) \in \mathcal{P}$ and $|R| \le \beta^2 n$.

Let $P$ be a maximum length proper anti-directed path in $D-R$ that begins
with $P^*$.  If $|P|<(2-\beta)n$, then we apply Claim~\ref{clm:covering}.
Now connect $Q_1, \dotsc, Q_r$ into a longer path using at most $5$ pairs from
$\cpairs$.  Delete these vertices from $R$ and reset $\cpairs$.  We may repeat
this process as long as there are sufficiently many pairs remaining in
$\cpairs$.  On each step, $|f_{con}(x,y)\cap \mathcal{R}|$ may be reduced by
at most $5$.  However, in less than $\frac{2n}{\ep m}$ steps, we
will have a path of length greater than $(2-\beta)n$ in which
case we would be done.  By \eqref{eq:large_n}, $5 \cdot \frac{2n}{\ep m} <
\beta^4n$, so we can repeat the process sufficiently many times.  Once we have a path $P$ with $|P|\geq (2-\beta)n$, we use one more pair from $\cpairs$ to connect the endpoints of $P$ to form an anti-directed cycle $C$, which is possible since $|P|$ is even.  Note that $C$ contains $P^*$ as a segment by construction.

%

\end{proof}

\begin{proof}[Proof of Claim \ref{clm:covering}]
Let $n$ and $m$ be as in \eqref{eq:large_n}. 
Let $P$ be a maximum length proper ADP in $D - R$ containing $P^*$ as an
initial segment. Let $\hat{P}$ be the shortest segment of $P$ immediately
following $P^*$ so that $P':= v_1 \dotsc v_p = P - (P^*\cup \hat{P})$ is a multiple of $2m$; thus $|\hat{P}|<2m$. Set $T := V \setminus (V(P) \cup V(R))$, and 
  $P_i := v_{2m(i-1) + 1} \dotsc v_{2mi}$
  for $i \in [s]$ where $s := \frac{p}{2m}$ (which is an integer by the choice of $\hat{P}$).
  Note that $|P_i| = 2m$ for every $i \in [s]$.
  Assume $|T| > \beta n - |R| > \beta n / 2$.

\begin{claim}
    \label{clm:complete_subgraph}
    Let $c \in (\ep^2-1, 1)$, $d \in [\ep^2, 1+c)$, and 
    $b := \ceiling{(1 + c  - d)m}$.
    If $\rdeg{T}{P_i} \ge (1 + c)m|T|$, then there exists 
    $X_i \subseteq V(P_i)$ and $Y_i \subseteq T$ 
    such that
    $|X_i| = b$,
    $|Y_i| \ge 2m$ and 
    $X_i \subseteq \outn{y}$ 
    for every $y \in Y_i$. In particular, $D[V(P_i)\cup T]$ contains a proper anti-directed path on $2b$ vertices.
  \end{claim}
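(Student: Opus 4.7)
The plan is a standard double-counting argument combined with a Stirling estimate. For each $y \in T$ set $d_y := |N^+(y) \cap V(P_i)|$, so the hypothesis gives $\sum_y d_y \ge (1+c)m|T|$. For each $X \in \binom{V(P_i)}{b}$ let $Y(X) := \{y \in T : X \subseteq N^+(y)\}$. Counting pairs $(X, y)$ with $y \in Y(X)$ in two ways and invoking convexity of $t \mapsto \binom{t}{b}$ (Jensen's inequality),
\[
\sum_{X} |Y(X)| \;=\; \sum_{y \in T} \binom{d_y}{b} \;\ge\; |T|\binom{(1+c) m}{b},
\]
so some $X_i \in \binom{V(P_i)}{b}$ attains $|Y(X_i)| \ge |T|\,\binom{(1+c)m}{b}/\binom{2m}{b}$.

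To show this is at least $2m$, I apply Stirling's formula to each of the four factorials in the ratio; because $(1+c) + (1-c+d) - d - 2 = 0$, the $m\log m$ terms cancel and we obtain $\log\!\bigl(\binom{(1+c)m}{b}/\binom{2m}{b}\bigr) = m\,g(c,d) + O(\log m)$, where $g(c,d) := (1+c)\log(1+c) + (1-c+d)\log(1-c+d) - d\log d - 2\log 2$. A short calculus check shows $g + 2\log 2$ is increasing in $d$ (its $d$-derivative equals $\log((1-c+d)/d) > 0$) and is minimized in $c$ at $c = d/2$; evaluating at $(c,d) = (\ep^2/2, \ep^2)$ gives the uniform lower bound $g(c,d) + 2\log 2 \ge 2\ep^2 \log(1/\ep)$ on the admissible domain. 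Combined with $|T| > \beta n/2 \ge 2m \cdot 2^{2m}/\ep^2$ (from $|T| > \beta n - |R|$ and $n \ge 4m\,2^{2m}/(\ep^2\beta)$ in \eqref{eq:large_n}), this yields
\[
|Y(X_i)| \;\ge\; \frac{2m}{\ep^2} \cdot \exp\!\bigl(m(g(c,d) + 2\log 2) - O(\log m)\bigr),
\]
which comfortably exceeds $2m$ once $m$ is large enough to absorb the Stirling error; this is automatic for $n$ (hence $m$) sufficiently large.

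Setting $Y_i := Y(X_i)$ proves the first assertion: $|X_i| = b$, $|Y_i| \ge 2m$, and $X_i \subseteq N^+(y)$ for every $y \in Y_i$. For the ``in particular'' clause, pick any $b$ vertices $y_1, \dots, y_b \in Y_i$ and any enumeration $x_1, \dots, x_b$ of $X_i$: every edge $y_j \to x_k$ exists, so the walk $y_1 x_1 y_2 x_2 \cdots y_b x_b$ uses only such edges and consecutive edges alternate direction, starting with an out-edge at $y_1$ and ending with an in-edge into $x_b$---a proper anti-directed path on $2b$ vertices. The main obstacle is the quantitative analysis of $g + 2\log 2$: its lower bound $2\ep^2 \log(1/\ep)$ is small, so care is needed to verify that the linear-in-$m$ exponent dominates the Stirling error $O(\log m)$ together with the $2\log(1/\ep)$ deficit coming from $|T|$; once this is in hand, everything else is routine bookkeeping.
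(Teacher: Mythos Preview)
Your argument is correct, but it works considerably harder than the paper does.  The paper avoids both Jensen and Stirling: it first shows by a one-line averaging that $T' := \{y \in T : \deg^+(y, P_i) \ge b\}$ satisfies $|T'| \ge (d/2)|T|$, and then pigeonholes over the $\binom{2m}{b} < 2^{2m}$ possible $b$-subsets of $V(P_i)$, using $|T'| \ge (\ep^2/2)|T| \ge 2m\cdot 2^{2m}$ from \eqref{eq:large_n}.  Your route via Jensen yields the sharper lower bound $\sum_X |Y(X)| \ge |T|\binom{(1+c)m}{b}$, but you then spend the bulk of the proof on Stirling estimates and a calculus minimisation of $g(c,d)+2\log 2$ just to show the ratio $\binom{(1+c)m}{b}/\binom{2m}{b}$ exceeds $2m/|T|$.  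Even within your own framework this is unnecessary: since $b \le (1+c)m$ (for $m > 1/\ep^2$) one has $\binom{\lfloor(1+c)m\rfloor}{b} \ge 1$ and $\binom{2m}{b} \le 2^{2m}$, so immediately $|Y(X_i)| \ge |T|/2^{2m} \ge 2m/\ep^2$.  The Stirling analysis is therefore correct but superfluous; it also carries a small unaddressed issue about whether the $O(\log m)$ error is uniform when $b$ or $2m-b$ is $O(1)$, which the stated domain allows (e.g.\ $c$ close to $1$ or $d$ close to $1+c$), and about interpreting $\binom{(1+c)m}{b}$ when $(1+c)m\notin\mathbb{Z}$.  Your treatment of the ``in particular'' clause matches the paper's.
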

  
\begin{proof}
Let $T'=\{v\in T:\deg^+(v, P_i)\geq b\}$ and since 
\begin{equation*}
(1+c)m|T| \leq \vec{e}(T, P_i)\leq (|T|-|T'|)(b-1)+|T'|(2m - b + 1)
\leq|T|(1 + c - d)m + |T'|2m
\end{equation*}
we have $|T'|\geq \frac{d}{2}|T|$. 
Together with  \eqref{eq:large_n} this gives us
    \begin{equation*}
      |T'|\geq \frac{d}{2}|T| \ge \ep^2\beta n/2 \ge 2m 2^{2m} > 
      2m \binom{2m}{b}, 
    \end{equation*}
    which by the pigeonhole principle implies that
    there exists 
    $X_i \subseteq V(P_i)$ with $|X_i|= b$ and 
    $Y_i \subseteq T'$ such that
    $|Y_i| \ge 2m$ and
    $X_i \subseteq N_H(y)$ for every $y \in Y_i$.
  \end{proof}

  By Claim~\ref{clm:complete_subgraph}, 
  if $\rdeg{T}{P_i} \ge (1 + \ep)|T|m$ there 
  exists a proper anti-directed path $Q_3$ of length
  \begin{equation*}
    2\ceiling{(1 + \ep - \ep^2)m}>(2 + \ep)m \text{ in } D[T \cup P_i].
  \end{equation*}
  Letting 
  $Q_1 := P^*\hat{P}P_1 \dotsm P_{i-1}$ and
  $Q_2 := P_{i+1} \dotsm P_{q}$ then satisfies the condition
  of the lemma.
  Therefore, we can assume that,
  \begin{equation}
    \label{eq:deg_T_P_i}
    \rdeg{T}{P_i} < (1 + \ep)|T|m
    \text{ for every $i \in [s]$}.
  \end{equation}

  We can also assume that
  \begin{equation}
    \label{eq:deg_T}
    \rdeg{T}{T} < \ep |T|^2.
  \end{equation}
  Otherwise by Lemma \ref{maxcut}.(ii) there exists a proper
  anti-directed path $Q_2$ of length $\frac{\ep}{4}|T| \geq \ep m$ in 
  $D[T]$.  Then $Q_1 := P$ and $Q_2$ satisfy the condition of the
  lemma.

  So \eqref{eq:deg_T} implies that
  \begin{equation}
    \begin{split}
    \label{eq:deg_to_P'}
    \rdeg{T}{P'} & \ge 
    (1 - \ep)n|T| - (|P^*|+|\hat{P}|+|R|)|T| - \rdeg{T}{T} \\
    &\ge (1-\ep-\lambda-\beta^2)n|T|-2m|T| -\ep |T|^2 \ge (1 - 2\lambda)n|T|
  \end{split}
  \end{equation}
  Let $\lambda\ll \sigma\ll \alpha$ and let 
  \begin{equation*}
    I := \{ i \in [s] : \rdeg{T}{P_i} \ge (1 - \sigma)|T|m \}.
  \end{equation*}
  By \eqref{eq:deg_T_P_i} and \eqref{eq:deg_to_P'},
  \begin{equation*}
    (1 - 2\lambda)n|T| \le \rdeg{T}{P'} \le 
    (1 - \sigma)m(s-|I|)|T| + (1 + \ep)m|I||T|\leq (1-\sigma)n|T|+(\sigma+\ep)m|I||T|
  \end{equation*}
  which implies that 
  $m|I| \ge \frac{\sigma - 2\lambda}{\sigma + \ep}n > (1 - \frac{\alpha}{2})n$.
  Also note that $n \ge |P|/2 \ge m|I|$.
  
  For every $i \in I$, let $X_i \subseteq P_i$ 
  and $Y_i \subseteq T$ be the
  sets guaranteed by Claim~\ref{clm:complete_subgraph}
  with $c := -\sigma$, $d := \sigma$ and
  $b := \lceil (1 - 2 \sigma)m \rceil$.
  Let $Z_i := V(P_i) \setminus X_i$ for $i \in [I]$ and
  let $Z := \bigcup_{i \in I} Z_i$.
  Note that $|Z_i| = 2m - b$ for every $i \in I$ so $|Z|=(2m-b)|I|$ and 
  \begin{equation*}
    \left(1+\frac{\alpha}{2}\right)n>(1 + 2 \sigma)n \ge (2m - b)|I| \ge m|I| > \left(1 - \frac{\alpha}{2}\right)n.
  \end{equation*}

  Therefore by Observation \ref{edgeobs}, $\rdeg{Z}{Z} \ge \frac{\alpha^2}{2}
  |Z|^2$. 
  Because
  \begin{equation*}
    \frac{\alpha^2}{2}\leq \frac{\rdeg{Z}{Z}}{|Z|^2} = 
    \frac{1}{|I|^2}\sum_{i \in I}\sum_{j \in I} \frac{\rdeg{Z_i}{Z_j}}{(2m - b)^2},
  \end{equation*}
  there exists $i, j \in I$ such that $\rdeg{Z_i}{Z_j} \ge \ec^2 (2m - b)^2/2$.
  Removing $P_i$ and $P_j$ divides $P$ into 
  three disjoint proper anti-directed paths (note that some of these paths may be empty).
  Label these paths $Q_1$, $Q_2$ and $Q_3$ so that $P^*\hat{P} \subseteq Q_1$.
  By Lemma \ref{maxcut}.(ii) there
  exists a proper anti-directed path $Q_4$ of length at least $(\ec^2/8) (2m - b) \ge (\ec^2/8)m$
  in $D[Z_i \cup Z_j]$.
  By Claim \ref{clm:complete_subgraph}, there also exists a proper 
  anti-directed path 
  $Q_5 \subseteq D[X_i \cup Y_i]$ such
  that $|Q_5| \ge 2(1 - 2\sigma)m$.

  If $i = j$ then $Q_4 \subseteq D[Z_i]$
  and $|Q_1| + |Q_2| + |Q_3| = |P| - 2m$.
  Therefore it is enough to observe that 
  $|Q_4| + |Q_5| \ge 2(1 - 2\sigma)m + (\ec^2/8) m \ge
  2m + \ep m$.

  If $i \neq j$, then 
  $Y_j' := Y_j \setminus V(Q_4)$
  has order at least $2m - b \ge m$.
  So there exists a proper anti-directed path
  $Q_6 \subseteq D[X_j \cup Y_j']$
  such that $|Q_6| \ge 2(1 - 2\sigma)m$.
  Since $|Q_1| + |Q_2| + |Q_3| = |P| - 4m$
  and $|Q_4| + |Q_5| + |Q_6| \ge 4(1 - 2\sigma)m + (\ec^2/8) m \ge
  4m + \ep m$, the proof is complete.
\end{proof}

\section{Extremal Case}

Let $0<\alpha\ll \beta\ll \gamma\ll 1$.  Let $D$ be a directed graph on $2n$ vertices with $\delta^0(D)\geq n$ and suppose that $D$ satisfies the extremal condition with parameter $\alpha$ and that $D$ is not isomorphic to $F_n^1$ or $F_n^2$.  We will first partition $V(D)$ in the preprocessing section, then we will handle the main proof.
In this section we sometime use $uv$ to denote the edge $(u,v)$.

\subsection{Preprocessing}

The point of this section is to make the following statement precise: If
$D$ satisfies the extremal condition, then $D$ is very similar to the digraph in Figure \ref{GenExtremal}.

\begin{proposition}\label{prop:preprocess}
If there exists an $\alpha$-extreme pair of sets $A,B\subseteq V(G)$, then there exists a partition $\{X_1',X_2',Y_1',Y_2',Z\}$ of $V(G)$ such that 
\begin{enumerate}
\item\label{close} $|Z'|\leq 3\alpha^{2/3}n$, 
  $||X_1'|-|X_2'||, ||Y_1'|-|Y_2'||\leq 3\alpha^{2/3}n$ and
\item 
$ \delta^0(X_{3-i}',X_i'), \delta^-(Y_{3-i}', X_i'), \delta^+(Y_i',
    X_i') \geq |X_i'|-2\alpha^{1/3}n \text{ and } \\
    \delta^0(Y_i',Y_i'),\delta^-(X_i', Y_i'), \delta^+(X_{3-i}', Y_i') \geq
    |Y_1'|-2\alpha^{1/3}n$ for $i = 1,2$.

\end{enumerate}

\end{proposition}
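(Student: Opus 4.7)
The plan is to take the partition suggested by the extremal family $F_{2n,k}$ and then prune a small exceptional set $Z'$. In $F_{2n,k}$ the pair $(A,B)=(X_1\cup Y_2,\,X_1\cup Y_1)$ witnesses the extremal condition with no ``bad'' edges, so given an arbitrary $\alpha$-extremal pair $(A,B)\subseteq V(D)$, I set
\begin{equation*}
X_1 := A\cap B,\quad Y_2 := A\setminus B,\quad Y_1 := B\setminus A,\quad X_2 := V(D)\setminus(A\cup B).
\end{equation*}
From $|X_1|-|X_2|=|A|+|B|-2n$ and $|Y_1|-|Y_2|=|B|-|A|$, both absolute differences are at most $2\alpha n$, which is absorbed into the $3\alpha^{2/3}n$ slack together with the size of $Z'$ constructed below.

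The ``easy half'' of the degree conditions follows directly from the $\alpha$-extremal hypothesis. For each $v\in A$, $\deg^+(v,B)\leq\alpha n$ gives $\deg^+(v,V\setminus B)\geq(1-\alpha)n$, and combined with $|X_2|+|Y_2|=|V\setminus B|\leq(1+\alpha)n$ this forces $\deg^+(v,X_2)\geq|X_2|-2\alpha^{1/3}n$ and $\deg^+(v,Y_2)\geq|Y_2|-2\alpha^{1/3}n$. The analogous argument for each $v\in B$ yields $\deg^-(v,X_2)\geq|X_2|-2\alpha^{1/3}n$ and $\deg^-(v,Y_1)\geq|Y_1|-2\alpha^{1/3}n$. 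This handles every condition in the proposition that pertains to an out-edge from an $A$-vertex or an in-edge to a $B$-vertex.

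The main obstacle is controlling the ``dual'' conditions: in-edges into $Y_2\subseteq V\setminus B$ and into $X_2$ coming from $A$, and symmetrically out-edges from $Y_1\subseteq V\setminus A$ and from $X_2$ going into $B$. The $\alpha$-extremal condition says nothing directly about any of these, so I appeal to $\delta^0(D)\geq n$ via double counting. From $\sum_{v\in A}\deg^+(v)\geq n|A|$ and $\vec{e}(A,B)\leq|A|\alpha n$,
\begin{equation*}
\sum_{v\in V\setminus B}\!\bigl(|A|-\deg^-(v,A)\bigr) = |A||V\setminus B|-\vec{e}(A,V\setminus B)\leq |A|\bigl(|V\setminus B|-(1-\alpha)n\bigr)\leq 3\alpha n^2,
\end{equation*}
so by Markov's inequality at most $\tfrac{3}{2}\alpha^{2/3}n$ vertices $v\in V\setminus B$ have $\deg^-(v,A)<|A|-2\alpha^{1/3}n$; call this set $Z_1$. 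A symmetric argument produces $Z_2\subseteq V\setminus A$ of size at most $\tfrac{3}{2}\alpha^{2/3}n$ capturing all $v\in V\setminus A$ with $\deg^+(v,B)<|B|-2\alpha^{1/3}n$. I then set $Z':=Z_1\cup Z_2$ (so $|Z'|\leq 3\alpha^{2/3}n$) and $X_i':=X_i\setminus Z'$, $Y_i':=Y_i\setminus Z'$.

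The remaining verification is mechanical. For $v\in (A\cup B)\setminus Z'$, the easy-half bounds give each required inequality against the unprimed sets. For $v\in X_2'$, the $Z_1$ bound yields $\deg^-(v,X_1)\geq \deg^-(v,A)-|Y_2|\geq|X_1|-2\alpha^{1/3}n$, and analogously the $Z_2$ bound yields $\deg^+(v,X_1)$ and $\deg^+(v,Y_1)$; the analogous conditions for vertices in $Y_1'$ and $Y_2'$ are handled by $Z_2$ and $Z_1$ respectively. Passing from any unprimed set $S$ to the primed set $S'=S\setminus Z'$ costs only $|Z'\cap S|$ in the degree, and this is exactly $|S|-|S'|$, so the $2\alpha^{1/3}n$ slack is preserved on both sides. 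The heart of the proof is the double-counting step that produces $Z_1$ and $Z_2$; everything else is careful bookkeeping using the size inequalities $|X_2|+|Y_2|,|X_2|+|Y_1|\leq(1+\alpha)n$.
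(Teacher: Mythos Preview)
Your proof is correct and follows essentially the same approach as the paper's: partition $V$ via the Venn diagram of $A$ and $B$, read off the ``easy'' degree conditions directly from $\Delta^+(A,B),\Delta^-(B,A)\le\alpha n$ together with $\delta^0(D)\ge n$, and then double-count $\vec{e}(A,\overline{B})$ and $\vec{e}(\overline{A},B)$ to bound the number of vertices in $\overline{B}$ (resp.\ $\overline{A}$) that fail the ``hard'' conditions, placing those into $Z'$. Your labeling swaps the roles of $X_1\leftrightarrow X_2$ and $Y_1\leftrightarrow Y_2$ relative to the paper, but the proposition is symmetric under that swap, so this is immaterial; the only other difference is that you define the exceptional sets via degrees into $A$ and $B$ while the paper defines them via degrees into the individual pieces, which is equivalent.

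One small arithmetic point: with your bound $\sum\le 3\alpha n^2$ you get $|Z_1|,|Z_2|\le\tfrac{3}{2}\alpha^{2/3}n$ and hence $|Z'|\le 3\alpha^{2/3}n$, but then $||X_1'|-|X_2'||\le 2\alpha n+|Z'\cap X_2|$ could be as large as $2\alpha n+3\alpha^{2/3}n$, just over the stated $3\alpha^{2/3}n$. Using the sharper $\sum\le 2(1+\alpha)\alpha n^2$ gives $|Z_1|,|Z_2|\le(1+\alpha)\alpha^{2/3}n$, and then $2\alpha n+2(1+\alpha)\alpha^{2/3}n<3\alpha^{2/3}n$ for $\alpha$ small, exactly as in the paper.
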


\begin{proof}

Let $A, B\subseteq V(D)$ such that $(1-\alpha)n\leq |A|,|B|\leq (1+\alpha)n$, $\Delta^+(A,B)\leq \alpha n$, and $\Delta^-(B,A)\leq \alpha n$.  
We have that 
\begin{align}
  \delta^+(A, \overline{B})&\geq (1-\alpha)n, \text{ and} \label{y1x2}\\
  \delta^-(B, \overline{A})&\geq (1-\alpha)n. \label{y2x2}
\end{align}
Set $\widetilde{X_1}=V\setminus(A\cup B)$, $\widetilde{X_2}=A\cap B$, 
$\widetilde{Y_1}=A\setminus B$,
$\widetilde{Y_2}=B\setminus A$. Note that
$\widetilde{Y_1} \cup \widetilde{X_2} = A$ and
$\widetilde{Y_2} \cup \widetilde{X_2} = B$.
Therefore, $||\widetilde{Y_1}| - |\widetilde{Y_2}|| \le 2\alpha n$, and
$||\widetilde{X_1}| - |\widetilde{X_2}|| \le 2\alpha n$,
because $|\widetilde{X_1}| - |\widetilde{X_2}| = |V| - |A| - |B|$.

Let 
\begin{align*}
  \hat{Y_1} = \{v\in \widetilde{Y_1}: 
  &\deg^-(v, \widetilde{X_2})<|\widetilde{X_2}|-\alpha^{1/3}n \text{ or } 
  \deg^-(v, \widetilde{Y_1})<|\widetilde{Y_1}|-\alpha^{1/3}n\}, \\
  \hat{Y_2} =\{v\in \widetilde{Y_2}: 
  &\deg^+(v, \widetilde{X_2})<|\widetilde{X_2}|-\alpha^{1/3}n \text{ or } 
  \deg^+(v, \widetilde{Y_2})<|\widetilde{Y_2}|-\alpha^{1/3}n\}, \\
  \hat{X_1} =\{v\in \widetilde{X_1}: 
  &\deg^-(v, \widetilde{Y_1})<|\widetilde{Y_1}|-\alpha^{1/3}n \text{ or } 
  \deg^+(v, \widetilde{Y_2})<|\widetilde{Y_2}|-\alpha^{1/3}n \text{ or } \\ 
  &\deg^0(v, \widetilde{X_2})<|\widetilde{X_2}|-\alpha^{1/3}n\},
\end{align*}
$\hat{B} = \hat{Y_1} \cup \hat{X_1}$ and
$\hat{A} = \hat{Y_2} \cup \hat{X_1}$.
Note that $\hat{B} \subseteq \overline{B}$
and $\hat{A} \subseteq \overline{A}$.
Now we show that each of these sets are small.

\begin{claim}\label{smallhats}
  $|\hat{Y_1}|,|\hat{Y_2}|,|\hat{X_1}|\leq 2\alpha^{2/3}n$ and 
  $|\hat{Y_1}|+|\hat{Y_2}|+|\hat{X_1}|\leq 3\alpha^{2/3}n$
\end{claim}

\begin{proof}

By \eqref{y1x2} and the definition of $\hat{X}_1, \hat{Y}_1$, we have 
\begin{equation*}
  |\widetilde{Y_1}\cup \widetilde{X_2}|(1-\alpha)n =
  |A|(1 - \alpha)n \leq 
  \vec{e}(A, \overline{B}) \leq 
  (|\overline{B}|-|\hat{B}|)|A|+|\hat{B}|(|A|-2\alpha^{1/3}n)
\end{equation*}

This implies 
\begin{equation*}
  |\hat{Y}_1\cup\hat{X}_1| =
  |\hat{B}| \leq 
  \frac{|A|(|\overline{B}|-(1-\alpha)n)}{2\alpha^{1/3}n} \leq
  \frac{(1+\alpha)n((1+\alpha)n-(1-\alpha)n)}{2\alpha^{1/3}n} = 
  (1 + \alpha)\alpha^{2/3}n
\end{equation*}

Now using \eqref{y2x2}, the same calculation 
(with the symbol $A$ exchanged with the symbol $B$) gives
that 
$|\hat{Y}_2\cup\hat{X}_1| = |\hat{A}| \leq (1 + \alpha)\alpha^{2/3}n$.

Thus $|\hat{Y_1}|+|\hat{Y_2}|+|\hat{X_1}|\leq 2(1+\alpha)\alpha^{2/3}n\leq 3\alpha^{2/3}n$.
\end{proof}

Let $X_1'= \widetilde{X_1} \setminus \hat{X}_1$, $X_2'= \widetilde{X_2}$,
$Y_i'=\widetilde{Y_i}\setminus
\hat{Y}_i$ for $i=1,2$, and $Z=\hat{X}_1\cup \hat{Y}_1\cup \hat{Y}_2$.  Note
that $|Z|\leq 3\alpha^{2/3}n$ and
$||X_1'|-|X_2'||, ||Y_1'|-|Y_2'||\leq 2\alpha n+2\alpha^{2/3}n<3\alpha^{2/3}n$. 
The required
degree conditions all follow from \eqref{y1x2} and \eqref{y2x2}; the
definitions of $\hat{X}_1$, $\hat{Y}_1$ and $\hat{Y}_2$; 
and Claim \ref{smallhats} .
\end{proof}

\subsection{Preliminary results}

The following facts immediately follow from the Chernoff bound 
for the hypergeometric distribution \cite{luczak}.

\begin{lemma}
  \label{lem:degree-chernoff}
  For any $\varepsilon > 0$, there exists $n_0$ such that if $D$ is a 
  digraph on $n \ge n_0$ vertices, $S \subseteq V(D)$,
  $m \le |S|$ and $c := m/|S|$ then 
  there exists $T \subseteq S$ of order $m$ such that for every $v \in V$
\begin{align*}
||N^{\pm}(v) \cap T| - c|N^{\pm}(v) \cap S||&\leq \varepsilon n  ~~\text{ and}\\ 
||N^{\pm}(v) \cap \left(S \setminus T\right)| - (1-c)|N^{\pm}(v) \cap S||&\leq \varepsilon n.
  \end{align*}
\end{lemma}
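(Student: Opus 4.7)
The plan is to prove this via the probabilistic method: sample $T$ uniformly at random from $\binom{S}{m}$ and show that with positive probability all four deviation bounds hold simultaneously. Once this is established, a suitable deterministic $T$ must exist, which is what the lemma asserts.

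First I would fix a vertex $v \in V(D)$ and a choice of sign for $N^{\pm}$, and consider the random variable $X := |N^{\pm}(v) \cap T|$. Since $T$ is a uniformly random $m$-subset of $S$, $X$ follows a hypergeometric distribution with population $|S|$, success parameter $|N^{\pm}(v) \cap S|$, and $m$ draws, so $\E[X] = c \, |N^{\pm}(v) \cap S|$. The cited Chernoff bound for the hypergeometric distribution (see \cite{luczak}) then yields
\begin{equation*}
  \Pr\bigl(|X - c|N^{\pm}(v) \cap S||\geq \varepsilon n\bigr) \le 2\exp\!\left(-\tfrac{2\varepsilon^2 n^2}{m}\right) \le 2\exp\!\left(-2\varepsilon^2 n\right),
\end{equation*}
using $m \le |S| \le n$ in the last step.

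Next I would take a union bound over the at most $n$ choices of $v$ and the two choices of sign in $N^{\pm}$, giving a total failure probability of at most $4n\exp(-2\varepsilon^2 n)$. Choosing $n_0$ large enough so that this quantity is strictly less than $1$ guarantees the existence of a $T$ satisfying the first inequality for every $v$ and both signs simultaneously. The second inequality is free: since $|N^{\pm}(v) \cap (S\setminus T)| = |N^{\pm}(v) \cap S| - |N^{\pm}(v) \cap T|$ and $(1-c)|N^{\pm}(v) \cap S| = |N^{\pm}(v) \cap S| - c|N^{\pm}(v) \cap S|$, the deviation of $|N^{\pm}(v) \cap (S\setminus T)|$ from its expectation equals, in absolute value, the deviation handled above.

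There is essentially no obstacle here; the only subtlety is confirming that the hypergeometric concentration bound applies directly and that the union bound is over a polynomial (in $n$) number of events, which is easily absorbed by the exponential tail. This makes the choice $n_0 = n_0(\varepsilon)$ possible and completes the proof.
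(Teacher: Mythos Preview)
Your proposal is correct and is precisely the argument the paper has in mind: the paper does not give a detailed proof but simply states that the lemma ``immediately follow[s] from the Chernoff bound for the hypergeometric distribution,'' which is exactly the random-$T$-plus-union-bound argument you wrote out. Your observation that the second inequality is automatic from the first is also on point.
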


We will need the following theorem and corollary and an additional lemma. 

\begin{theorem}[Moon, Moser \cite{MM}]
  \label{thm:moon_moser}
  If $G$ is a balanced bipartite graph on $n$ vertices such that
  for every $1 \le k \le n/4$
  there are less than $k$ vertices $v$ such that
  $\deg(v) \le k$ then $G$ has a Hamiltonian cycle.
\end{theorem}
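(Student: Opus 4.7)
The plan is to adapt Chv\'atal's classical closure argument to the bipartite setting. First, I would suppose for contradiction that $G$ satisfies the stated degree condition but has no Hamiltonian cycle, and pass to a bipartite edge-maximal non-Hamiltonian supergraph $G^*$ of $G$; since adding edges only raises vertex degrees, $G^*$ still satisfies the Moon--Moser hypothesis. Because $K_{m,m}$ with $m = n/2$ is Hamiltonian, $G^* \neq K_{m,m}$, so I can fix a non-edge $uv$ with $u \in A$ and $v \in B$. Edge-maximality forces $G^* + uv$ to contain a Hamiltonian cycle, which after deletion of $uv$ yields a Hamiltonian path $P = x_1 y_1 x_2 y_2 \cdots x_m y_m$ in $G^*$ with $x_1 = u$ and $y_m = v$.

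The core step is the standard rotation argument. If some index $j \in [m]$ satisfies both $u y_j \in E$ and $x_j v \in E$, I can reroute $P$ into the Hamiltonian cycle
\[
x_1\, y_j\, x_{j+1}\, y_{j+1} \cdots x_m\, y_m\, x_j\, y_{j-1}\, x_{j-1} \cdots y_1\, x_1,
\]
contradicting non-Hamiltonicity. Consequently the index sets $S := \{j : u y_j \in E\}$ and $T := \{j : x_j v \in E\}$ are disjoint subsets of $[m]$, and since $|S| = \deg(u)$ and $|T| = \deg(v)$ by the bipartite structure, this yields the key inequality $\deg(u) + \deg(v) \le m$.

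For the final step I would choose the non-edge $uv$ in $G^*$ maximizing $\deg(u) + \deg(v)$, and set $s := \deg(u) \le \deg(v) =: t$, so $s + t \le m$ and $s \le m/2$. By maximality every $y \in B \setminus N(u)$ satisfies $\deg(y) \le t$, producing $m - s \ge t$ vertices of degree at most $t$. The plan is then to translate this count into a violation of the Moon--Moser hypothesis (which is only active for $k \le n/4 = m/2$): when $t \le m/2$ the count directly contradicts the hypothesis at $k = t$; otherwise one applies the symmetric argument with the roles of $u$ and $v$ swapped to extract at least $s$ vertices in $A$ of degree $\le s$, contradicting the hypothesis at $k = s \le m/2$.

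The hard part will be making this last case analysis airtight: ensuring that the small-degree vertices exhibited by the rotation inequality can always be counted at a threshold within the admissible Moon--Moser range, and handling potential boundary situations such as $s = t = m/2$ where the two symmetric counts must be combined. This bookkeeping is essentially the bipartite version of Chv\'atal's closure theorem and carries the technical weight of the proof; once the small-degree vertex counts are pinned down, the contradiction with the hypothesis is immediate.
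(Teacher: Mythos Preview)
The paper does not give its own proof of this statement: Theorem~\ref{thm:moon_moser} is quoted from Moon and Moser~\cite{MM} as a black box and applied without argument, so there is nothing in the paper to compare your proposal against.

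That said, your plan is the standard proof of the Moon--Moser theorem (an Ore/Chv\'atal-type rotation argument adapted to the bipartite setting), and it is correct. The part you flag as ``hard'' is actually routine once you observe that $s \le t$ together with $s + t \le m$ forces $s \le m/2 = n/4$, so the symmetric count always lands in the admissible range for the hypothesis; no combination of the two counts is ever needed, and the boundary case $s = t = m/2$ is already handled by the first branch $t \le n/4$. The only minor bookkeeping point worth making explicit is that the hypothesis at $k = 1$ gives $\delta(G) \ge 2$, guaranteeing $s \ge 1$ so that the final contradiction is well-posed.
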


\begin{corollary}
  \label{cor:mm}
  Let $G$ be a $U,V$-bipartite graph on $n$ vertices
  such that $n$ is sufficiently large and $0 \le |U| - |V| \le 1$
  and let $C \ge 3$ be a positive integer.
  If $n$ is even, let $a \in U$ and $b \in V$
  and if $n$ is odd, let $a,b \in U$.
  If $\delta(G) > 2C$ and $\deg(v) > 2n/5$ for 
  all but at most $C$ vertices $v$ 
  then $G$ has a Hamiltonian path with ends $a$ and $b$.
\end{corollary}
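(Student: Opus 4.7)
The plan is to reduce both parity cases to Moon-Moser's Hamilton cycle theorem (Theorem~\ref{thm:moon_moser}) via auxiliary-vertex constructions.

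For $n$ odd, since $|U|=|V|+1$ and $a,b\in U$, I adjoin to $G$ a new vertex $z\in V$ with neighbors exactly $\{a,b\}$, producing a balanced bipartite graph $G^+$ on $n+1$ vertices. Because $\deg_{G^+}(z)=2$, every Hamilton cycle of $G^+$ uses both edges $za$ and $zb$, so deleting $z$ yields a Hamilton $a$-$b$ path in $G$. To verify Moon-Moser's hypothesis for $G^+$, note that $\delta(G)>2C\ge 6$ makes $z$ the only vertex of $G^+$ with degree $\le 2C$; hence for $1\le k\le 2C$ at most one vertex of $G^+$ has degree $\le k$ (strictly less than $k$ for $k\ge 2$, vacuous for $k=1$). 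For $k>2C$ I additionally count the at most $C$ exceptional vertices of $G$ with $\deg_G\le 2n/5$, giving at most $C+1<2C+1\le k$ (using $C\ge 3$). The remaining range $2n/5<k\le (n+1)/4$ is empty for $n$ large.

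For $n$ even ($|U|=|V|=n/2$, $a\in U$, $b\in V$), the direct analog would need two auxiliary degree-$2$ vertices and hence fail Moon-Moser at $k=2$. I instead apply Moon-Moser to $G$ itself (the hypothesis is verified as above) to obtain a Hamilton cycle $C=u_1 v_1\cdots u_{n/2}v_{n/2}u_1$ with $a=u_1$ and $b=v_j$. If $j\in\{1,n/2\}$ then $a$ and $b$ are consecutive on $C$ and deleting the cycle edge at $a$ not incident to $b$ finishes the proof. Otherwise I perform a bipartite $2$-switch on $C$: whenever the chords $u_1 v_j$ and $v_1 u_{j+1}$ both lie in $G\cup\{ab\}$, replacing the cycle edges $u_1 v_1$ and $v_j u_{j+1}$ by these two chords (with reversal of the intervening segment) produces a new Hamilton cycle on which $a$ and $b$ are consecutive, reducing to the previous sub-case. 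If $ab\notin E(G)$, the edge $ab=u_1 v_j$ is added temporarily and then excised from the final cycle to extract the $a$-$b$ path. The chord $v_1 u_{j+1}$ is furnished by the density condition that all but $C$ vertices have degree exceeding $2n/5$.

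The main obstacle is the $n$ even case: if the chord $v_1 u_{j+1}$ happens to be missing (because $v_1$ or $u_{j+1}$ is one of the at most $C$ exceptional low-degree vertices), a preliminary rotation of $C$ is needed to bring high-degree vertices into positions $1$ and $j+1$. A pigeonhole count based on the $2n/5$ degree bound should make this routine, ensuring termination after only a bounded number of rotations.
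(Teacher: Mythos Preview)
Your odd-$n$ argument is correct and in fact cleaner than the paper's. The paper handles both parities by a single device: it removes a set $R\ni a,b$ together with two deliberately chosen high-degree vertices $v_1,v_2$ (and, for $n$ even, one further vertex $v_0$), and applies Theorem~\ref{thm:moon_moser} to $G-R$ augmented by one new vertex whose neighbourhood is $N(v_1)\cap N(v_2)\setminus R$. Since $|N(v_1)\cap N(v_2)|>\tfrac{4n}{5}-\tfrac{n}{2}=\tfrac{3n}{10}$, this auxiliary vertex is itself high-degree, so the Moon--Moser hypothesis is immediate. Your degree-$2$ dummy works precisely because Moon--Moser tolerates a single such vertex; this is exactly why you cannot use two of them in the even case.

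Your even-$n$ argument, however, has a genuine gap. The $2$-switch requires the chord $v_1u_{j+1}\in E(G)$, and the density hypothesis does \emph{not} supply it: even when both $v_1$ and $u_{j+1}$ are non-exceptional, each has degree only $>2n/5=\tfrac{4}{5}\cdot\tfrac{n}{2}$, so each may miss up to $n/10$ vertices of the opposite side, and nothing prevents $u_{j+1}$ from being among those $v_1$ misses. The proposed ``preliminary rotation'' does not rescue this. The positions in question are the cycle-neighbours of the fixed vertices $a$ and $b$, so a cyclic relabelling changes nothing; and if you mean to first modify the Hamilton cycle by further $2$-switches until some cycle-neighbour of $a$ is adjacent in $G$ to some cycle-neighbour of $b$, that is precisely the statement to be proved and needs a real P\'osa rotation--extension argument, not the one-line pigeonhole you gesture at.

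A short repair that stays within your framework: for $n$ even, pick $v_0\in N(b)\setminus\{a\}\subseteq U$, delete $b$, and run your odd-$n$ argument in $G-b$ (now $|U|-|V\setminus\{b\}|=1$ with $a,v_0\in U$) to obtain a Hamilton $a$--$v_0$ path; then append the edge $v_0b$. Degrees drop by at most one, and your Moon--Moser verification still succeeds for $C\ge 3$.
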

\begin{proof}
  If $n$ is even then
  iteratively pick $v_0 \in N(b) - a$, $v_1 \in N(v_0) - b$ and
  $v_2 \in N(a) - b - v_1$
  and set $R = \{a, b, v_0, v_1, v_2\}$.
  If $n$ is odd then 
  iteratively pick $v_1 \in N(a)$ and $v_2 \in N(b) - v_1$. 
  and set $R = \{a, b, v_1, v_2\}$.
  In both cases, we can select $v_1, v_2$ to have degree 
  greater than $2n/5$.
  Applying Theorem~\ref{thm:moon_moser} to the graph formed by
  removing $R$ from the graph and adding a new vertex to $V$ which is adjacent to $N(v_1) \cap N(v_2) \setminus R$ completes the proof.
\end{proof}

\begin{definition}
Let $S$ be a star with $k$ leaves.  If every edge of $S$ is oriented away from the center, we say $S$ is a $k$-out star, if every edge is oriented towards the center, we say $S$ is a $k$-in star.
\end{definition}

\begin{lemma}\label{2stars}
Let $G$ be a directed graph on $n$ vertices and let $1\leq d\leq D\ll n$.  If $\delta^+(G)\geq d$ and
$\Delta^-(G)\leq D$, then $G$ has at least 
$\frac{(d-1)n-4(d-1+D)}{3(d+D-1)}$ disjoint $2$-in-stars together with two independent edges.
\end{lemma}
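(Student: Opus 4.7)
The plan is to reserve two vertex-disjoint edges $M$ up front and then greedily peel off vertex-disjoint $2$-in-stars from the remaining vertex set until the process gets stuck; the hypotheses $\delta^+(G)\ge d$ and $\Delta^-(G)\le D$ will then force the number of extracted stars to meet the claimed lower bound.

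To secure the two independent edges, note that $\delta^+(G)\ge 1$ gives at least $n$ edges while $\Delta^-(G)\le D$ is a strong cap on any single vertex. If no two edges of $G$ were vertex-disjoint, then for $n$ large compared to $D$ a short case analysis (ruling out a triangle via the edge count $n>6$) forces all edges to share a single vertex $v$; but then each of the other $n-1$ vertices would need its $\delta^+$-guaranteed out-edge to end at $v$, giving $\deg^-(v)\ge n-1>D$, a contradiction. Pick any such pair of disjoint edges and call their vertex set $V(M)$, so $|V(M)|=4$.

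For the greedy step, set $R_0:=V(G)\setminus V(M)$ and, while some $v\in R_i$ has at least two in-neighbors in $R_i$, form a $2$-in-star centered at $v$ using two such in-neighbors and delete its three vertices to get $R_{i+1}$. Let $k$ be the number of stars produced when no further $v$ can be found, and set $R:=R_k$, so $|R|=n-4-3k$. By the stopping rule, each vertex of $R$ has in-degree at most one inside $R$, so at most $|R|=n-4-3k$ edges of $G$ lie entirely in $R$; and since each of the $3k+4$ vertices of $V(G)\setminus R$ is the head of at most $D$ edges of $G$, at most $D(3k+4)$ edges of $G$ have their tail in $R$ and head outside $R$. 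On the other hand $\delta^+(G)\ge d$ gives at least $d(n-4-3k)$ edges with tail in $R$, so
\[
d(n-4-3k)\ \le\ (n-4-3k)+D(3k+4),
\]
which rearranges to $(d-1)n\le (d+D-1)(3k+4)$, and hence to
\[
k\ \ge\ \frac{(d-1)n-4(d+D-1)}{3(d+D-1)},
\]
exactly the required bound.

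There is no serious obstacle here; the only subtlety is that the $4$ vertices of $M$ must be included in the $3k+4$ term bounding the edges that leave $R$, which is why $M$ and the stars are built within a common budget rather than in two separate steps.
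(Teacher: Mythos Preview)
Your proof is correct and follows essentially the same approach as the paper: fix two independent edges, take a maximal (or greedily built) family of disjoint $2$-in-stars in the complement, and then compare the at least $d|R|$ out-edges from the leftover set $R$ against the at most $|R|$ edges inside $R$ (from the ``no further $2$-in-star'' condition) plus the at most $D(3k+4)$ edges landing outside $R$. The paper phrases the extremal step as choosing a maximum collection rather than greedy extraction, but the resulting inequality and computation are identical.
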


\begin{proof}
We start by noting that since $\delta^+(G)\geq d\geq 1$ and $\Delta^-(G)\leq D$ there is a matching of size at least $2$.  Let $M$ be a maximum collection of two independent edges together with $m\geq 0$ vertex disjoint $2$-in stars and let $L=V(G)\setminus V(M)$.  Note that $\sum_{v\in L}\deg^+(v,L)\leq |L|=n-3m-4$ otherwise $\sum_{v\in L}\deg^-(v,L)=\sum_{v\in L}\deg^+(v,L)>|L|$ would give a $2$-in star disjoint from $M$.  Thus 
\begin{align*}
(d-1)(n-3m-4)\leq d(n-3m-4)-\sum_{v\in L}\deg^+(v, L)\leq \vec{e}(L,M)\leq (3m+4)D
\end{align*}
which gives $m\geq \frac{(d-1)n-4(d-1+D)}{3(d+D-1)}$.  
\end{proof}

\subsection{Finding the ADHC}

Looking ahead (in what will be the main case), we are going to distribute
vertices from $Z$ to the sets $X_1', X_2', Y_1', Y_2'$ to make sets $X_1, X_2,
Y_1, Y_2$.  Then we are going to partition each of the sets $X_1=X_1^1\cup
X_1^2$, $X_2=X_2^1\cup X_2^2$, $Y_1=Y_1^1\cup Y_1^2$, and $Y_2=Y_2^1\cup
Y_2^2$ (so that each set is approximately split in half). Then we are going to
look at the bipartite graphs induced by edges from $X_2^1\cup Y_1^1$ to $X_1^1\cup Y_1^2$ and from $X_1^2\cup Y_2^2$ to $X_2^2\cup Y_2^1$ respectively (see Figure \ref{partition}).  By the degree conditions for $X_1', X_2', Y_1', Y_2'$, these bipartite graphs will be nearly complete, however we must be sure that the vertices from $Z$ each have degree at least $\gamma n$ in the bipartite graph.  This next claim shows that the vertices of $Z$ can be distributed so that this condition is satisfied.  

\begin{figure}[ht]
\centering
\subfloat[]{
\scalebox{.7}{\input{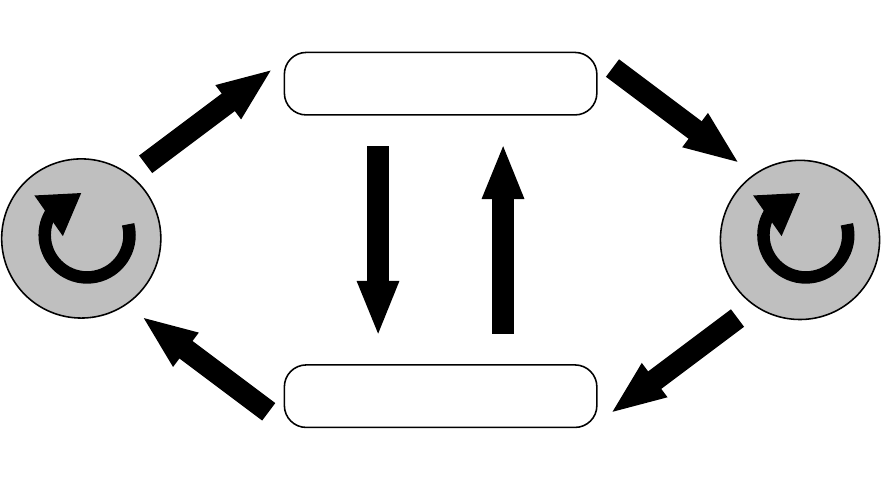_t}}
\label{before}
}~~~~~~~~~
\subfloat[]{
\scalebox{.72}{\input{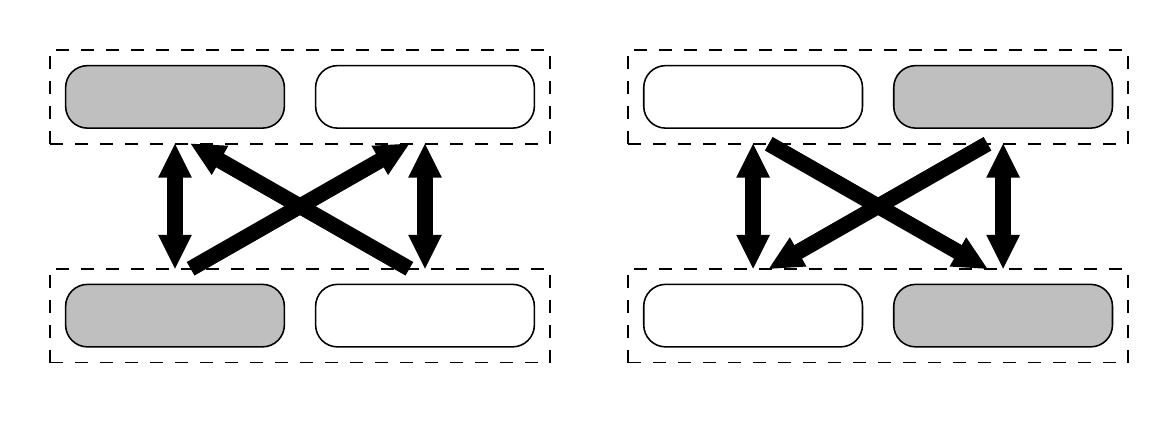_t}}
\label{after}
}
\caption[]{The objective partition, before and after.}
\label{partition}
\end{figure}

\begin{definition}
For $z\in Z$ and $A,B\in \{X_1', X_2', Y_1', Y_2'\}$, we say $z\in Z(A,B)$ if
$\deg^+(z, B)\geq 5 \gamma n$ and $\deg^-(z, A)\geq 5 \gamma n$.
\end{definition}

\begin{claim}
  \label{clm:distrib}
Every vertex in $Z$ belongs to at least one of the following sets:
\begin{enumerate}
\item $Z(X_i', X_i')$,

\item $Z(Y_i', Y_i')$,

\item $Z(X_i', X_{3-i}')$,

\item $Z(Y_i', Y_{3-i}')$,

\item $Z_1:=\displaystyle \bigcap_{1 \le i,j \le 2} Z(Y_i', X_j')$ or
\item $Z_2:=\displaystyle \bigcap_{1 \le i,j \le 2} Z(X_i', Y_j')$.

\end{enumerate}

\end{claim}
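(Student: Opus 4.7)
The plan is to fix $z\in Z$ and show that if $z$ lies in none of the first four sets of the claim, then $z\in Z_1\cup Z_2$. The only degree-theoretic input is that $\delta^0(D)\ge n$ together with $|Z|\le 3\alpha^{2/3}n$ gives
\[
  \deg^{\pm}(z, X_1') + \deg^{\pm}(z, X_2') + \deg^{\pm}(z, Y_1') + \deg^{\pm}(z, Y_2') \;\ge\; n - 3\alpha^{2/3}n
\]
for both choices of sign, because the remaining at most $3\alpha^{2/3}n$ out- (resp.\ in-) edges lie inside $Z$.

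The key reduction is the following dichotomy. The assumption $z\notin Z(X_i', X_j')$ for any $(i,j)\in\{1,2\}^2$ (which covers sets $1$ and $3$) is equivalent to saying that no ordered pair $(i,j)$ satisfies both $\deg^-(z, X_i')\ge 5\gamma n$ and $\deg^+(z, X_j')\ge 5\gamma n$; logically this forces either $\deg^-(z, X_i')<5\gamma n$ for both $i$, or $\deg^+(z, X_j')<5\gamma n$ for both $j$. An analogous dichotomy holds for $Y$. Cross-combining these with the totals above rules out the two diagonal combinations in which all four in-degrees (resp.\ out-degrees) to $V\setminus Z$ would be $<5\gamma n$, leaving two symmetric cases: Case A, where $\deg^-(z, X_i')<5\gamma n$ and $\deg^+(z, Y_i')<5\gamma n$ for both $i$; and Case B, where $\deg^+(z, X_i')<5\gamma n$ and $\deg^-(z, Y_i')<5\gamma n$ for both $i$.

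In Case A, the displayed inequality forces $\deg^+(z, X_1')+\deg^+(z, X_2')$ and $\deg^-(z, Y_1')+\deg^-(z, Y_2')$ each to be $\ge n - O(\gamma n)$, so $|X_1'|+|X_2'|$ and $|Y_1'|+|Y_2'|$ are both $\ge n - O(\gamma n)$. Combined with the total $|X_1'|+|X_2'|+|Y_1'|+|Y_2'|\le 2n$ and the balance bounds $||X_1'|-|X_2'||,||Y_1'|-|Y_2'||\le 3\alpha^{2/3}n$ from Proposition~\ref{prop:preprocess}, every individual $|X_i'|$ and $|Y_i'|$ then lies in $n/2\pm O(\gamma n)$. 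The punchline: if $\deg^+(z, X_j')<5\gamma n$ for some $j$, the other $X$-set would have to absorb $\ge n-O(\gamma n)$ out-neighbors of $z$, which exceeds its $\le n/2+O(\gamma n)$ cardinality---a contradiction once $\gamma\ll 1$. Hence $\deg^+(z, X_j')\ge 5\gamma n$ for both $j$, and by the identical argument $\deg^-(z, Y_i')\ge 5\gamma n$ for both $i$, yielding $z\in Z_1$. Case B is symmetric and yields $z\in Z_2$. The only obstacle is parameter bookkeeping: one must ensure that the $O(\gamma n)$ slack from the preprocessing and the degree-deficit to $Z$ never swallows the $5\gamma n$ threshold, which is handled comfortably by the hierarchy $\alpha\ll\gamma\ll 1$.
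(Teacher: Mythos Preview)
Your proof is correct and follows essentially the same logic as the paper's. The paper organizes the argument slightly differently---it first observes that $z$ has at least $(n-|Z|)/4$ out-neighbors in some $A\in\{X_1',X_2',Y_1',Y_2'\}$ and branches on whether $A$ is an $X$-set or a $Y$-set---but this amounts to exactly your Case A/Case B dichotomy, and the subsequent size-counting to force each $|X_i'|,|Y_i'|\approx n/2$ and hence $z\in Z_1$ (resp.\ $Z_2$) is identical.
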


\begin{proof}
Let $v\in Z$ and suppose that $v$ is in none of the sets $(i)-(iv)$.  Note that $v$ must have at least $(n-|Z|)/4$ out-neighbors in some set $A\in \{X_1',X_2',Y_1',Y_2'\}$.  

Assume $A=X_i'$ for some $i=1,2$.  
Because of the degree condition and the fact that
$v$ is in none of the sets $(i)-(iv)$, we have 
\begin{align*}
  \deg^-(v, Y_1\cup Y_2)&\geq n-10\gamma n-|Z|\geq (1-11\gamma)n,\text{ which implies} \\
  \deg^+(v, X_1\cup X_2)&\geq n-10\gamma n-|Z|\geq (1-11\gamma)n.
\end{align*}
This implies, $||X_1 \cup X_2|-n|,||Y_1 \cup Y_2|-n|\leq  11 \gamma n$.
With Proposition~\ref{prop:preprocess}, we have that 
$(1/2-6\gamma)n\leq |X_1|,|X_2|,|Y_1|,|Y_2| \leq (1/2 + 6\gamma)n$ so $v \in Z_1$.

If $A=Y_i'$ for some $i=1,2$,
the previous argument
(with the symbol $X$ exchanged with the symbol $Y$) gives us that $v \in Z_2$.
\end{proof}

Since a vertex may be in multiple sets $(i)-(vi)$, we assign each vertex to the first set it is a member of in the ordering $(i)-(vi)$. 
Now we distribute vertices from $Z$.

\begin{procedure}(Distributing the vertices from $Z$)
  \label{proc:distrib}
  For $1 \le i \le 2$, set
\begin{itemize}
  \item $X_i:=X_i'\cup Z(X_{3-i}', X_{3-i}')\cup Z(Y_i', Y_{3-i}')$ and
  \item $Y_i:=Y_i'\cup Z(Y_i',Y_i')\cup Z(X_{3-i}', X_i')\cup Z_i$. 
\end{itemize}
\end{procedure}

By Claim~\ref{clm:distrib}, $\{X_1, X_2, Y_1, Y_2\}$ is a partition
of $V$. (We allow empty sets in our partitions).  Note that the vertices from $Z_1\cup Z_2$ have no obvious place to be distributed, thus our choice is arbitrary.

Call a partition of a set into two parts \emph{nearly balanced} if the
sizes of the two part differ by at most $2\beta n$.
Call a partition $\bigcup_{1 \le i,j \le 2} \{X_i^j, Y_i^j\}$ of $V$ a 
\emph{splitting} of $D$ if 
$\{X_i^1, X_i^2\}$ is a nearly balanced partition of $X_i$
and $\{Y_i^1, Y_i^2\}$ is a nearly balanced partition of $Y_i$.
Define $U_i:=X_{3-i}^i\cup Y_{i}^i$ and $V_i:=X_{i}^i\cup Y_{i}^{3-i}$ (see Figure \ref{partition}).
Note that, with Proposition~\ref{prop:preprocess},
$||A|- n/2|\leq 3\beta n$ for any $A \in \{U_1, U_2, V_1, V_2\}$.
Furthermore, if $u \in U_i \setminus Z$, by Proposition~\ref{prop:preprocess},
$\deg^+(u, X_{i}' \cup Y_{i}') \ge |X_{i}' \cup Y_{i}'| - 4\alpha^{1/3}$,
so
\begin{equation}
  \label{eq:out_U_V}
  \deg^+(u, V_i) \ge |V_i| - 4\alpha^{1/3} - |Z| \ge |V_i| - 2\beta n.
\end{equation}
Similarly, if $v \in V_i \setminus Z$, then
\begin{equation}
  \label{eq:in_V_U}
  \deg^-(v, U_i) \ge |U_i| - 2 \beta n.
\end{equation}

Let $G$ be the bipartite graph on vertex sets $U_1 \cup U_2, V_1 \cup V_2$ 
such that $\{u,v\}\in E(G)$ if and only if $u\in U_1 \cup U_2, v\in V_1 \cup V_2$, 
and $(u,v)\in E(D)$. Let $G_i:=G[U_i, V_i]$  
and $Q_i = \{v \in V(G_i): \deg_G(v) < (1-\gamma)n/2\}$.
Call a splitting \emph{good} if $\delta(G_i) \ge \gamma n$ and 
$|Q_i| \le \beta n$ for $i \in 1,2$.
If $x \in X_i$ is mapped to some $X_i^j$ 
we say that $x$ is \emph{preassigned} to $X_i^j$.
Similarly, if $y \in Y_i$ is mapped to some $Y_i^j$ 
we say that $y$ is \emph{preassigned} to $Y_i^j$.

\begin{claim}\label{randomsplit}
  If $P$ is a set of preassigned vertices such that $|P| \le \beta n$
  and for all $1 \le i, j \le 2$,
  $x_i^j$ and $y_i^j$ are non-negative integers such that:
  \begin{enumerate}
    \item
      $x_i^j$ and $y_i^j$ are at least as large as the number
      of vertices preassigned to $X_i^j$ and $Y_i^j$ respectively;
    \item
      $x_i^1 + x_i^2 = |X_i|$ and $y_i^1 + y_i^2 = |Y_i|$; and 
    \item
      $||X_i|/2-x_i^j|, ||Y_i|/2-y_i^j| \leq \beta n$
  \end{enumerate}
  then there exists a good splitting of $V$
  such that $|X_i^j| = x_i^j$ and $|Y_i^j| = y_i^j$
  and every vertex in $P$ is in its preassigned set.
\end{claim}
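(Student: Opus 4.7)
The plan is a probabilistic existence argument. For each $A \in \{X_1, X_2, Y_1, Y_2\}$, independently and uniformly at random partition $A \setminus P$ into two parts of the sizes needed to achieve $|X_i^j| = x_i^j$ and $|Y_i^j| = y_i^j$ (these sizes are attainable by hypotheses (i) and (ii), and the distance between them is well-defined since all are integers). Combined with the forced placement of vertices in $P$, this gives a random splitting meeting the size constraints and respecting the preassignments; the claim is that with positive probability the resulting splitting is good.

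The condition $|Q_i| \leq \beta n$ will hold deterministically. By Proposition~\ref{prop:preprocess} together with~\eqref{eq:out_U_V}--\eqref{eq:in_V_U}, every non-$Z$ vertex $u \in U_i$ satisfies $\deg_G(u) \geq \deg^+(u, V_i) \geq |V_i| - 2\beta n \geq n/2 - 5\beta n > (1-\gamma)n/2$ since $\beta \ll \gamma$, and the analogous bound holds for non-$Z$ vertices of $V_i$. Hence $Q_i \subseteq Z$, giving $|Q_i| \leq |Z| \leq 3\alpha^{2/3}n \leq \beta n$ since $\alpha \ll \beta$. The same estimate also shows that every non-$Z$ vertex of $V(G_i)$ already has degree at least $\gamma n$ in $G_i$, so for the condition $\delta(G_i) \geq \gamma n$ only the vertices $z \in Z$ need to be checked.

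Fix $z \in Z$. By Procedure~\ref{proc:distrib} and Claim~\ref{clm:distrib}, $z$ belongs to one of the sets $Z(A', B')$ (or to $Z_1$ or $Z_2$), which by definition guarantees $\deg^-(z, A') \geq 5\gamma n$ and $\deg^+(z, B') \geq 5\gamma n$, and $z$ is placed in a specific $A \in \{X_1, X_2, Y_1, Y_2\}$. After its random split $z$ lies in some $U_i$ or $V_j$; a case-by-case check, comparing the six placement rules of Procedure~\ref{proc:distrib} with the definitions $U_i = X_{3-i}^i \cup Y_i^i$ and $V_i = X_i^i \cup Y_i^{3-i}$, shows that the required degree $\deg^+(z, V_i)$ (if $z \in U_i$) or $\deg^-(z, U_j)$ (if $z \in V_j$) counts at least $z$'s neighbors in a random $(1/2 \pm O(\beta))$-fraction of either $A'$ or $B'$. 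At most $|P| \leq \beta n \ll \gamma n$ of these neighbors are preassigned, so the expected count in the random half is at least $(2 - O(\beta))\gamma n$, and Lemma~\ref{lem:degree-chernoff} (Chernoff for the hypergeometric distribution) gives failure probability at most $e^{-c\gamma n}$ for some absolute $c > 0$.

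A union bound over $|Z| \leq 3\alpha^{2/3}n$ vertices shows that for sufficiently large $n$ the total failure probability is $o(1)$, so a good splitting exists. The only nontrivial step is the case analysis matching the six placement rules of Procedure~\ref{proc:distrib} to the correct pair of random halves appearing in $U_i$ or $V_j$; this is mechanical but needs to be carried out systematically to verify that the $5\gamma n$ guarantee from the definition of $Z(A', B')$ lands on the correct side after each random split.
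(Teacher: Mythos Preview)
Your proposal is correct and follows essentially the same approach as the paper: random partitioning of each $X_i\setminus P$ and $Y_i\setminus P$ into parts of the prescribed sizes, with the non-$Z$ vertices handled deterministically via \eqref{eq:out_U_V}--\eqref{eq:in_V_U} and the $Z$ vertices handled via a Chernoff/hypergeometric bound against the $5\gamma n$ degree guarantee from Claim~\ref{clm:distrib} and Procedure~\ref{proc:distrib}. The paper packages the concentration step slightly differently, applying Lemma~\ref{lem:degree-chernoff} once to control $|N^{\pm}(v)\cap X_i^j|$ and $|N^{\pm}(v)\cap Y_i^j|$ simultaneously for \emph{all} $v$ (and noting the bound is only needed when $|X_i|,|Y_i|\ge 5\gamma n$), whereas you do an explicit union bound over $z\in Z$; this is a cosmetic difference.
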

\begin{proof}
  We can split $X_i\setminus P$ and $Y_i \setminus P$ so
  that, after adding every vertex in $P$ to its preassigned set,
  $|X_i^j| = x_i^j$ and $|Y_i^j| = y_i^j$.
  When $|X_i| \ge 5 \gamma n$,
  by Lemma~\ref{lem:degree-chernoff},
  we can also ensure that for every $v \in V$,
  \begin{align*}
    |N^{\pm}(v) \cap X_i^j| \ge |N^{\pm}(v) \cap (X_i \setminus P)|\frac{x_i^j - |P|}{|X_i \setminus P|} - \alpha n 
    &\ge \left(|N^{\pm}(v) \cap X_i| - \beta n\right)(1/2 - 2\beta n/|X_i|) - \alpha n\\ 
    &\ge |N^{\pm}(v) \cap X_i|/2 - \gamma n,
  \end{align*}
since $2\beta / 5\gamma \ll \gamma$. 
  By a similar calculation, if $|Y_i| \ge 5 \gamma n$
  we can partition $Y_i$ so that
  $|N^{\pm}(v) \cap Y_i^j| \ge |N^{\pm}(v) \cap Y_i|/2 - \gamma n$ 
  for every $v \in V$.

  Let $v \in V(G_i)$ for some $i \in \{1,2\}$.
  If $v \in Z$, by the previous calculation, Claim~\ref{clm:distrib} 
  and Procedure~\ref{proc:distrib}, $d_{G_i}(v) \ge \gamma n$.
  If $v \notin Z$, by \ref{eq:out_U_V} and \ref{eq:in_V_U}, 
  $d_{G_i}(v) \ge (1 - \gamma)n/2$.
  Therefore, $\delta(G_i) \ge \gamma n$ and $|Q_i| \le \beta n$.
\end{proof}

\begin{proposition}\label{reduce}
  If there exists a good splitting of $D$ and two independent edges
  $uv$ and $u'v'$ such that either
\begin{enumerate}
  \item $u \in U_1$, $v\in V_2$, 
    $u'\in U_2$, $v'\in V_1$ and $|U_i|=|V_{3-i}|$ for $i=1,2$; or
  \item there exists $i=1,2$ such that 
    $u,u'\in U_i$, $v,v'\in V_{3-i}$, 
    $|U_i|=|V_i|+1$ and $|V_{3-i}|=|U_{3-i}|+1$
\end{enumerate}
then $D$ contains an ADHC.  
\end{proposition}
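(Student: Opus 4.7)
The key observation is that since every edge of $G$ is oriented from its $U$-endpoint to its $V$-endpoint in $D$, any Hamiltonian cycle of the undirected bipartite graph $H := G_1 \cup G_2 \cup \{uv, u'v'\}$ (with sides $U_1 \cup U_2$ and $V_1 \cup V_2$) corresponds to an anti-directed Hamiltonian cycle of $D$ whose sources are $U_1 \cup U_2$ and whose sinks are $V_1 \cup V_2$. The plan is therefore to build such a Hamiltonian cycle in $H$, using the two given bridge edges $uv$ and $u'v'$.

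Since $uv$ and $u'v'$ are the only edges of $H$ joining $V(G_1)$ to $V(G_2)$, any Hamiltonian cycle of $H$ must contain both of them. In case \emph{(i)}, this forces the cycle to have the shape
\[
u \to v \;\xrightarrow{P_2}\; u' \to v' \;\xrightarrow{P_1}\; u,
\]
where $P_2$ is a Hamiltonian path of $G_2$ from $v\in V_2$ to $u'\in U_2$ and $P_1$ is a Hamiltonian path of $G_1$ from $v'\in V_1$ to $u\in U_1$. Both paths have endpoints on opposite sides of their respective bipartitions, which matches the ``even order'' regime of Corollary \ref{cor:mm}; for this to be size-compatible one needs $|U_i|=|V_i|$ for $i=1,2$, which together with the stated $|U_1|=|V_2|$ and $|U_2|=|V_1|$ forces all four of $|U_1|, |V_1|, |U_2|, |V_2|$ to be equal (a balance that is to be arranged by the splitting via Claim \ref{randomsplit} when this proposition is applied).

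In case \emph{(ii)}, assume $i=1$ so that both cross edges go from $U_1$ to $V_2$. The cycle now takes the form
\[
u \to v \;\xrightarrow{P_2}\; v' \leftarrow u' \;\xrightarrow{P_1}\; u,
\]
where $P_2$ is a Hamiltonian path of $G_2$ from $v$ to $v'$ (both in $V_2$) and $P_1$ is a Hamiltonian path of $G_1$ from $u'$ to $u$ (both in $U_1$). The hypotheses $|V_2|=|U_2|+1$ and $|U_1|=|V_1|+1$ put each pair of endpoints on the larger side of the relevant bipartition, exactly matching the ``odd order'' regime of Corollary \ref{cor:mm}.

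To produce $P_1$ and $P_2$, apply Corollary \ref{cor:mm} inside each $G_i$ with $C:=\beta n$: the good splitting supplies $\delta(G_i)\ge \gamma n > 2C$ (since $\beta \ll \gamma$), and at most $|Q_i|\le \beta n = C$ vertices of $G_i$ have degree below $(1-\gamma)n/2 > \tfrac{2}{5}|G_i|$ (using $|G_i|\approx n$ and $\gamma \ll 1/5$). Splicing $P_1$ and $P_2$ together with the two bridge edges gives a Hamiltonian cycle of $H$ and hence an ADHC of $D$. The main technical point to watch is that the designated endpoints $u,v,u',v'$ themselves might lie in $Q_i$, but the corollary permits up to $C$ low-degree vertices (which may include the endpoints), so this is absorbed by the slack for $n$ large enough.
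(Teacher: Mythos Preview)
Your proof is correct and follows the same approach as the paper's: apply Corollary~\ref{cor:mm} in each $G_i$ to obtain Hamiltonian paths with the prescribed endpoints, then splice them with the two bridge edges to form an ADHC. Your observation that case~(i) tacitly requires $|U_i|=|V_i|$ (not merely $|U_i|=|V_{3-i}|$) is well spotted---this extra balance is indeed arranged via Claim~\ref{randomsplit} at every point the proposition is invoked, though it is omitted from the statement.
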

\begin{proof}
Apply Corollary~\ref{cor:mm} to get a Hamiltonian path $P_i$ in $G_i$ so
that the ends of $P_1$ and $P_2$ are the vertices $\{u,u', v, v'\}$.
These paths and the edges $uv$ and $u'v'$ correspond to an ADHC in $D$.
\end{proof}

Note that the edges $uv$ and $u'v'$ played a special role in the previous proposition.  Now we discuss what properties these edges must have and how we can find them (this will be the bottleneck of the proof in each case and is the only place where the exact degree condition will be needed).

\begin{definition}
  Let $uv$ be an edge in $D$.  We call $uv$ a \emph{connecting edge} if
  for some $i = 1,2$,
  $u\in X_i$ and either $v\in X_i$ or $v\in Y_i$; or
$u\in Y_i$ and either $v\in Y_{3-i}$ or $v\in X_{3-i}$.

\end{definition}

Basically, connecting edges are edges which do not behave like edges in the graph shown in Figure \ref{before}.

The following simple inequalities are used to help find connecting
edges and follow directly from the degree condition.
For any $A \subseteq V$ and $v \in A$
\begin{align}
  \label{deginside}
  \deg^0(v, A) &\geq n-|\overline{A}| \\
  \label{degoutside}
  \deg^0(v, \overline{A}) &\geq n-(|A|-1)=n + 1-|A|.
\end{align}

At this point, we split the proof into two main cases depending on the order
of the sets $Y_1$ and $Y_2$. 

\noindent
\textbf{Case 1:} $\min\{|Y_1|, |Y_2|\} > \beta n$

Without loss of generality, suppose $|X_1 \cup Y_1| \ge |X_2 \cup Y_2|$.

If $|X_1 \cup Y_1| > n$ and $|X_1|\leq 2$, then let $X_1''\subseteq \{v\in X_1: \deg^-(v, Y_2\cup X_1'')\geq 5\gamma n\}$ be as large as possible subject to $|X_1''|\leq |X_1\cup Y_1|-n$.  Reset $X_1:=X_1\setminus X_1''$ and $Y_2:=Y_2\cup X_1''$.  If $|X_1\cup Y_1|=n$ and $|X_1|=1$, say $X_1=\{v_1\}$, then if $\deg^-(v_1, Y_{2})\geq 5\gamma n$ and there exists $v_{2}\in X_{2}$ such that $\deg^-(v_{2}, Y_1)\geq 5\gamma n$, then we reset $X_i:=X_i\setminus\{v_i\}$ and $Y_i:=Y_i\cup \{v_{3-i}\}$ for $i=1,2$.

It is easy to check that 
the conclusions of Claim~\ref{randomsplit} still hold with the possibly
redefined sets $\{X_1, X_2, Y_1, Y_2\}$.  Furthermore, after these modifications, we still
have that $|X_1 \cup Y_1| \ge |X_2 \cup Y_2|$ and the following
two conditions are satisfied:
\begin{align}
\label{eq:mod_case_1_2}
  &\text{If $|X_1| = 1$, then there exists $i\in [2]$ such that for all $v\in X_i$, $\deg^-(v, Y_{3-i}) < 5 \gamma n$.}\\
  \label{eq:mod_case_1_1}
  &\text{If $|X_1 \cup Y_1| > n$ and $|X_1| \le 2$, then 
  for every $v \in X_1$, $\deg^-(v, Y_2) < 5 \gamma n$.} 
\end{align}

\begin{claim}\label{Case2edges}
For each $i=1,2$, there exists a partition of 
$X_i$ as $\{X_i^1,X_i^2\}$ with $||X_i^1| - |X_i^2|| \le \alpha n$
and $W_i:=Y_i\cup X_1^i\cup X_2^i$ such that
either
\begin{enumerate}
\item $|W_1|$, $|W_2|$ are odd and there are two independent connecting edges
  directed from $W_j$ to $W_{3-j}$ for some $j=1,2$; or
\item $|W_1|,|W_2|$ are even and there are two independent connecting edges, one directed from $W_1$ to $W_2$ and the other directed from $W_2$ to $W_1$.
\end{enumerate}
\end{claim}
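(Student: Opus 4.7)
The plan is to: (1) show that connecting edges are abundant in $D$; (2) extract two independent connecting edges whose directional pattern between $W_1$ and $W_2$ matches one of the two stated alternatives; and (3) adjust the split $\{X_i^1, X_i^2\}$ so that both the directional and the parity conditions of the chosen case are simultaneously realized.

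First I would count connecting edges. Since $\delta^0(D) \ge n$, the total out-degree in $D$ is at least $2n^2$. On the other hand, a direct count over the partition $\{X_1, X_2, Y_1, Y_2\}$ shows that the number of ``standard'' (i.e., $F_{n,k}$-template) ordered edges is at most $2n^2 - (|Y_1| + |Y_2|)$ in the balanced case, with only an $O(\alpha^{2/3}n^2)$ correction for the imbalance guaranteed by Proposition \ref{prop:preprocess}. Since $|Y_1|, |Y_2| > \beta n$ in Case~1, at least $c\beta n$ edges of $D$ must be connecting, where a connecting edge has one of the types within-$X_i$, $X_i \to Y_i$, $Y_i \to Y_{3-i}$, or $Y_i \to X_{3-i}$.

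Second, I would catalog the routing behavior of each connecting edge relative to an arbitrary split $X_i = X_i^1 \cup X_i^2$. Edges $Y_1 \to Y_2$ always cross $W_1 \to W_2$, and $Y_2 \to Y_1$ always cross $W_2 \to W_1$. A within-$X_i$ edge can be routed in either direction by placing its tail and head on opposite sides of the split. An $X_i \to Y_i$ edge crosses only as $W_{3-i} \to W_i$, and a $Y_i \to X_{3-i}$ edge crosses only as $W_i \to W_{3-i}$, when the corresponding $X$-endpoint is moved to the non-$Y$ side. Partitioning the abundant connecting edges by type, Lemma \ref{2stars} (applied to a dominant type viewed as a digraph with suitable degree bounds) together with a short greedy matching yields two independent connecting edges whose routings can be chosen either in the same direction (targeting case (i)) or one in each direction (targeting case (ii)).

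Third, I would realize the parity requirement. We have $|W_1| \equiv |Y_1| + |X_1^1| + |X_2^1| \pmod{2}$, and within the $\alpha n$ slack of Claim \ref{randomsplit} I can shift this parity by $\pm 1$ without disturbing the preassignment of the endpoints of the two chosen edges. Hence whichever parity matches the directional pattern (odd for (i), even for (ii)) is attainable, and Claim \ref{randomsplit} then supplies a good splitting realizing all constraints.

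The main obstacle will be the degenerate regime where $|X_1| \le 2$, since then the split of $X_1$ is essentially forced and the routing flexibility of within-$X_1$ and $X_1 \to Y_1$ edges collapses. This regime is precisely what the preprocessing conditions \eqref{eq:mod_case_1_1} and \eqref{eq:mod_case_1_2} are designed to handle: they cap $\deg^-(v, Y_{3-i})$ for $v \in X_i$, eliminating the configurations that would mimic the counterexamples $F_{2n}^1$ and $F_{2n}^2$ and thereby forcing enough connecting edges onto $Y_1 \cup Y_2$ so that the required independent pair can still be extracted from the rigid $Y_i \to Y_{3-i}$ and $Y_i \to X_{3-i}$ types.
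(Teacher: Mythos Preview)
Your global counting in step (1) is correct --- the template admits at most $2n^2 - (|Y_1|+|Y_2|)$ non-connecting ordered pairs, so $\delta^0(D)\ge n$ forces at least $|Y_1|+|Y_2|\ge 2\beta n$ connecting edges --- but step (2) does not follow from it. A total of $\Theta(\beta n)$ connecting edges can all be incident to a single vertex (a vertex can carry up to $4n-2$ directed edges), so no independent pair is guaranteed. Your appeal to Lemma~\ref{2stars} is unjustified: that lemma requires a \emph{minimum out-degree} hypothesis $\delta^+(G)\ge d$ on the auxiliary digraph, and nothing in your counting gives any lower bound on the out-degree of individual vertices in the connecting-edge graph. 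Without such a bound, neither Lemma~\ref{2stars} nor a greedy matching produces two independent edges.

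The paper avoids this by arguing locally rather than globally: inequality \eqref{degoutside} forces \emph{every} vertex $u\in Y_i$ to have $\deg^+(u, X_{3-i}\cup Y_{3-i})\ge 1$ (or $\ge 2$ in the unbalanced case), and any such out-edge is automatically connecting. Since $|Y_i|>\beta n$, one simply picks two distinct $u,u'$ in the appropriate $Y_i$ and reads off independent connecting edges with disjoint tails. Your step (3) parity claim is also too quick: shifting the parity of $|W_1|$ requires $|X_1|+|X_2|\ge 1$ beyond the endpoints you have already preassigned, and this fails precisely when $X_1,X_2$ are tiny. You correctly flag this as ``the main obstacle'' but then defer entirely to \eqref{eq:mod_case_1_1}--\eqref{eq:mod_case_1_2} without saying what edges you actually use; in the paper this degenerate regime is the heart of the argument and is resolved by an explicit ad-hoc case analysis (e.g.\ when $X_1=X_2=\emptyset$ one must find a connecting edge \emph{into} $Y_2$ from $Y_1$ to restore flexibility), not by the preprocessing bounds alone.
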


\begin{proof}

\noindent
\textbf{Case 1} ($|X_1\cup Y_1|=|X_2\cup Y_2|$).  For all $u\in Y_1$ and $u'\in Y_2$, we have $\deg^0(u, X_2\cup Y_2), \deg^0(u', X_1\cup Y_1)\geq 1$ by \eqref{degoutside}.  From this we get independent edges $uv$ and $u'v'$ with $v\in X_2\cup Y_2$ and $v'\in X_1\cup Y_1$.  We would be done unless $n$ is odd and $X_1\subseteq \{v'\}$ and $X_2\subseteq \{v\}$, as otherwise we could obtain the partition $W_i:=Y_i\cup X_1^i\cup X_2^i$ for $i=1,2$ with $u,v'\in W_1$ and $v,u'\in W_2$ and $|W_1|, |W_2|$ even. If there exists $u''\in Y_1$ having an outneighbor in $X_2\cup Y_2$ different from $v$, then we would be done, likewise if there exists $u''\in Y_2$ having an out-neighbor in $X_1\cup Y_1$ different than $v'$. 
If say $X_1=\emptyset$, then choosing $v''\in Y_2$ with $v''\neq v$ and using the fact that $\deg^-(v'', Y_2)\geq 1$ we can choose $u''\in Y_1$ different from $u$ (reselecting $u$ if necessary) in which case we would be done.  
So we must have that $X_1=\{v'\}$ and $X_2=\{v\}$ with $\deg^-(v, Y_1)\geq |Y_1|$ and $\deg^-(v', Y_2)\geq |Y_2|$, but this contradicts \eqref{eq:mod_case_1_2}.

\noindent
\textbf{Case 2} ($|X_1\cup Y_1|>|X_2\cup Y_2|$).  
By the case, we can choose distinct $u,u'\in Y_2$ such that $\deg^0(u, X_1\cup Y_1),\deg^0(u', X_1\cup Y_1)\geq 2$ by \eqref{degoutside}. Thus we can choose distinct 
$v\in N^+(u)\cap (X_1\cup Y_1)$ and $v'\in N^+(u')\cap (X_1\cup Y_1)$, with a preference for choosing vertices in $Y_1$.
For $i = 1,2$, let $\{X_i^1, X_i^2\}$ be a partition of 
$X_i$ such that $||X_i^1| - |X_i^2|| \le \alpha n$
and $W_i:=Y_i\cup X_1^i\cup X_2^i$ with $u,u'\in W_1$ and $v,v'\in W_2$.
If this can be done so that $|W_1|$ and $|W_2|$ are odd then we are done, 
so suppose not.
Then it must be the case that $X_2=\emptyset$ and $X_1 \subseteq \{v,v'\}$.  If $X_1\neq \emptyset$, then every vertex in $Y_2$ has an out-neighbor in $X_1$ which implies that $\deg^-(v, Y_2)\geq |Y_2|/2$ for some $v\in X_1$, contradicting \eqref{eq:mod_case_1_1}.
So suppose $X_1=\emptyset$.  Now we can finish by choosing $v''\in Y_2$ distinct from $u$ and letting $u''\in (N^-(v'')\cap Y_1)\setminus \{v\}$.  
\end{proof}

By Claim \ref{Case2edges} and Proposition \ref{prop:preprocess} for $i=1,2$ we have 
$||X_{1}^i| - |X_2^i|| \le \alpha n + 3 \alpha^{2/3} n$.
So since $|Y_i| \ge \beta n$
we can assume that after we apply Claim~\ref{randomsplit},
$||U_{i}| - |V_i|| \le 1$.

Let $uv$ and $u'v'$ be the connecting edges from Claim~\ref{Case2edges}.
Suppose Claim~\ref{Case2edges}.(i) holds and
fix $i \in \{1,2\}$ so that
$u,u' \in W_i$ and $v,v' \in W_{3-i}$.
Preassign $u, u', v$ and $v'$ so that, 
after splitting $D$ with Proposition~\ref{randomsplit},
$u, u' \in U_i$ and $v, v' \in V_{3 - i}$.
Since $|W_1|$ and $|W_2|$ are odd,
we can ensure that $|U_i| = |V_i| + 1$ and
$|V_{3-i}| = |U_{3-i}| + 1$.
We can then apply Proposition~\ref{reduce}.(ii) to find an ADHC.
Now suppose Claim~\ref{Case2edges}.(ii) holds
and let $u,v' \in W_1$, $v,u' \in W_2$
so that $uv$ and $u'v'$ are the connecting edges.
Preassign $u, u', v$ and $v'$ so that, 
after splitting $D$ with Proposition~\ref{randomsplit},
$u \in U_1$,$v \in V_2$, $u' \in U_2$ and $v' \in V_1$.
Since $|W_1|$ and $|W_2|$ are even,
we can apply Proposition~\ref{reduce}.(i) to find an ADHC.

%

\noindent
\textbf{Case 2: $\min\{|Y_1|, |Y_2|\} \le \beta n$}

Without loss of generality, suppose $|X_1|\geq |X_2|$.  If $|X_1\cup Y_1|>n$, then let $$X_1''\subseteq \{v\in X_1:\deg^-(v, X_1)\geq 5 \gamma n\}\cup \{v\in Y_1: \deg^-(v, X_1)\geq 5\gamma n\}$$ 
be as large as possible subject to $|X_1''| \le |X_1\cup Y_1|-n$. 
Reset $X_1 :=X_1 \setminus X_1''$ and $Y_1:= Y_1\setminus X_1''$ and $X_2:=X_2\cup X_1''$.  
If we still have $|X_1\cup Y_1|>n$, then  
because of how we distributed the vertices in Claim \ref{clm:distrib} and Procedure \ref{proc:distrib} together with how we reassigned the vertices of $X_1''$, we have 
\begin{equation}\label{X1Y1toX1}
\Delta^-(X_1\cup Y_1, X_1)<5\gamma n.
\end{equation}

By Proposition~\ref{prop:preprocess},
$|X_1'| \le n + 2\alpha^{2/3}$ and $|Z| \le 3 \alpha^{2/3}$,
thus $|X_1''| \le 5\alpha^{3/2} \ll \beta n$.
Therefore, 
the conclusions of Claim~\ref{randomsplit} still hold with the redefined
sets $\{X_1, X_2, Y_1, Y_2\}$.

\noindent
\textbf{Case 2.1:} $|X_1| \le n$.  If $Y_1=\emptyset$ or $Y_2=\emptyset$, say $Y_1=\emptyset$, then we can split $Y_2=Y_2^1\cup Y_2^2$ so that $|X_1\cup Y_2^1|=n=|X_2\cup Y_2^2|$.  In this case we can directly find the ADHC by only considering edges from $X_1\cup Y_2^1$ to $X_2\cup Y_2^2$. So suppose $Y_1\neq \emptyset$ and $Y_2\neq \emptyset$.

Suppose $|X_1\cup Y_1|=|X_2\cup Y_2| = n$.  We first note that two independent connecting edges $uv, u'v'$ will allow us to either preassign $u,u'$ so that $u\in U_1$ and $u'\in U_2$ and $v,v'$ so that $v \in V_1$ and $v' \in V_2$, or preassign $u,u'$ so that $u, u'\in U_i$ and $v,v'$ so that $v,v'\in V_{3-i}$ (this is possible since $Y_1,Y_2\neq \emptyset$).
In the first case 
we can apply Claim~\ref{randomsplit},
so that $|U_1| + |V_2| = |U_2| + |V_1| = n$,
$|U_1|=|V_1|$ and $|U_2|=|V_2|$; in the second case we can apply Claim \ref{randomsplit} so that $|U_{i}| + |V_{3-i}| = n+1$,
$|U_{3-i}| = |V_{3-i}| + 1$ and $|V_{i}| = |U_{i}| + 1$.
Applying Proposition~\ref{reduce}.(i) or (ii) then gives the desired ADHC.

So in this case we show that $D$ must contain two independent connecting edges (here is the only place where we make use of the fact that $D$ is not isomorphic to $F_{2n}^1$ or $F_{2n}^2$). Note that:
\begin{align}
\delta^+(Y_i, X_{3-i}\cup Y_{3-i})&\geq n-(|X_i\cup Y_i|-1)=1 ~\text{ for } i=1,2 \label{Youtdeg}
\end{align}
If there is an edge in $D[X_1]$ or an edge in $D[X_2]$; or
$|Y_1| \ge 2$ and $|Y_2| \ge 2$, then we easily obtain two independent connecting edges using \eqref{Youtdeg}.
If say $|Y_1|=1$ and $|Y_2|\geq 2$, then 
$|Y_1 \cup X_2| \le n-1$
so $\delta^-(Y_1, X_1 \cup Y_2) \ge 2$
and $\delta^-(X_1, Y_2)\geq n-|Y_1\cup X_2|\geq 1$, which
together give two independent edges.
Finally, if $|Y_1|=1=|Y_2|$, let $\{y_i\} = Y_i$ for $i = 1,2$.  If there exists $x_1\in X_1$ and $x_2\in X_2$ such that $x_ix_{3-i}$ is not an edge for some $i\in [2]$, then because of the semi-degree condition and the fact that $X_1$ and $X_2$ are independent sets, it must be that $x_iy_i$ and $x_{3-i}y_{3-i}$ are edges, giving us two independent connecting edges. 
If there exists $x_i \in X_i$ such that $y_ix_i$ is not an edge,
then, by the semi-degree condition and the fact that $X_i$ is
an independent set, $y_{3-i}x_i$ is an edge.
Also by the semi-degree condition, $y_i$ must have an out-neighbor
in $X_{3-i}$ and, with the edge $y_{3-i}x_i$, this
gives us two independent connecting edges.
If there exists $x_i \in X_i$ such that $x_iy_{3-i}$ is not an edge, then an analogous argument gives two independent connecting edges.  
So we have proved that $D$ contains a subgraph isomorphic to $F_{2n,1}$.
Since $|X_1 \cup Y_2| = |X_2 \cup Y_1| = n$, 
the semi-degree condition implies that 
every vertex in $y \in Y_1 \cup Y_2$ is incident to at least 
two connecting edges: one oriented away from $y$ and the other oriented towards $y$.
If $\{y_1, y_2\}$ is an independent set, then we clearly
have two connecting edges, so assume that $y_iy_{3-i}$ is an edge.
If $y_{3-i}y_i$ is an edge, then since $D$ is not isomorphic to 
$F_{2n}^1$, there must exist at least one more edge in $D$.  Since $F_{2n}^1$ is an edge-maximal graph without an ADHC, $D$ must contain an ADHC.
So we can assume $y_{3-i}y_i$ is not an edge, and thus 
$y_{i}$ must have an in-neighbor $x_i$ in $X_i$ and 
$y_{3-i}$ must have an out-neighbor $x_i'$ in $X_i$.  If $x_i\neq x_i'$, then we have two independent connecting edges.  If $x_i=x_i'$, then $D$ contains a subgraph isomorphic to 
$F_{2n}^2$, and as before since $D$ is not isomorphic to $F_{2n}^2$, $D$ must contain an ADHC.

 

Now suppose $|X_i\cup Y_i|>|X_{3-i}\cup Y_{3-i}|$ for some $i=1,2$.
By \eqref{deginside}, $\deg^0(u, X_{i}\cup Y_{i})\geq 1$ for all $u\in X_i\cup
Y_i$ and $\deg^+(u, X_i\cup Y_i)\geq n-(|X_{3-i}\cup Y_{3-i}|-1)\geq 2$ for all $u\in Y_{3-i}$.  Let $u\in Y_{3-i}$, let $v_1,v_2\in N^+(u)\cap (X_i\cup Y_i)$, and let $u'\in X_i\setminus \{v_1,v_2\}$. Choose distinct $v\in N^+(u)\cap (X_i\cup Y_i)$ and $v'\in N^+(u')\cap (X_i\cup Y_i)$ with a preference for choosing
$v$ and $v'$ in $X_i$ (this can be done since we chose $u'$ distinct from $v_1,v_2$).  
If $|X_i| \le n-2$, then $|X_i \cup  \{u,u',v,v'\}| \le n+1$, and
when $i=2$, $|X_2 \cup Y_2| > |X_1 \cup Y_1|$, and $|Y_1| \ge 1$ imply that $n - 2 \ge |X_1| \ge |X_i|$.  So suppose $i=1$ and $n-1\leq |X_1|\leq n$.  
Note that in this case, for every $u \in X_1$, 
$\deg^+(u, X_1 \cup Y_1) \ge \max\{1, |Y_1| - 1\} \ge |Y_1|/2$, so
the bound in implies that there are two disjoint edges in $G[X_1]$.

So we can assume, in all cases, that $|X_i \cup \{u,u',v,v'\}| \le n+1$.
Therefore, after preassigning $u,u'$ to $X_i^{3-i}$ and 
$v,v'$ to $X_i^i$ or $Y_i^{3-i}$ as appropriate,
we can apply Claim~\ref{randomsplit} to get
$|U_{3-i}| + |V_{i}| = n+1$,
$|U_{3-i}| = |V_{3-i}| + 1$ and $|V_{i}| = |U_{i}| + 1$.
Applying Proposition~\ref{reduce}.(ii) then completes this case.

\bigskip
\noindent
\textbf{Case 2.2:} $|X_1|\geq n+1$.

Set $d=|X_1|-n$ and recall that $d \ll \beta n$. 
By \eqref{deginside}, $\delta^+(D[X_1]) \ge d $.
By the case, $X_1'' \cap X_1 = \emptyset$,
so $\Delta^-(D[X_1])<5\gamma n$ and
$\frac{(d-1)n-4(d-1+5\gamma n)}{3(d-1+5\gamma n)}\geq d-1$.
Applying Lemma \ref{2stars} gives two independent edges $uv$, $u'v'$ and a collection 
of $d-1$ vertex disjoint $2$-in stars $\{S_1, \dotsc, S_{d-1}\}$ in $D[X_1]$. 
Preassign the vertices in $S_1,\dotsc,S_{d-1}$ and
the vertices $u$ and $u'$ to $X_1^2$. 
Also, preassign $v$ and $v'$ to $X_1^1$.
Recall that $X_1^1 \cup X_1^2 \subseteq U_2 \cup V_1$,
so we can use Claim~\ref{randomsplit}, 
to get a good splitting of $D$ such that 
$|U_2| = \ceiling{n/2} + d$, $|V_1| = \floor{n/2}$, 
$|V_2| = \ceiling{n/2} - d + 1$ and $|U_1| = \floor{n/2} - 1$.
We then use Corollary~\ref{cor:mm}, to find a Hamiltonian path $P_1$ in $G_1$ 
with ends $v$ and $v'$.

We now move the roots of the stars $S_1, \dotsc, S_{d-1}$
from $U_2$ to $V_2$ and then use Corollary~\ref{cor:mm} to complete the
proof. More explicitly, we greedily find a matching $M$ between the leaves of the stars $S_1,\dotsc,S_{d-1}$
and the vertices in $V_2$ of degree at least $(1 - \gamma)n/2$ in $G_2$.
For each $1 \le i \le d -1$, 
let $a_i$ and $b_i$ be the vertices matched to 
the leaves of $S_i$ and
replace $V(S_i) \cup \{a_i, b_i\}$ in $G_2$
with a new vertex adjacent to $N_{G_2}(a_i) \cap N_{G_2}(b_i)$
minus the vertices of the stars.
Apply Corollary~\ref{cor:mm} to get a Hamiltonian path $P_2$ in
the resulting graph with ends $u$ and $u'$.
The stars $S_1, \dotsc, S_{d-1}$; the edges in $M$;
the paths $P_1$ and $P_2$; 
and the edges $uv$ and $u'v'$ correspond to an ADHC in $D$.

%
%

\section{Conclusion}\label{conclusion}

We end with the following conjecture which along with Theorem \ref{main} would provide a full generalization of Dirac's theorem to directed graphs with respect to minimum semi-degree.

\begin{conjecture}
Let $D$ be a directed graph on $n$ vertices and let $\vec{C}$ be an orientation of a cycle on $n$ vertices which is not anti-directed.  If $\delta^0(D)\geq \frac{n}{2}$, then $\vec{C}\subseteq D$.
\end{conjecture}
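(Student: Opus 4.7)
The plan is to mirror the stability architecture used in this paper for the anti-directed case. Fix any orientation $\vec{C}$ of an $n$-cycle that is not anti-directed, and let $D$ be a directed graph on $n$ vertices with $\delta^0(D) \geq n/2$. A non-anti-directed $\vec{C}$ must have at least one \emph{consistent vertex}, i.e.\ a vertex whose two incident arcs form a directed $P_3$; equivalently, walking around $\vec{C}$ one sees at least one ``$\to\to$'' (or ``$\leftarrow\leftarrow$'') transition. This single structural feature is what should give the gain of $+1$ over the anti-directed threshold, and it is the anchor of the whole argument. Pick $0 < \varepsilon \ll \alpha \ll 1$ and split into the case $D$ is $\alpha$-extremal and the case $D$ is not.

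For the non-extremal case, I would adapt the absorbing/covering machinery of Section~\ref{nonextreme}. One defines, for each pair $(x,y)$, an orientation-dependent notion of an $(x,y)$-absorber and an $(x,y)$-connector tailored to $\vec{C}$ (the definitions in this paper use that $\vec{C}$ is anti-directed, but an analogous gadget exists for any fixed orientation once one locates a few canonical subpatterns of $\vec{C}$). Then Lemma~\ref{generalabsorbing} produces an absorbing proper sub-path $P^*$ of $\vec{C}$; Claim~\ref{reservoir} produces a connecting reservoir; and the covering argument of Claim~\ref{clm:covering}, which only uses that $D$ is not $\alpha$-extremal and has semi-degree at least $(1-\varepsilon)n$, extends $P^*$ to a nearly-spanning copy of $\vec{C}$, which the absorber then completes. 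The threshold can be relaxed well below $n/2$ here, so no delicate extremal-type counting is needed.

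For the extremal case, Proposition~\ref{prop:preprocess} shows that $D$ is $o(1)$-close to some $F_{n,k}$ (with possibly a few exceptional edges, as in $F_{2n}^1$ and $F_{2n}^2$). The goal is to verify directly that every $D$ close to $F_{n,k}$ with $\delta^0(D) \geq n/2$ contains $\vec{C}$. The anti-directed obstruction in $F_{n,k}$ is a near-bipartition between $X_1 \cup Y_2$ and $X_2 \cup Y_1$ together with a parity/imbalance constraint: an ADHC would have to alternate sides perfectly, which fails when $|Y_i|$ is small. A consistent vertex of $\vec{C}$ is precisely a place where the embedded cycle may stay on the \emph{same} side for two consecutive steps, absorbing exactly this imbalance; using the internal edges of $D[Y_i]$ and the arcs $Y_i \to X_i$, $X_i \to Y_{3-i}$ guaranteed by the structure, one places the consistent vertices of $\vec{C}$ at locations that correct the side-imbalance, and fills in the remaining (locally anti-directed) arcs using a ladder/Moon--Moser-type argument as in Corollary~\ref{cor:mm}. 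The two sporadic graphs $F_{2n}^1, F_{2n}^2$ are handled by direct inspection, since both contain the edges needed to realize at least one consistent vertex.

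The main obstacle is the extremal case and specifically its uniformity in $\vec{C}$: different non-anti-directed orientations have different numbers and cyclic arrangements of consistent vertices, and one must ensure that these can be routed compatibly with the precise imbalance $||X_1 \cup Y_2| - |X_2 \cup Y_1||$ and with the scarce internal edges in $Y_i$ (here one heavily uses that $\delta^0 \geq n/2$ forces $D[Y_i]$ and the relevant cross-edge densities to be nearly complete). A natural organization is a case split on whether $\vec{C}$ has a single consistent vertex (the tightest sub-case, where placement is essentially forced) or many (where flexibility is ample), together with sub-cases on the sizes of $Y_1, Y_2$ paralleling Case~1 and Case~2 of the present extremal analysis. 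I expect the single-consistent-vertex case to be the bottleneck, and to require an edge-swapping argument analogous to Claim~\ref{Case2edges} to exhibit a ``connecting edge'' that accommodates the unique consistent vertex of $\vec{C}$.
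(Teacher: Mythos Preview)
This statement is a \emph{conjecture} in the paper, not a theorem: the paper offers no proof, only the remark that ``the methods developed in this paper along with the ideas in \cite{HT} and \cite{HT2} provide an approach to this problem'' to be carried out in future work. So there is nothing to compare your proposal against; what you have written is a proof \emph{plan} for an open problem, and it should be evaluated as such.

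Your outline is broadly in the spirit of what the authors have in mind, but several steps are genuine gaps rather than routine adaptations. In the non-extremal part, the absorbers and connectors of this paper are hand-built for the specific alternating pattern of an anti-directed path (see Figure~\ref{fig:absorber}); asserting that ``an analogous gadget exists for any fixed orientation'' hides the real work. For a general $\vec{C}$ the local pattern near the absorbed pair varies along the cycle, so one needs either a family of absorbers indexed by the local pattern or a reduction (as in \cite{HT}) that normalises the orientation before absorbing---neither is supplied. The covering claim likewise uses repeatedly that long proper anti-directed paths can be found and glued; for mixed orientations one must track which orientation pattern each segment realises and ensure compatibility at every connection, which is exactly the difficulty \cite{HT} addresses with its rotation arguments.

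In the extremal part your heuristic is reasonable---a consistent vertex of $\vec{C}$ should let you absorb the side-imbalance that blocks an ADHC in $F_{n,k}$---but the argument is not uniform in $\vec{C}$ and you acknowledge this. The bottleneck you identify (a single consistent vertex) is real, and ``an edge-swapping argument analogous to Claim~\ref{Case2edges}'' is not obviously available: that claim exploits the parity and symmetry of the anti-directed target, whereas here the position of the lone consistent vertex on $\vec{C}$ interacts with which of $Y_1,Y_2$ carries the needed internal edge. In short, your plan is a credible programme matching the authors' own hint, but it is not a proof; the orientation-dependent absorber construction and the uniform extremal analysis remain to be done.
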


We believe that the methods developed in this paper along with the ideas in \cite{HT} and \cite{HT2} provide an approach to this problem.  We intend to carry out this program in a subsequent paper.

\end{document}